\numberwithin{figure}{section}
\DeclareFontFamily{OMS}{rsfs}{\skewchar\font'60}
\DeclareFontShape{OMS}{rsfs}{m}{n}{<-5>rsfs5 <5-7>rsfs7 <7->rsfs10 }{}
\DeclareSymbolFont{rsfs}{OMS}{rsfs}{m}{n}
\DeclareSymbolFontAlphabet{\scr}{rsfs}
\DeclareSymbolFontAlphabet{\scr}{rsfs}
\DeclareMathOperator{\codim}{codim}
\DeclareMathOperator{\Hom}{Hom}
\DeclareMathOperator{\Id}{Id}
\DeclareMathOperator{\Image}{Image}
\DeclareMathOperator{\reg}{reg}
\DeclareMathOperator{\sing}{sing}
\DeclareMathOperator{\Spec}{Spec}
\DeclareMathOperator{\supp}{supp}
\DeclareMathOperator{\tor}{tor}
\newcommand{\sA}{\scr{A}}
\newcommand{\sB}{\scr{B}}
\newcommand{\sF}{\scr{F}}
\newcommand{\sG}{\scr{G}}
\newcommand{\sJ}{\scr{J}}
\newcommand{\sO}{\scr{O}}
\newcommand{\sR}{\scr{R}}
\newcommand{\bA}{\mathbb{A}}
\newcommand{\bC}{\mathbb{C}}
\newcommand{\bP}{\mathbb{P}}
\newcommand{\bQ}{\mathbb{Q}}
\theoremstyle{plain}   
\newtheorem{thm}{Theorem}[section]
\newtheorem{aassumption}[thm]{Additional Assumption}
\newtheorem{consequence}[thm]{Consequence}
\newtheorem{cor}[thm]{Corollary}
\newtheorem{defn}[thm]{Definition} 
\newtheorem{lem}[thm]{Lemma}
\newtheorem{lemDef}[thm]{Lemma and Definition}
\newtheorem{lemNot}[thm]{Lemma and Notation}
\newtheorem{problem}[thm]{Problem}
\newtheorem{prop}[thm]{Proposition}
\theoremstyle{remark}
\newtheorem{asswlog}[thm]{Assumption without loss of generality}
\newtheorem{claim}[thm]{Claim}
\newtheorem{construction}[thm]{Construction}
\newtheorem{example}[thm]{Example}
\newtheorem{explanation}[thm]{Explanation}
\newtheorem{notation}[thm]{Notation}
\newtheorem{rem}[thm]{Remark}
\newtheorem{setting}[thm]{Setting}
\newtheorem{warning}[thm]{Warning}
\numberwithin{equation}{thm}
\newcommand{\into}{\hookrightarrow}
\newcommand{\wtilde}{\widetilde}
\newcommand{\what}{\widehat}
\def\clap#1{\hbox to 0pt{\hss#1\hss}}
\def\mathclap{\mathpalette\mathclapinternal}
\def\mathclapinternal#1#2{%
\clap{$\mathsurround=0pt#1{#2}$}}
\newcommand\CounterStep{\addtocounter{thm}{1}\setcounter{equation}{0}}
\def\factor#1.#2.{\left. \raise 2pt\hbox{$#1$} \right/\hskip -2pt\raise -2pt\hbox{$#2$}}
\newcommand{\PreprintAndPublication}[2]{%
  \sideremark{%
    \begin{color}{blue}Preprint\end{color}/%
    \begin{color}{green}Publication\end{color}}%
    \begin{color}{blue}#1\end{color}%
    \begin{color}{green}#2\end{color}%
    \sideremark{End of
    \begin{color}{blue}Preprint\end{color}/%
    \begin{color}{green}Publication\end{color}}}
\renewcommand{\PreprintAndPublication}[2]{#1}
\newcommand{\dK}{\ensuremath{d_\text{\rm Kähler}}}
\newcommand{\kahler}{\ensuremath{\text{\rm Kähler}}}
\newcommand{\dnoTor}{\ensuremath{d_\text{\rm tfree}}}
\newcommand{\drefl}{\ensuremath{d_\text{\rm refl}}}
\newcommand{\dtor}{\ensuremath{d_\text{\rm tor}}}
\DeclareMathOperator{\restr}{restriction}
\title{Pull-back morphisms for reflexive differential forms}
\date{\today}
\author{Stefan Kebekus}
\thanks{The author gratefully acknowledges partial support by the
  DFG-Forschergruppe 790 ``Classification of Algebraic Surfaces and Compact
  Complex Manifolds''.}
\address{Stefan Kebekus, Mathematisches Institut, Albert-Ludwigs-Universität
  Freiburg, Eckerstraße 1, 79104 Freiburg im Breisgau, Germany}
\email{\href{mailto:stefan.kebekus@math.uni-freiburg.de}{stefan.kebekus@math.uni-freiburg.de}}
\urladdr{\href{http://home.mathematik.uni-freiburg.de/kebekus}{http://home.mathematik.uni-freiburg.de/kebekus}}
\keywords{Differential Form, Minimal Model Program, KLT Singularities}
\subjclass[2010]{14J17, 14B05}
\definecolor{linkred}{rgb}{0.7,0.2,0.2}
\definecolor{linkblue}{rgb}{0,0.2,0.6}
\begin{document}

\begin{abstract}
  Let $f : X → Y$ be a morphism between normal complex varieties, where $Y$ is
  Kawamata log terminal.  Given any differential form $σ$, defined on the smooth
  locus of $Y$, we construct a ``pull-back form'' on $X$.  The pull-back map
  obtained by this construction is $\sO_Y$-linear, uniquely determined by
  natural universal properties and exists even in cases where the image of $f$
  is entirely contained in the singular locus of $Y$.

  One relevant setting covered by the construction is that where $f$ is the
  inclusion (or normalisation) of the singular locus $Y_{\sing}$.  As an
  immediate corollary, we show that differential forms defined on the smooth
  locus of $Y$ induce forms on every stratum of the singularity stratification.
  The same result also holds for many Whitney stratifications.
\end{abstract}

\maketitle
\tableofcontents

\section{Introduction}

\subsection{Introduction and statement of main result}
\label{ssec:intro1}

Differential forms and sheaves of differentials are fundamental objects and
indispensable tools in the study of smooth varieties and complex manifolds.  It
is well-known that for singular spaces, there is no single notion of
``differential form'' that captures all features of the smooth case.  Instead,
there are several competing definitions, each generalising certain aspects.  The
following two classes of differential forms are particularly important.

\begin{description}
\item[Kähler differentials] Uniquely determined by universal properties, Kähler
  differentials are the most fundamental notion of differential form.  Their
  universal properties imply that Kähler differentials can be pulled back via
  arbitrary morphisms, and closely relate them to problems in deformation
  theory.
  
  The sheaf of Kähler differentials on a scheme $X$ is denoted $Ω^p_X$.  As a
  sheaf of $\sO_X$-modules, $Ω^p_X$ is usually rather complicated and has both
  torsion and cotorsion, even if $X$ is reduced.  This makes Kähler
  differentials rather hard to work with in many cases of practical interest.
  \smallskip

\item[Reflexive differentials] Given a normal complex variety $X$, a
  \emph{reflexive differential} on $X$ is a differential form defined only on
  the smooth locus, without imposing any boundary condition near the
  singularities.  Equivalently, a reflexive differential is a section in the
  double dual of the sheaf of Kähler differentials.  Denoting the sheaf of
  reflexive differentials by $Ω^{[p]}_X$, we have
  $$
  Ω^{[p]}_X = \bigl( Ω^p_X \bigr)^{**} = ι_* \bigl(
  Ω^p_{X_{\reg}} \bigr),
  $$
  where $ι: X_{\reg} → X$ denotes the inclusion of the smooth locus.  Since the
  $\sO_X$-module structure is comparatively simple, reflexive differentials are
  often quite useful in practise.  Reflexive differentials arise naturally in a
  number of contexts, for instance in positivity results for differentials on
  moduli spaces.  Note that the dualising sheaf is always a sheaf of reflexive
  differentials, $ω_X = Ω^{[\dim X]}_X$.
\end{description}

\subsubsection*{Reflexive differentials on klt spaces}

Reflexive differential forms do not generally satisfy the same universal
properties as Kähler differentials.  However, it has been shown in a previous
paper \cite{GKKP11} that many of the functorial properties do hold if we
restrict ourselves to normal spaces with Kawamata log terminal (klt)
singularities.  The following theorems summarises one of the main results in
this direction.

\begin{thm}[\protect{Extension Theorem, \cite[Thm.~1.4]{GKKP11}}]\label{thm:extpb}
  Let $Y$ be a normal variety and $f: X → Y$ a resolution of singularities.
  Assume that there exists a Weil divisor $D$ on $Y$ such that the pair $(Y,D)$
  is klt.  If
  $$
  σ ∈ H^0 \bigl( Y,\, Ω^{[p]}_Y \bigr) = H^0 \bigl( Y_{\reg},\,
  Ω^p_{Y_{\reg}} \bigr)
  $$
  is any reflexive differential form on $Y$, then there exists a differential
  form $τ ∈ H^0 \bigl( X,\, Ω^p_X \bigr)$ that agrees on the complement of the
  $f$-exceptional set with the usual pull-back of the Kähler differential
  $σ|_{Y_{\reg}}$.  \qed
\end{thm}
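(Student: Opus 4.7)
The plan is to reduce to a log resolution and then extend the pull-back form across the exceptional divisor in two stages, the second of which is where the klt hypothesis enters in an essential way.

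First, I would replace $f$ by a log resolution of $(Y,D)$, so that the divisor
$$
E := \bigl( \mathrm{Exc}(f) \cup f_*^{-1}\supp(D) \bigr)_{\mathrm{red}}
$$
has simple normal crossing support. On the open set $U := X \setminus E$ the restriction $f|_U$ factors through $Y_{\reg}$, so the usual Kähler pull-back $f^*(σ|_{Y_{\reg}})$ defines a form $τ_U ∈ H^0\bigl( U,\, Ω^p_U \bigr)$. The remaining task is to show that $τ_U$ extends holomorphically across $E$.

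Second, as a preparatory step I would prove the weaker statement that $τ_U$ extends to a section $τ_{\log} ∈ H^0\bigl( X,\, Ω^p_X(\log E) \bigr)$, i.e.~with at worst logarithmic poles along $E$. Since $f_* Ω^p_X(\log E)$ is a reflexive $\sO_Y$-module, the required containment $Ω^{[p]}_Y ⊆ f_* Ω^p_X(\log E)$ may be verified at codimension-two points of $Y$, which reduces this step to an explicit local calculation on a surface. The klt condition is not really used here; normality already does most of the work.

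Third — and this is the genuinely difficult step — I would upgrade $τ_{\log}$ to a section of $Ω^p_X$. Via the residue sequence for the snc pair $(X,E)$, this amounts to showing that $\mathrm{res}_{E_i}(τ_{\log}) = 0$ for every component $E_i ⊂ E$. The discrepancies $a_i > -1$ coming from the klt hypothesis should force these residues to vanish. The cleanest-looking tactic is to pass to a suitable cyclic cover of $X$ — for instance an index-one cover adapted to $K_Y+D$ — in order to reduce the vanishing to a Grauert--Riemenschneider- or Steenbrink-type statement on the cover. An inductive alternative is to cut down to the surface case with general hyperplane sections on $Y$ and exploit that klt surface singularities are quotients, where a direct analysis of the minimal resolution is possible.

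The main obstacle is exactly this third step. The theorem is known to fail if ``klt'' is relaxed to ``log canonical'', so any successful proof must exploit the strict inequality $a_i > -1$ in a genuinely non-formal way. Extracting this strictness — whether through Hodge-theoretic vanishing on a cyclic cover, or through a Lefschetz-style induction to the surface case — is the real content of the theorem; the log-pole extension in step two, and the reduction to a log resolution in step one, are essentially bookkeeping by comparison.
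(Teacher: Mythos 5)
The paper itself does not prove this statement: it is quoted from \cite[Thm.~1.4]{GKKP11} as a known result, so the only ``proof'' on offer here is the citation, and your outline has to be measured against the argument in that reference. Your two-stage architecture --- pass to a log resolution, extend with log poles, then kill the residues using the klt condition --- is indeed the architecture of the proof in \cite{GKKP11} and \cite{GKK08}, so the skeleton is right.

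There are, however, two genuine gaps. First, your second step is not ``essentially bookkeeping''. The assertion that $f_* \Omega^p_X(\log E)$ is a reflexive $\sO_Y$-module is (essentially) equivalent to the log-pole extension statement you are using it to prove, so the argument as written is circular; pushforwards of locally free sheaves under birational morphisms are torsion-free but not reflexive in general (e.g.\ $\pi_*\sO(-E)$ for a point blow-up is a maximal ideal sheaf). The proposed reduction ``check at codimension-two points of $Y$'' also cannot get started: the entire exceptional set lies over $Y_{\sing}$, which has codimension $\geq 2$ in $Y$, so nothing about the extension problem is visible at a codimension-two point unless $Y_{\sing}$ happens to have codimension exactly two --- and even then relating forms on $Y$ to forms on a hyperplane cut is precisely the delicate inductive machinery of \cite{GKKP11}, not a local surface computation. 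Finally, ``normality already does most of the work'' is false: log-pole extension fails for general normal singularities (see the examples in \cite{GKK08}); it genuinely uses log canonicity. Second, your third step --- which you correctly identify as the heart of the theorem --- is a list of candidate techniques rather than an argument. The ingredients you name are in the right orbit (Steenbrink-type vanishing and hyperplane cutting both appear in \cite{GKKP11}), but the index-one-cover idea can only plausibly handle $p = \dim Y$, since klt singularities in dimension $\geq 3$ are not quotient singularities, and the actual proof additionally requires local vanishing for cohomology with support on the exceptional set and a run of the minimal model program to organise the induction. As it stands, the proposal is a correct table of contents for the proof, not a proof.
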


\begin{thm}[\protect{Pull-back morphisms in special cases, \cite[Thm.~4.3]{GKKP11}}]\label{thm:pbGKKP}
  Let $f: X → Y$ be a morphism of normal varieties.  Assume that there exists a
  Weil divisor $D$ on $Y$ such that the pair $(Y,D)$ is klt, and assume that the
  image of $f$ is not contained in the singular set $Y_{\sing}$.  Then there
  exist pull-back morphisms of reflexive forms,
  $$
  \drefl f : f^* Ω^{[p]}_Y → Ω^{[p]}_X \quad \text{and} \quad \drefl
  f : H^0 \bigl( Y,\, Ω^{[p]}_Y \bigr) → H^0 \bigl( X,\, Ω^{[p]}_X
  \bigr),
  $$
  that agrees with the usual pull-back of Kähler differentials wherever this
  makes sense.  \qed
\end{thm}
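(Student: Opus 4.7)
The idea is to reduce, via the Extension Theorem~\ref{thm:extpb}, to a situation where only the ordinary Kähler pull-back is required. Choose a log resolution $\rho: \tilde Y \to Y$ of the pair $(Y,D)$; the Extension Theorem then lifts any reflexive form $\sigma \in H^0(Y, \Omega^{[p]}_Y)$ to a genuine Kähler form $\tilde\sigma \in H^0(\tilde Y, \Omega^p_{\tilde Y})$. Since the image of $f$ meets $Y_{\reg}$, the composition $\rho^{-1}\circ f$ is a rational map defined on a dense open subset of $X$. Pick a resolution $\pi: \tilde X \to X$ that simultaneously resolves the singularities of $X$ and the indeterminacy of $\rho^{-1}\circ f$, yielding a morphism $F: \tilde X \to \tilde Y$ with $\rho\circ F = f\circ\pi$.

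On global sections, set $\drefl f(\sigma) := \pi_*\bigl(F^* \tilde\sigma\bigr)$, which is a priori only a section of $\pi_*\Omega^p_{\tilde X}$. Because $\pi$ is birational and $\pi_*\Omega^p_{\tilde X}$ is torsion-free, the tautological identification over an open subset of $X_{\reg}$ extends, by reflexivity of the target, to an inclusion $\pi_*\Omega^p_{\tilde X} \hookrightarrow \Omega^{[p]}_X$. Hence $\drefl f(\sigma)$ is a well-defined reflexive form on $X$. Passing from global sections to a sheaf morphism $f^* \Omega^{[p]}_Y \to \Omega^{[p]}_X$ is then a matter of repeating the construction on each affine open $V\subseteq Y$ and extending $\sO_Y(V)$-linearly to $\sO_X(f^{-1}V) \otimes H^0(V, \Omega^{[p]}_Y)$; uniqueness (see next paragraph) ensures that the local maps glue.

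Independence of the triple $(\rho, \pi, F)$ follows from a standard domination argument: any two such triples are dominated by a common third, and the resulting pull-backs agree on the dense open subset $U_0 := f^{-1}(Y_{\reg}) \cap X_{\reg}$, where both coincide with the ordinary Kähler pull-back of $\sigma|_{Y_{\reg}}$. Torsion-freeness of $\Omega^{[p]}_X$ forces the two sections to agree on all of $X$, and the same argument simultaneously establishes the compatibility with Kähler pull-back asserted in the theorem.

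The hard part is the first step—producing the Kähler extension $\tilde\sigma$ on $\tilde Y$. This is precisely the content of the Extension Theorem, itself a substantial result of \cite{GKKP11}. Without it, $F^*\tilde\sigma$ would be defined only away from the $F$-preimage of the $\rho$-exceptional set, which generically contains divisorial components of $\tilde X$ sitting over $f^{-1}(Y_{\sing})$, and there would be no reason for $\pi_*(F^*\tilde\sigma)$ to extend reflexively across $f^{-1}(Y_{\sing})$. Once the extension is granted, the remaining verifications—sheafification and independence—are comparatively routine.
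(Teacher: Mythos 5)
Your argument is correct and follows exactly the route the paper intends: the statement is quoted from \cite[Thm.~4.3]{GKKP11} without proof and is explicitly described as a corollary of the Extension Theorem~\ref{thm:extpb}, which is precisely how you derive it (lift $σ$ to a genuine Kähler form $\tildeσ$ on a resolution of $Y$, pull back through a common smooth model dominating $X$ and $\wtilde Y$, and descend via $π_* Ω^p_{\wtilde X} \into Ω^{[p]}_X$ using torsion-freeness). The only point worth making explicit is that $π$ must be an isomorphism over a big open subset of $X$ for that last inclusion to hold — which is automatic here, since the indeterminacy locus of $ρ^{-1}◦ f$ has codimension at least two by normality of $X$ and properness of $ρ$.
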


Theorems~\ref{thm:extpb} and its corollary, Theorem~\ref{thm:pbGKKP}, allow to
study reflexive differentials in the context of the minimal model program.
These results have been applied to a variety of settings, including a study of
hyperbolicity of moduli spaces, \cite{KK10} or see \cite{Keb11} for a survey, a
partial generalisation of the Beauville--Bogomolov decomposition theorem
\cite{GKP11} and deformations of Calabi--Yau varieties \cite{Kol12}.

\subsubsection*{Main result}

The main result asserts the existence of a useful pull-back morphism in a more
general setting.

\begin{thm}[Existence of pull-back morphisms in general, compare Theorem~\ref{thm:PB-thmA}]\label{thm:main}
  Let $f : X → Y$ be any morphism between normal complex varieties.  Assume that
  there exists a Weil divisor $D$ on $Y$ such that the pair $(Y,D)$ is klt.
  Then there exists a pull-back morphism
  $$
  \drefl f : f^* Ω^{[p]}_Y → Ω^{[p]}_X,
  $$
  uniquely determined by natural universal properties.
\end{thm}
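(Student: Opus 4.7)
The plan is to combine the Extension Theorem (Theorem~\ref{thm:extpb}) with the pull-back in the non-degenerate case (Theorem~\ref{thm:pbGKKP}), using a suitably chosen pair of resolutions to reduce the question to the ordinary pull-back of K\"ahler differentials on smooth varieties. First I would fix a strong log resolution $\pi : \widetilde{Y} \to Y$ of the klt pair $(Y,D)$. The Extension Theorem implies that every reflexive form on $Y$ lifts to a genuine K\"ahler form on $\widetilde{Y}$, so the canonical injection $\pi_* \Omega^p_{\widetilde{Y}} \hookrightarrow \Omega^{[p]}_Y$ is in fact an isomorphism. Next, by resolving an appropriate component of the fibre product $X \times_Y \widetilde{Y}$, I would construct a commutative square with $\widetilde{X}$ smooth, $\widetilde{f} : \widetilde{X} \to \widetilde{Y}$ a morphism, and $\phi : \widetilde{X} \to X$ proper and surjective.

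With these data in place, the natural candidate for $\drefl f$ is the composition
$$
f^* \Omega^{[p]}_Y \;\cong\; f^* \pi_* \Omega^p_{\widetilde{Y}} \;\longrightarrow\; \phi_* \widetilde{f}^* \Omega^p_{\widetilde{Y}} \;\longrightarrow\; \phi_* \Omega^p_{\widetilde{X}} \;\longrightarrow\; \Omega^{[p]}_X,
$$
whose first arrow is the Extension-Theorem isomorphism, the second is the canonical base-change/adjunction map, the third is the ordinary K\"ahler pull-back between smooth varieties, and the last is the inclusion $\phi_* \Omega^p_{\widetilde{X}} \hookrightarrow \Omega^{[p]}_X$ available whenever $\phi$ restricts to a birational morphism over $X_{\reg}$. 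Independence of the resolutions chosen should then follow by dominating any two such diagrams with a common further resolution and invoking functoriality of K\"ahler pull-back on smooth varieties; the universal properties will then fix $\drefl f$ uniquely.

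The genuinely new content of the statement, and the main obstacle, is the case $f(X) \subseteq Y_{\sing}$, in which Theorem~\ref{thm:pbGKKP} does not apply and no choice of diagram makes $\phi$ birational, so the last arrow above is not available for free. Here I would argue by induction on the codimension of the stratum of $Y_{\sing}$ containing $f(X)$, reducing to the key case in which $f$ is the normalisation of an irreducible component of $Y_{\sing}$. In that situation the task is to show that the pulled-back form $\widetilde{f}^* \widetilde{\sigma}$ on $\widetilde{X}$ is vertical with respect to $\phi$, so that it descends to a K\"ahler form on $X_{\reg}$ and extends by reflexivity to a section of $\Omega^{[p]}_X$. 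This descent is where the klt hypothesis must enter in an essential way, presumably through the same discrepancy-based control that already underlies the Extension Theorem.
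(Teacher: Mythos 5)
Your reduction to the case $f(X) \subseteq Y_{\sing}$ is the right one, and the first half of the proposal (the Extension Theorem recast as $\pi_*\Omega^p_{\widetilde Y} \cong \Omega^{[p]}_Y$, the fibre-product construction of $\widetilde X$, and resolution-independence via a common dominating resolution) matches what the paper does when the image meets $Y_{\reg}$. But the entire content of the theorem lies in the step you defer, and the mechanism you name for it is not the one that works. The descent of $\widetilde f^{\,*}\widetilde\sigma$ along $\phi$ is not obtained by ``discrepancy-based control as in the Extension Theorem'': the discrepancy hypothesis has already been spent in producing the regular form $\widetilde\sigma$ on $\widetilde Y$, and it says nothing about whether the restriction of $\widetilde\sigma$ to the exceptional locus over $T := \overline{f(X)}$ is a pull-back of a form on $T$. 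What the paper actually uses is (i) Hacon--McKernan's theorem that the fibres of a resolution of a klt space are rationally \emph{chain} connected, (ii) the vanishing of \emph{torsion-free} differentials $\widecheck{\Omega}{}^q = \Omega^q/\tor$ on reduced, reducible, rationally chain connected schemes (Theorem~\ref{thm:diffRCspaces}), and (iii) a filtration of $\widecheck{\Omega}{}^p_E$ relative to a local analytic projection onto $T$ (Lemma~\ref{lem:proj} and Proposition~\ref{prop:TFDrelFilt}), which converts the fibrewise vanishing into the statement that $\widetilde\sigma|_E$ comes from $T$. Point (ii) is why Part~\ref{part:1} of the paper exists: the fibres in question are reducible and non-normal, their sheaves of K\"ahler differentials \emph{do} carry nonzero global sections (torsion), and one needs a pull-back functor for $\widecheck{\Omega}{}^p$ that is defined even for morphisms landing in the singular locus (Proposition~\ref{prop:pull-back-TFD1}). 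None of this machinery appears in your outline, so the ``verticality''/descent claim remains an assertion rather than an argument.

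Two further points. Your proposed induction on the codimension of the stratum containing $f(X)$ is problematic: $\overline{f(X)}$ need not be a stratum of any natural stratification, and gluing the inductive steps would require the composition law, which is itself one of the universal properties still to be constructed; the paper instead works directly with $T=\overline{f(X)}$ and a general point of $T$. Finally, even after descent one only obtains a form on the preimage of a dense open subset $T^{\circ}\subseteq T$ where the projection exists; extending it to all of $X$ is not a formal consequence of reflexivity but requires the regularity criterion of \cite{GKK08} applied to a generically finite covering of $X$ (this is also where the Graber--Harris--Starr section of your Example~\ref{ex:pb2} sketch gets replaced, since that result is unavailable over higher-dimensional $T$).
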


\begin{rem}[Reference to precise statement]
  The ``natural universal properties'' mentioned in Theorem~\ref{thm:main} are a
  little awkward to formulate.  Precise statements are given in
  Theorem~\ref{thm:PB-thmA} and Section~\ref{ssec:mwad}.  In essence, it is
  required that the pull-back morphisms agree with the pull-back of Kähler
  differentials wherever this makes sense, and that they satisfy the composition
  law.
\end{rem}

\subsubsection*{Discussion of the main result}

It should be noted that Theorem~\ref{thm:main} does not require the image of $f$
to intersect the smooth locus of $Y_{\reg}$.  One particularly relevant setting
to which Theorem~\ref{thm:main} applies is that of a klt space $Y$, and the
inclusion (or normalisation) of the singular locus, say $f: X = Y_{\sing} → Y$.
It might seem surprising that a pull-back morphism exists in this context,
because reflexive differential forms on $Y$ are, by definition, differential
forms defined on the \emph{complement} of $Y_{\sing}$, and no boundary
conditions are imposed that would govern the behaviour of those forms near the
singularities.

In essence, Theorem~\ref{thm:main} asserts that differential forms defined on
the complement of $Y_{\sing}$ determine forms \emph{on} $Y_{\sing}$.  The
following immediate corollary gives a precise formulation.

\begin{cor}[Extension across the singularity stratification]\label{cor:singStrat}
  Let $Y$ be a normal complex variety with klt singularities.  Define a
  stratification of $Y$ by disjoint, locally closed, smooth subvarieties,
  $(Y^i)_{0 ≤ i ≤ k} ⊆ Y$, as follows.  Consider the chain of
  closed subvarieties,
  $$
  Y = \widehat Y^0 \supsetneq \widehat Y^1 \supsetneq \widehat Y^2 \supsetneq \cdots \supsetneq \widehat Y^k = \emptyset, \quad \text{  where $\widehat Y^{i+1} := \widehat Y^i_{\sing}$.}
  $$
  Given any index $i$, let $Y^i$ be the smooth locus of $\widehat Y^{i}$, that
  is, $Y^i := \widehat Y^{i}_{\reg}$.  Then, given any differential form
  $$
  σ_0 ∈ H^0 \bigl( Y^0,\, Ω^p_{Y^0} \bigr) = H^0 \bigl( Y_{\reg},\,
  Ω^p_{Y_{\reg}} \bigr),
  $$
  we obtain a sequence of induced forms $σ_i ∈ H^0 \bigl( Y^i,\,
  Ω^p_{Y^i} \bigr)$, for all $0 < i < k$.  \qed
\end{cor}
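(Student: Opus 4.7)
The plan is to apply Theorem~\ref{thm:main} directly to the normalisation of each stratum closure $\widehat Y^i$, without any need for induction on $i$. Observe first that the given form $\sigma_0 \in H^0(Y^0, \Omega^p_{Y^0})$ is, by the very definition of the reflexive hull, the same datum as a global reflexive form $\tilde\sigma \in H^0\bigl(Y, \Omega^{[p]}_Y\bigr)$. It therefore suffices, for each index $0 < i < k$, to produce a sensible pull-back of $\tilde\sigma$ to the smooth variety $Y^i$.

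For this, let $\nu_i : \tilde Y^i \to \widehat Y^i$ be the normalisation map, and let $f_i : \tilde Y^i \to Y$ denote its composition with the inclusion $\widehat Y^i \hookrightarrow Y$. The source $\tilde Y^i$ is normal by construction, and the target $(Y,D)$ is klt by assumption, so Theorem~\ref{thm:main} applies and supplies a pull-back morphism
$$
\drefl f_i : f_i^* \Omega^{[p]}_Y \longrightarrow \Omega^{[p]}_{\tilde Y^i}.
$$
Applying $\drefl f_i$ to the section $f_i^* \tilde\sigma$ yields a reflexive differential $\tau_i \in H^0\bigl(\tilde Y^i, \Omega^{[p]}_{\tilde Y^i}\bigr)$ on the normalised stratum.

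To conclude, we descend $\tau_i$ from $\tilde Y^i$ back to $Y^i$. Since smooth points are normal and the normalisation map is an isomorphism over the normal locus, $\nu_i$ restricts to an isomorphism $\nu_i^{-1}(Y^i) \cong Y^i$ onto an open subvariety of the smooth locus of $\tilde Y^i$. On that open subvariety the reflexive form $\tau_i$ is automatically a genuine Kähler differential, which under the isomorphism becomes the desired $\sigma_i \in H^0\bigl(Y^i, \Omega^p_{Y^i}\bigr)$. There is essentially no obstacle beyond Theorem~\ref{thm:main} itself; the only point that warrants a moment of care is this identification of $Y^i$ with an open subset of the smooth locus of $\tilde Y^i$, together with the standard fact that a reflexive form restricts to a Kähler form on any smooth open subvariety.
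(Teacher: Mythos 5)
Your proposal is correct and is exactly the argument the paper intends: the corollary is stated as an immediate consequence of Theorem~\ref{thm:main} (via the inclusion, or normalisation, of each stratum closure), which is precisely what you carry out, including the correct observation that no induction is needed because each $\widehat Y^i$ is pulled back directly from the klt space $Y$ rather than from the (not necessarily klt) intermediate strata. The only cosmetic simplification available is that one may apply the theorem directly to the inclusion of the smooth, hence normal, locally closed stratum $Y^i \hookrightarrow Y$ (working one irreducible component at a time, since klt base spaces are irreducible by definition), which avoids the detour through the normalisation of $\widehat Y^i$ altogether.
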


\begin{rem}
  The singularity stratification defined in Corollary~\ref{cor:singStrat} is the
  coarsest stratification whose strata are locally closed and smooth.  The
  conclusion also holds for finer stratifications, such as Whitney
  stratifications used in the discussion of intersection homology and perverse
  sheaves.
\end{rem}

\subsection{Optimality of the result}

The following two examples show that it is generally not possible to construct
reasonable pull-back morphisms for reflexive differentials on even the simplest
log canonical spaces.

\begin{setting}[Setting for Examples~\ref{ex:nlc1} and \ref{ex:nlc2}]\label{setting:ex12}
  Let $C ⊂ \bP^2$ be a smooth elliptic curve, and let $X ⊂ \bC^3$ be the affine
  cone over $E$.  An elementary computation shows that $X$ is normal and has an
  isolated, Gorenstein, log canonical singularity at the vertex point $x ∈ X$.
  The canonical bundle $ω_X = Ω^{[2]}_X$ is in fact even trivial.

  We denote the blow-up of the vertex point $x$ by $β : \wtilde X → X$.  The
  variety $\wtilde X$ is then smooth.  Denoting the $β$-exceptional curve by
  $E$, there exists an isomorphism $ω_{\wtilde X} \cong \sO_{\wtilde X}(-E)$.
  Observing that $\wtilde X$ is isomorphic to the total space of the line bundle
  $\sO_C(1)$, we have constructed the following commutative diagram of morphisms
  between normal varieties,
  \begin{equation}\label{diageset17}
    \xymatrix{ %
      E \ar[rrr]^{ι_E \text{, inclusion}} \ar[d]_{c \text{, constant}} &&& \wtilde X \ar[rrr]^{\text{$π$, $\bA^1$-bundle}} \ar[d]^{\text{$β$, blow-up}} &&& C \\
      \{x \} \ar[rrr]_{ι_x \text{, inclusion}} &&& X.
    }
  \end{equation}
\end{setting}

\begin{example}[Problems arising from differential forms with poles]\label{ex:nlc1}
  We maintain assumptions and notation of Setting~\ref{setting:ex12}.  To give a
  pull-back map that agrees outside of $E$ with the usual pull-back map for
  Kähler differentials, it is then equivalent to give a sheaf morphism
  $$
  \sO_{\wtilde X} \cong π^* ω_X \xrightarrow{\quad \drefl β \quad}
  ω_{\wtilde X} \cong \sO_{\wtilde X}(-E)
  $$
  that is isomorphic away from $E$.  Such a morphism does not exist.  If $σ ∈
  H^0\bigl( X,\, ω_X \bigr)$ denotes a global generator of the canonical sheaf,
  there is no section $\wtilde{σ} ∈ H^0\bigl( \wtilde X,\, ω_{\wtilde X} \bigr)$
  that agrees with $σ$ away from the exceptional set $E$.
\end{example}

\begin{example}[Impossibility to satisfy composition law]\label{ex:nlc2}
  We maintain assumptions and notation of Setting~\ref{setting:ex12}.  We
  consider reflexive one-forms on $X$ and assume there were pull-back morphisms
  $$
  \drefl β : H^0\bigl( X,\, Ω^{[1]}_{X} \bigr) → H^0\bigl( \wtilde
  X,\, Ω^1_{\wtilde X} \bigr) \quad \text{and} \quad \drefl ι_x :
  H^0\bigl( X,\, Ω^{[1]}_{X} \bigr) → \underbrace{H^0\bigl( \{x\} ,\,
  Ω^1_{\{x\}} \bigr)}_{= \{ 0\}}
  $$
  satisfying the following natural compatibility conditions.
  \begin{enumerate}
  \item\label{il:1} Away from the singular point, the map $\drefl β$ agrees with
    the usual pull-back map of Kähler differentials.
  \item\label{il:2} The composition law holds.  In particular,
    Diagram~\eqref{diageset17} will imply that
    $$
    \dK ι_E ◦ \drefl β = \dK c ◦ \drefl ι_x
    $$
    where $\dK$ denotes the usual pull-back of Kähler differentials.  Since $c$
    is the constant map, this implies that $\dK ι_E ◦ \drefl β = 0$.
  \end{enumerate}

  Let $τ_C ∈ H^0\bigl( C,\, Ω^1_C \bigr)\setminus \{0\}$ be any non-vanishing
  differential form on the elliptic curve $C$, and let $\wtilde{τ} := \dK π (τ)
  ∈ H^0\bigl( \wtilde X,\, Ω^1_{\wtilde X} \bigr)$ be its pull-back to $\wtilde
  X$.  Since $\wtilde X \setminus E$ and $X_{\reg} = X \setminus \{x \}$ are
  isomorphic, the form $\wtilde{τ}$ induces a (reflexive) differential form on
  $X$, say
  $$
  τ ∈ H^0\bigl( X,\, Ω^{[1]}_{X} \bigr) = H^0\bigl( X_{\reg},\,
  Ω^1_{X_{\reg}} \bigr).
  $$
  Property~(\ref{ex:nlc2}.\ref{il:1}) then implies that $\drefl β (τ) =
  \wtilde{τ}$, and
  $$
  \bigl( \dK ι_E ◦ \drefl β \bigr)(τ) = \dK ι_E (\wtilde{τ})
  \underset{\text{(\ref{ex:nlc2}.\ref{il:2})}}{=} \dK (\underbrace{π ◦
    ι_E}_{\mathclap{\text{isomorphism}}} ) (τ_C) \not = 0.
  $$
  This clearly contradicts Property~(\ref{ex:nlc2}.\ref{il:2}), showing that
  pull-back morphisms satisfying these compatibility conditions cannot exist.
\end{example}

In the setting of Example~\ref{ex:nlc1}, there does exist a differential form
with logarithmic poles along $E$, say $\wtilde{σ}∈ H^0\bigl( \wtilde X,\,
ω_{\wtilde X} \otimes \sO_{\wtilde X}(E) \bigr)$, that agrees with $σ$ away from
the singular set.  One could argue that $\wtilde{σ}$ should be taken as a
pull-back of $σ$.  While this might be a viable definition when discussing the
blow-up morphism $β$, problems occur as soon as one wishes to pull-back $σ$ via
the composition $ι_E ◦ β$.

In addition to the problems originating from the existence of poles,
Example~\ref{ex:nlc2} shows that there are other and more fundamental reasons
why reasonable pull-back maps cannot be defined for log canonical varieties:
there is in general no way to define a pull-back map in a way that is compatible
with the usual composition law.

\subsection{Idea of proof and outline of paper}
\label{sssec:idea}

The proof of our main result is technically somewhat involved.  The main idea,
however, is quite elementary and straightforward.  Consider the following simple
setting.

\begin{setting}[Setting for Examples~\ref{ex:pb1} and \ref{ex:pb2}]\label{setting:ex11}
  Let $Y$ be a variety with klt singularities.  Assume that the singular locus
  is a smooth curve $C$ and that the singularities of $Y$ are resolved by a
  single blow-up of the curve $C$, say $π : \wtilde Y → Y$.  Assume further that
  the exceptional set $E$ of this resolution map is irreducible and smooth over
  $C$.

  Let $σ ∈ H^0 \bigl( Y,\, Ω^{[1]}_Y \bigr)$ be any given reflexive differential
  form on $Y$.  Theorem~\ref{thm:extpb} asserts the existence of a regular
  differential form $\wtilde{σ} ∈ H^0 \bigl( \wtilde Y,\, Ω^1_{\wtilde Y}
  \bigr)$ that agrees outside of $E$ with the form $σ$.
\end{setting}

\begin{example}[Pulling back reflexive differentials via a resolution map]\label{ex:pb1}
  In Setting~\ref{setting:ex11}, define $\wtilde{σ}$ as the pull-back of the
  form $σ$ to $\wtilde Y$.  This choice is unique if we require our
  pull-back form to agree on the smooth locus with the usual pull-back of Kähler
  differentials.
\end{example}

\begin{example}[Pulling back reflexive differentials in more generality]\label{ex:pb2}
  In Setting~\ref{setting:ex11}, let $X$ be a smooth variety and $f : X → Y$ a
  morphism whose image is contained in $C$.  We aim to define a pull-back form
  $σ_X ∈ H^0 \bigl( X,\, Ω^1_X \bigr)$.

  A fundamental theorem of Hacon and McKernan, \cite[Cor.~1.5]{HMcK07}, asserts
  that the fibres of $π|_E : E → C$ are rationally connected manifolds.
  Recalling that rationally connected manifolds do not admit non-trivial
  differential forms, the long exact sequence of the relative differential
  sequence,
  $$
  0 → H^0\bigl( C,\, Ω^1_C \bigr) \xrightarrow{\quad d(π|_E) \quad} H^0
  \bigl( E,\, Ω^1_E \bigr) → \underbrace{H^0 \bigl( E,\, Ω^1_{E/C}
    \bigr)}_{= 0} → \cdots,
  $$
  shows that the restriction of $\wtilde{σ}$ to $E$ really is the pull-back of a
  form $τ ∈ H^0 \bigl( C,\, Ω^1_C \bigr)$ on $C$.  Let $σ_X$ be the standard
  pull-back of the Kähler differential $τ$ to $X$.

  The choice of $σ_X$ is unique if we require that the pull-back morphisms
  satisfy the composition law.  To this end, recall Graber--Harris--Starr's
  generalisation of Tsen's theorem, \cite[Thm.~1.1]{GHS03}, which gives the
  existence of a section $s : C → E ⊂ E$, forming a commutative diagram
  $$
  \xymatrix{ %
    && E \ar@{^(->}[rr]^{ι_E, \text{ inclusion}} && \wtilde Y \ar[d]^{π, \text{ resolution}} \\
    X \ar[rr]^{f_C} \ar@/_3mm/[rrrr]_f && C \ar[u]^{s} \ar@{^(->}[rr]^{ι_C, \text{ inclusion}} &&  Y 
  }
  $$
  Choosing one such $s$, observe that
  \begin{align*}
    σ_X & = (\dK f_C) (\dK s) (\dK ι_E) (\dK π)(σ)  \\
    & = (\dK f_C) (\dK s) (\dK ι_E) (\wtilde{σ}) && \text{by Example~\ref{ex:pb1}} \\
    & = (\dK f_C) (\dK s) (\wtilde{σ}|_E) \\
    & = (\dK f_C) (τ).
  \end{align*}
  Note that this completely dictates the choice of $σ_X$.
\end{example}

Two problems occur when trying to adapt the ideas outlined in
Examples~\ref{ex:pb1}--\ref{ex:pb2} to the general setting, where $X$ is allowed
to have arbitrary klt singularities.

\begin{problem}\label{prob:1}
  The exceptional set of a resolution morphism need not be irreducible, or
  relatively smooth over the singular locus.  Fibres of resolution maps are known
  to be rationally chain connected, but Kähler differentials might well exist on
  these non-normal spaces.
\end{problem}

\begin{problem}\label{prop:2}
  The result of Graber--Harris--Starr is specific to families over 1-dimensional
  base varieties, and is not known to hold in higher dimensions.
\end{problem}

To overcome the Problem~\ref{prob:1}, we need to consider the sheaves
$\widecheck{Ω}^p$ of ``Kähler differentials modulo torsion'' and discuss their
properties on reduced, reducible, and not necessarily normal schemes.  This is
done in Part~\ref{part:1}.  There, we establish a number of fundamental
universal properties and show that reduced, reducible, rationally chain
connected schemes with simple normal crossings do not admit any ``Kähler
differential modulo torsion''.  The notions of torsion and torsion-free sheaves
on reducible spaces do not seem be discussed much in the literature.  For the
reader's convenience, we recall the definition and establish basic properties in
Appendix~\ref{app:A}.

The Problem~\ref{prop:2} does not pose fundamental difficulties.  However, it
does make the proof of our main theorem, given in Part~\ref{part:2} of this
paper, somewhat awkward and lengthy as we constantly need to switch between the
spaces in question and suitable coverings, for which a section $s$ exists.

\PreprintAndPublication{}{The preprint version of this paper, available as
  \href{http://arxiv.org/abs/1210.3255}{arXiv:1210.3255} includes more details,
  explicit computations and several additional figures.}

\subsection{Notation and global conventions}

Throughout the text, we work over the complex number field.  We discuss several
notions of differential forms on singular spaces, and in each case define
pull-back morphisms.  To avoid the obvious potential for confusion, we clearly
distinguish between the various notions of pulling back.

\begin{notation}[Sheaves of differentials and pull-back morphisms]
  If $f : X → Y$ is any morphism between varieties, we denote the sheaves of
  Kähler differentials by $Ω^p_X$ and $Ω^p_Y$, respectively.  The
  standard pull-back maps of Kähler differentials are denoted as
  $$
  \dK f : f^* Ω^p_Y → Ω^p_X \quad \text{and} \quad \dK f : H^0
  \bigl( Y,\, Ω^p_Y \bigr) → H^0 \bigl( X,\, Ω^p_X \bigr).
  $$

  Part~\ref{part:1} discusses the sheaf of Kähler differentials modulo torsion,
  traditionally denoted as $\widecheck{Ω}^p_X := Ω^p_X / \tor$.  As we will see
  in Section~\ref{sec:TFD}, there exists a meaningful notion of pulling back
  sections is this sheaf.  The associated morphisms of sheaves and vector spaces
  are denoted as
  $$
  \dnoTor f : f^* \widecheck{Ω}^p_Y → \widecheck{Ω}^p_X \quad \text{and} \quad
  \dnoTor f : H^0 \bigl( Y,\, \widecheck{Ω}^p_Y \bigr) → H^0 \bigl( X,\,
  \widecheck{Ω}^p_X \bigr).
  $$

  Part~\ref{part:2} discusses reflexive differentials.  The sheaf of reflexive
  differentials will always be denoted by $Ω^{[p]}_X := \bigl( Ω^p_X
  \bigr)^{**}$.  Following the notation introduced in Theorem~\ref{thm:pbGKKP},
  pull-back morphisms for reflexive differentials will be denoted by $\drefl f$.
\end{notation}

\begin{notation}[Sheaves defined on subschemes]\label{not:sdos}
  Let $X$ be a scheme and $Y ⊆ X$ a subscheme, with associated inclusion
  map $ι : Y → X$.  Sheaves $\sF$ on $Y$ will often be viewed as sheaves on
  the ambient space $X$.  If no confusion is likely to arise, we follow standard
  notation and write $\sF$ as a shorthand for the technically more correct
  $ι_* \sF$.
\end{notation}

\subsection{Acknowledgements}

The author would like to thank the members of the algebraic and arithmetic
geometry groups at the University of Freiburg for many inspiring
discussions.  Example~\ref{ex:nlc2} was perceived in discussion with Patrick
Graf.  Fritz Hörmann suggested a simplified proof of
Theorem~\ref{prop:pull-back-TFD1}.  Fred Rohrer (Tübingen) and Angelo Vistoli
(Bologna) kindly answered questions and provided references.

After the paper appeared in preprint form, we have learned from Clemens Jörder
that parts of Section~\ref{sec:TFD} overlap with \cite{Fer70}, which discusses
torsion-free differentials in a related settung, but using rather different
language.

\part{Torsion-free differentials}
\label{part:1}

\section{Torsion-free differentials and their pull-back properties}
\label{sec:TFD}

\subsection{The definition of torsion-free differentials}

As indicated in the introduction, we need to discuss sheaves of ``Kähler
differentials modulo torsion''.  The following notation will be used.  We refer
to Appendix~\ref{app:A} for a discussion of torsion sheaves on reduced, but
possibly reducible spaces. 

\begin{notation}[Torsion differentials and torsion-free differentials]
  Let $X → Y$ be a morphism of reduced, quasi-projective schemes.  Given any
  number $p$, define $\widecheck{Ω}^p_{X/Y}$ as the cokernel of the sequence
  \begin{equation}\label{eq:torSeq}
    \xymatrix{ %
      0 \ar[r] & \tor Ω^p_{X/Y} \ar[r]^{α_{X/Y}} & Ω^p_{X/Y} \ar[r]^{β_{X/Y}} & \widecheck{Ω}^p_{X/Y} \ar[r] & 0 %
    }
  \end{equation}
  If $Y$ is a point, we follow the usual notation and write $\widecheck{Ω}^p_X$
  instead of $\widecheck{Ω}^p_{X/Y}$.  Consider the torsion subsheaf $\tor
  Ω^p_{X/Y} ⊆ Ω^p_{X/Y}$, as introduced in Definition~\vref{defn:torsion}.
  Sections in $\tor Ω^p_{X/Y}$ and $\tor Ω^p_X$ are called (relative)
  \emph{torsion differentials}.  By slight abuse of language, we refer to
  sections in $\widecheck{Ω}^p_{X/Y}$ and $\widecheck{Ω}^p_X$ as (relative)
  \emph{torsion-free differentials}.
\end{notation}

\begin{rem}[Characterisation of torsion and torsion-free differentials]\label{gen:charTFD}
  Torsion differentials are characterised among Kähler differentials as those
  that vanish at general points of all irreducible components of $X$.  A
  torsion-free differential on $X$ vanishes if and only if it vanishes
  generically on all irreducible components of $X$.  We refer to
  Explanation~\ref{expl:torgenpts} for further discussion.
\end{rem}

\begin{notation}[Morphisms $α_{X/Y}$ and $β_{X/Y}$]
  Sequence~\eqref{eq:torSeq} is obviously of great importance in the discussion
  of torsion-free differentials.  We will therefore maintain the meaning of the
  symbols $α_{X/Y}$ and $β_{X/Y}$ throughout the present
  Section~\ref{sec:TFD}.  Again, if $Y$ is a point, we write $α_X$ instead
  of $α_{X/Y}$.
\end{notation}

\subsection{Pull-back properties}

Given a morphism between two varieties, we aim to show that the usual pull-back
map of Kähler differentials always induces a pull-back map of torsion-free
differentials, even if the image of the morphism is contained in the singular
set of the target variety.  The following proposition, which asserts that the
pull-back of a torsion-differential is always torsion, is a first step in this
direction.

\begin{prop}[Pull-back of torsion differentials are torsion]\label{prop:pull-back-TFD1}
  Let $f : X → Y$ be a morphism of reduced, quasi-projective schemes.  If $σ ∈
  H^0\bigl( Y,\, \tor Ω^p_Y \bigr)$ is a torsion-form on $Y$, then $\dK f (σ)$
  is a torsion form on $X$.  In other words,
  $$
  \dK f (σ) ∈ H^0\bigl( X,\, \tor Ω^p_X \bigr) ⊂ H^0\bigl( X,\, Ω^p_X \bigr).
  $$
\end{prop}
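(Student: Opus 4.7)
Whether $\dK f(σ)$ is a torsion form is detected at the stalks at the generic points of $X$ by Remark~\ref{gen:charTFD}, so I fix an irreducible component $X_i ⊆ X$ with generic point $η_i$ and reduce the task to showing that $(\dK f(σ))_{η_i} = 0$ in $Ω^p_{K(X_i)/\bC}$.  The main tool is resolution of singularities, available because we work over $\bC$.

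Choose resolutions $μ : \widetilde X_i → X_i$ and $π : \widetilde Y → Y$, the latter obtained by resolving each irreducible component of $Y$ separately, so that both $\widetilde X_i$ and (each component of) $\widetilde Y$ are smooth.  The central observation is that $\dK π(σ) = 0$ in $Ω^p_{\widetilde Y}$: by Remark~\ref{gen:charTFD} the form $σ$ vanishes on a dense open $U ⊆ Y$ that contains every generic point of $Y$; hence $\dK π(σ)$ vanishes on the dense open $π^{-1}(U) ⊆ \widetilde Y$; and since $\widetilde Y$ is smooth the sheaf $Ω^p_{\widetilde Y}$ is locally free, so any section vanishing on a dense open must vanish everywhere.

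Assume first that $f(X_i) \nsubseteq Y_{\sing}$.  Then the rational map $\widetilde X_i \to \widetilde Y$ determined by $π^{-1} ◦ f ◦ ι_i ◦ μ$ is regular on a nonempty open, where $ι_i : X_i \into X$ is the inclusion.  Hironaka's theorem on resolution of indeterminacy thus produces a smooth variety $Z$, a proper birational morphism $β : Z → \widetilde X_i$, and a morphism $\widetilde g : Z → \widetilde Y$ satisfying $π ◦ \widetilde g = f ◦ ι_i ◦ μ ◦ β$.  Applying $\dK \widetilde g$ to the vanishing $\dK π(σ) = 0$ and invoking functoriality of the Kähler pull-back yields $\dK β\bigl(\dK(f ◦ ι_i ◦ μ)(σ)\bigr) = 0$ on $Z$.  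Birationality of $β$ makes $\dK β$ an isomorphism at the generic point of $Z$, so $\dK(f ◦ ι_i ◦ μ)(σ)$ vanishes at the generic point of $\widetilde X_i$ and hence, by torsion-freeness of $Ω^p_{\widetilde X_i}$ on smooth $\widetilde X_i$, vanishes everywhere; birationality of $μ$ then gives $(\dK f(σ))_{η_i} = 0$, as desired.

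The main obstacle is the complementary case $f(X_i) ⊆ Y_{\sing}$, in which the rational lift above simply does not exist.  I plan to resolve it by induction on $\dim Y$: factor $f|_{X_i}$ as $X_i \xrightarrow{g} W \into Y$, where $W$ is the reduced singular locus of $Y$, and establish first that $\dK ι_W(σ) \in \tor Ω^p_W$.  For this, I would pass to a log resolution of the pair $(Y, Y_{\sing})$ chosen so that $π^{-1}(W)$ contains a smooth irreducible component $E$ dominating a prescribed irreducible component $W_j ⊆ W$; the identity $\dK π(σ) = 0$ restricts along $E$ to $\dK(π|_E)(\dK ι_W(σ)) = 0$, and generic smoothness of $π|_E$, which is automatic in characteristic zero, renders the induced map $Ω^p_{K(W_j)/\bC} → Ω^p_{K(E)/\bC}$ injective at the generic point.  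Hence $\dK ι_W(σ)$ vanishes at the generic point of $W_j$; running this argument for each component of $W$ shows that $\dK ι_W(σ) \in \tor Ω^p_W$, and the inductive hypothesis, applied to the morphism $g : X_i → W$ whose target has strictly smaller dimension, then closes the argument.
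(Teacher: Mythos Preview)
Your argument is correct, but it takes a considerably longer path than the paper's.  The paper's proof avoids both the case distinction and the induction on $\dim Y$ by a single observation: after reducing to $X$ smooth irreducible and $Y$ irreducible, one simply chooses a component of the fibre product $X \times_Y \widetilde Y$ that surjects onto $X$ and desingularises it to obtain a smooth $\widetilde X$ fitting into a commutative square
\[
\xymatrix{
\widetilde X \ar[r]^{\widetilde f} \ar[d]_{\pi_X} & \widetilde Y \ar[d]^{\pi_Y} \\
X \ar[r]_{f} & Y.
}
\]
This square exists regardless of whether $f(X)$ meets $Y_{\reg}$, so there is no need to treat the case $f(X) \subseteq Y_{\sing}$ separately.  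One then argues exactly as you do in your Case~1: the form $\dK \pi_Y(\sigma)$ vanishes on $\widetilde Y$, hence $(\dK \pi_X)(\dK f(\sigma)) = (\dK \widetilde f)(\dK \pi_Y(\sigma)) = 0$, and injectivity of $\dK \pi_X$ (from dominance of $\pi_X$ and torsion-freeness of $\Omega^p_X$) finishes the proof.

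What your approach buys is a more explicit geometric picture of how torsion forms restrict to the singular locus: your inductive step in Case~2 actually proves the auxiliary statement that $\dK \iota_W(\sigma)$ is itself torsion on $W = Y_{\sing}$, which is interesting in its own right.  The cost is the machinery of log resolutions, the need to produce an exceptional component dominating each $W_j$, and the induction.  The paper's fibre-product trick sidesteps all of this, and also makes the reduction to irreducible $Y$ (which you omit) a natural first step rather than something to be handled implicitly by resolving components separately.
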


\begin{rem}
  The assertion of Proposition~\ref{prop:pull-back-TFD1} is clearly true if $X$
  is irreducible and $f(X)$ intersects the smooth locus of $Y$.  We claim that
  Proposition~\ref{prop:pull-back-TFD1} also holds in cases where $f(X)$ is
  entirely contained in the singular locus.  One particularly relevant case is
  the inclusion of the singular locus, say $X := Y_{\sing} \into Y$.  If
  $Y_{\sing}$ is itself smooth, then Proposition~\ref{prop:pull-back-TFD1}
  asserts that the pull-back of any torsion differential vanishes.
  \PreprintAndPublication{A simple case of this configuration is shown in
    Figure~\vref{fig:restTorDiff}.}{}
\end{rem}

\PreprintAndPublication{
\begin{figure}
  \centering

  $$
  \qquad X = \{ (x,y,z) ∈ \bC^3 : z^3+y^2-x^2z^2 = 0\}
  $$
  \includegraphics[width=8.0cm]{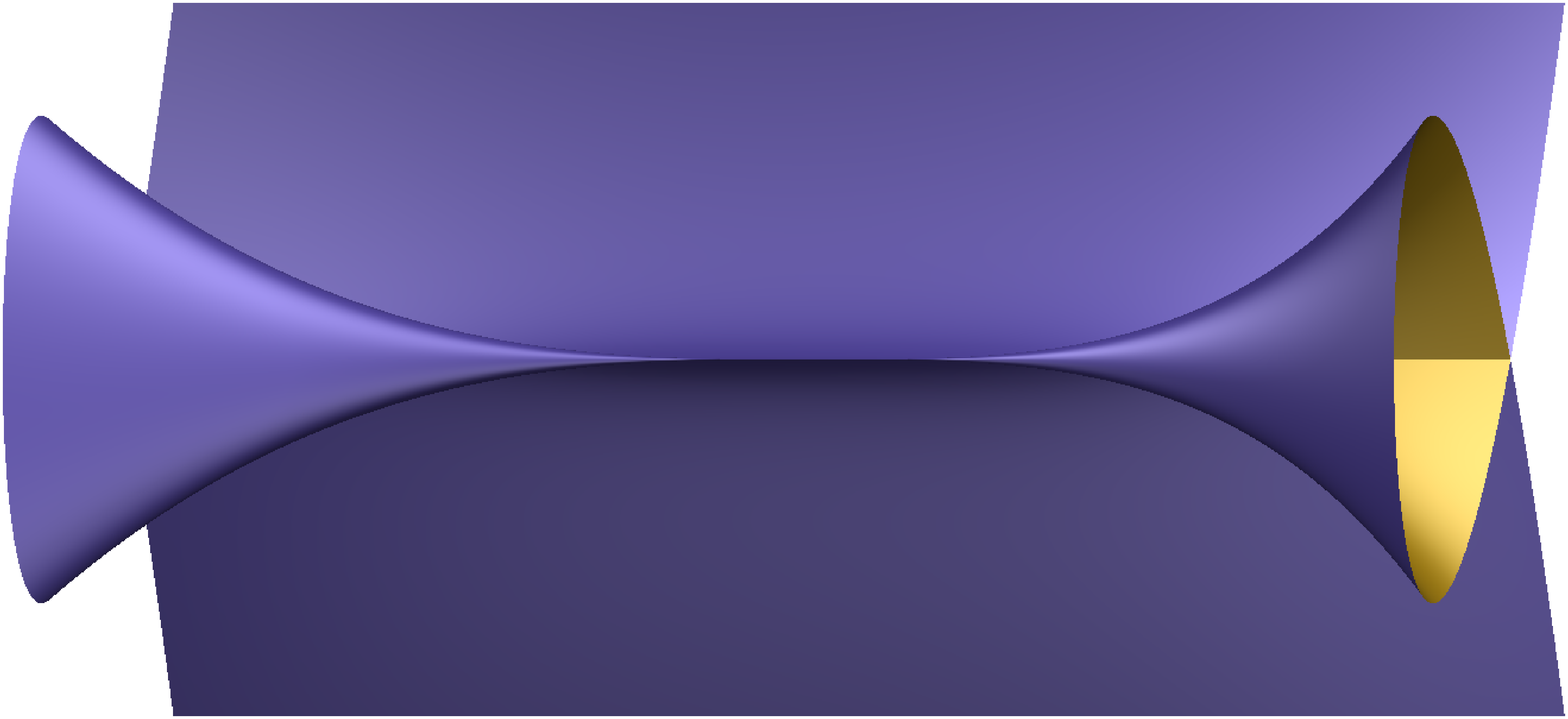}

  \bigskip

  {\small The figure sketches a special case of
    Proposition~\ref{prop:pull-back-TFD1}.  Here, it can be shown by elementary
    computation that the restriction of any torsion differential on $X$ to the
    singular locus $X_{\sing} = \{ z=y=0 \}$ vanishes.  To be more precise, if
    $ι: X_{\sing} → X$ denotes the inclusion map, then $(\dK ι)|_{\tor Ω^p_X} =
    0$.}

  \caption{Restriction of torsion differentials to the singular locus}
  \label{fig:restTorDiff}
\end{figure}
}{}

\begin{warning}[Proposition~\ref{prop:pull-back-TFD1} is wrong in the relative setup]
  The proof of Proposition~\ref{prop:pull-back-TFD1} relies on the existence of
  a desingularisation map for which no analogue exists in the relative setting.
  As a matter of fact, Proposition~\ref{prop:pull-back-TFD1} becomes wrong when
  working with relative differentials, unless one makes rather strong additional
  assumptions.  For a simple example, consider a sequence of morphisms,
  $$
  \xymatrix{ %
    X \ar[rrrr]^{f}_{\text{inclusion of except.  curve}} &&&& Y \ar[rrrr]^{π}_{\text{blow-up of smooth surface}} &&&& Z.
  }
  $$
  A simple computation shows that $Ω^1_{Y/Z} = \tor Ω^1_{Y/Z} = f_* Ω^1_X$.  In
  particular, if $U ⊂ Y$ is any open set and $σ ∈ H^0 \bigl( U,\, Ω^1_{Y/Z}
  \bigr)$ any non-vanishing differential on $U$, then $σ$ is torsion on $U$, but
  $\dK f(σ)$ is a torsion-free differential on the curve $f^{-1}(U)$.
\end{warning}

Before giving a proof of Proposition~\ref{prop:pull-back-TFD1} in
Section~\ref{ssec:pfpbTFD1}, we note a number of corollaries which will be
relevant in the further discussion.  Among these, the existence of a pull-back
map for torsion-free differentials is the most important.

\begin{cor}[Pull-back for sheaves of torsion-free differentials]\label{cor:pull-back-TFD1}
  Let $f : X → Y$ be a morphism of reduced, quasi-projective schemes.  Then
  there exist unique morphisms $\dtor f$ and $\dnoTor f$ such that the following
  diagram with exact rows becomes commutative
  \begin{equation}\label{eq:sqQ}
    \begin{split}
      \xymatrix{ %
        & f^* \tor Ω^p_{Y} \ar[rr]^{f^* α_{Y} } \ar[d]_{\dtor f} && f^* Ω^p_{Y} \ar[rr]^{f^* β_{Y}} \ar[d]_{\dK f} && f^* \widecheck{Ω}^p_{Y} \ar[r] \ar[d]^{\dnoTor} & 0 \\
        0 \ar[r] & \tor Ω^p_{X} \ar[rr]_{α_{X}} && Ω^p_{X} \ar[rr]_{β_{X}} && \widecheck{Ω}^p_{X} \ar[r] & 0.
      }
    \end{split}
  \end{equation}
\end{cor}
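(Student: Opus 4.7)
The plan is to construct $\dtor f$ first, using Proposition~\ref{prop:pull-back-TFD1} as the only non-formal ingredient, and then to obtain $\dnoTor f$ from the universal property of the cokernel.

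To build $\dtor f$, I would show that the composition $f^* \tor Ω^p_Y \xrightarrow{f^* α_Y} f^* Ω^p_Y \xrightarrow{\dK f} Ω^p_X$ factors through the subsheaf $\tor Ω^p_X \subseteq Ω^p_X$. This is local on $X$, so it suffices to check it on affine opens $V \subseteq X$ whose image in $Y$ lies in some affine open $U \subseteq Y$. A local section of $f^* \tor Ω^p_Y$ over $V$ has the form $\sum g_i \otimes τ_i$ with $g_i ∈ \sO_X(V)$ and $τ_i ∈ \tor Ω^p_Y(U)$, and the composition sends it to $\sum g_i \cdot \dK f(τ_i)$. Proposition~\ref{prop:pull-back-TFD1}, applied to the restricted morphism $V → U$ (both quasi-projective, since open subschemes of quasi-projective schemes are quasi-projective), ensures that each $\dK f(τ_i)$ lies in $\tor Ω^p_X(V)$; multiplying by $g_i$ and summing keeps us inside the $\sO_X$-submodule $\tor Ω^p_X$. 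The resulting lift to $\tor Ω^p_X$ is unique because $α_X$ is injective, and it assembles into the desired morphism $\dtor f$.

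The map $\dnoTor f$ is then forced by a diagram chase. Since $β_X ◦ \dK f ◦ f^* α_Y = β_X ◦ α_X ◦ \dtor f = 0$, the composition $β_X ◦ \dK f$ annihilates the image of $f^* α_Y$. Right-exactness of $f^*$ identifies $f^* \widecheck{Ω}^p_Y$ with the cokernel of $f^* α_Y$, so the universal property of the cokernel produces a unique morphism $\dnoTor f : f^* \widecheck{Ω}^p_Y → \widecheck{Ω}^p_X$ making the right-hand square commute. The same uniqueness can be read off from the surjectivity of $f^* β_Y$.

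The step that carries genuine mathematical content is Proposition~\ref{prop:pull-back-TFD1}; once that proposition is in hand, this corollary is a formal consequence, and the main obstacle is already absorbed into its proof. The only thing to verify carefully is that the proposition applies sheaf-locally, but this is immediate because it is stated for arbitrary morphisms of reduced quasi-projective schemes, and affine opens of such schemes are themselves of this kind.
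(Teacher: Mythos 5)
Your proposal is correct and follows essentially the same route as the paper: use Proposition~\ref{prop:pull-back-TFD1} to factor $(\dK f)|_{f^*\tor Ω^p_Y}$ through $\tor Ω^p_X$ (the paper calls this immediate; you spell out the local $\sO_X$-linear-combination argument, which is the right justification), then obtain $\dnoTor f$ and its uniqueness from the surjectivity of $f^*β_Y$ and the vanishing $β_X ◦ (\dK f) ◦ f^*α_Y = 0$. No gaps.
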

\begin{proof}
  Proposition~\ref{prop:pull-back-TFD1} immediately implies that the restricted
  morphism $(\dK f)|_{f^*\tor Ω^p_{Y}}$ factorises via the torsion subsheaf
  $\tor Ω^p_{X}$; this is clearly the unique morphism $\dtor f$ that makes the
  left square of \eqref{eq:sqQ} commutative.  Existence and uniqueness of
  $\dnoTor f$ follows from surjectivity of $f^* β_{Y}$ once we note that
  $$
  β_{X} ◦ (\dK f) ◦ f^* α_{Y} = 0.
  $$
  This finishes the proof of Corollary~\ref{cor:pull-back-TFD1}.
\end{proof}

The morphism $f^* β_{Y}$ in \eqref{eq:sqQ} is surjective.  This has a number of
immediate consequences.  We mention two which will be relevant in the further
discussion.

\begin{lem}\label{lem:dandcheckd}
  In the setting of Corollary~\ref{cor:pull-back-TFD1}, if $x ∈ X$ is any
  point such where $\dK f$ vanishes, then $\dnoTor f$ vanishes as well.  In other
  words, 
  $$
  (\dK f)|_{\{x\}} = 0 \,\, \Rightarrow \,\, (\dnoTor f)|_{\{x\}} = 0.  \makeatletter\displaymath@qed
  $$
\end{lem}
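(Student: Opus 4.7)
The plan is to exploit the surjectivity of $f^*β_Y$ noted in the text, which is preserved under restriction to a point since tensor product (hence restriction to the fibre $\{x\}$) is right-exact. Concretely, I would restrict the commutative square on the right of Diagram~\eqref{eq:sqQ} to the point $x$ and chase the resulting diagram.

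In detail, first I would observe that restricting the right-hand square of \eqref{eq:sqQ} to $\{x\}$ yields a commutative square
$$
\xymatrix{
f^* Ω^p_Y|_{\{x\}} \ar[rr]^{(f^* β_Y)|_{\{x\}}} \ar[d]_{(\dK f)|_{\{x\}}} && f^* \widecheck{Ω}^p_Y|_{\{x\}} \ar[d]^{(\dnoTor f)|_{\{x\}}} \\
Ω^p_X|_{\{x\}} \ar[rr]_{β_X|_{\{x\}}} && \widecheck{Ω}^p_X|_{\{x\}},
}
$$
in which the top horizontal arrow is still surjective: the top row of \eqref{eq:sqQ} is right-exact and remains right-exact after tensoring with the residue field at $x$.

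Next, using the hypothesis $(\dK f)|_{\{x\}} = 0$, commutativity of the above square gives
$$
(\dnoTor f)|_{\{x\}} ◦ (f^* β_Y)|_{\{x\}} \;=\; β_X|_{\{x\}} ◦ (\dK f)|_{\{x\}} \;=\; 0.
$$
Finally, since $(f^* β_Y)|_{\{x\}}$ is surjective, this forces $(\dnoTor f)|_{\{x\}} = 0$, which is the desired conclusion.

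There is no real obstacle here; the only point that needs care is the justification that $(f^* β_Y)|_{\{x\}}$ stays surjective after restriction, but this is immediate from right-exactness of the tensor product with the residue field at $x$. The statement is essentially a formal consequence of Corollary~\ref{cor:pull-back-TFD1} together with the right-exactness of the top row of \eqref{eq:sqQ}.
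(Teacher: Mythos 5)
Your argument is correct and is precisely the one the paper intends: the lemma is stated there with no written proof, as an ``immediate consequence'' of the surjectivity of $f^* β_Y$ in Diagram~\eqref{eq:sqQ}, which is exactly the diagram chase you carry out after restricting to $\{x\}$. The one point you flag --- that surjectivity survives restriction to the fibre by right-exactness of the tensor product --- is the correct justification, so there is nothing to add.
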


\begin{lem}[Composition law for pull-back of sheaves of torsion-free differentials]\label{lem:clTFD}
  In the setting of Corollary~\ref{cor:pull-back-TFD1}, let $g : X' → X$ be
  any morphism from a reduced, quasi-projective scheme $X'$.  Then
  \begin{equation}\label{eq:clTFD}
    \dnoTor (f ◦ g) = (\dnoTor g) ◦ g^*(\dnoTor f).
  \end{equation}
\end{lem}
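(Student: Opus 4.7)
The plan is to invoke the uniqueness statement of Corollary~\ref{cor:pull-back-TFD1} applied to the composition $f \circ g$, together with the composition law $\dK(f \circ g) = \dK g \circ g^*(\dK f)$ for Kähler differentials. Concretely, $\dnoTor(f \circ g)$ is characterised as the \emph{unique} morphism $\varphi \colon (f \circ g)^* \widecheck{Ω}^p_Y \to \widecheck{Ω}^p_{X'}$ making the right square of \eqref{eq:sqQ} commute for $f \circ g$, that is, satisfying
$$
β_{X'} ◦ \dK(f \circ g) = \varphi ◦ (f \circ g)^* β_Y.
$$
Uniqueness follows because $(f \circ g)^* β_Y$ is surjective; this in turn is a consequence of the right-exactness of $(f \circ g)^*$ applied to the surjection $β_Y$.

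The next step is to verify that $(\dnoTor g) ◦ g^*(\dnoTor f)$ satisfies the same defining equation. Pulling back the right square of \eqref{eq:sqQ} for $f$ by $g$ gives
$$
g^* β_X ◦ g^*(\dK f) = g^*(\dnoTor f) ◦ (f \circ g)^* β_Y,
$$
while the right square of \eqref{eq:sqQ} for $g$ itself reads $β_{X'} ◦ \dK g = (\dnoTor g) ◦ g^* β_X$. Stringing these together and using the Kähler composition law yields
\begin{align*}
  (\dnoTor g) ◦ g^*(\dnoTor f) ◦ (f \circ g)^* β_Y
    &= (\dnoTor g) ◦ g^* β_X ◦ g^*(\dK f) \\
    &= β_{X'} ◦ \dK g ◦ g^*(\dK f) \\
    &= β_{X'} ◦ \dK(f \circ g).
\end{align*}
The uniqueness clause of Corollary~\ref{cor:pull-back-TFD1} then forces $(\dnoTor g) ◦ g^*(\dnoTor f) = \dnoTor(f \circ g)$, which is the claim.

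The argument is a straightforward diagram chase, and I do not anticipate any real obstacle; the only thing one must be careful about is bookkeeping the various pullbacks (distinguishing $g^* f^*$ from $(f \circ g)^*$, and keeping track of where $β$ is being applied). The conceptual point is that, because $\dnoTor$ is determined by a universal property with respect to $\dK$, any identity that holds on the level of Kähler differentials automatically descends to $\widecheck{Ω}^p$; the composition law is the simplest instance of this principle.
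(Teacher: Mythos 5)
Your proposal is correct and follows essentially the same route as the paper: both arguments splice the $g$-pull-back of the commutative square for $f$ onto the square for $g$, invoke the Kähler composition law $\dK(f ◦ g) = (\dK g) ◦ g^*(\dK f)$, and conclude by the surjectivity of $g^* f^* β_Y$, which is exactly the uniqueness clause of Corollary~\ref{cor:pull-back-TFD1} for the composed morphism. The bookkeeping of $g^* f^*$ versus $(f ◦ g)^*$ that you flag is indeed the only point requiring care, and you handle it correctly.
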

\begin{proof}
  Consider the commutative diagram of sheaf morphisms,
  \begin{equation}\label{eq:cd-comp}
    \begin{split}
      \xymatrix{ %
        g^* f^* Ω^p_{Y} \ar[rr]^{g^* (\dK f)} \ar[d]_{g^* f^* β_{Y}} &&
        g^* Ω^p_{X} \ar[rr]^{\dK g} \ar[d]^{g^* β_{X}} &&
        Ω^p_{X'} \ar[d]^{β_{X'}} \\
        g^* f^* \widecheck{Ω}^p_{Y} \ar[rr]_{g^* (\dnoTor f)} &&
        g^* \widecheck{Ω}^p_{X} \ar[rr]_{\dnoTor g} &&
        \widecheck{Ω}^p_{X'}
      }
    \end{split}
  \end{equation}
  obtained by composing the right commutative square of Diagram~\eqref{eq:sqQ}
  for the morphism $g$ with the $g$-pull-back of the commutative square for the
  morphism $f$.  \PreprintAndPublication{Using the composition law for the
    pull-back of Kähler differentials,
    $$
    \dK (f ◦ g) = (\dK g) ◦ g^* (\dK f),
    $$
    the outer square of~\eqref{eq:cd-comp} is thus written as
    \begin{equation}\label{eq:cd-comp2}
      \begin{split}
        \xymatrix{ %
          g^* f^* Ω^p_{Y} \ar[rrr]^{\dK(f ◦ g)} \ar[d]_{g^* f^* β_{Y}} &&&
          Ω^p_{X'} \ar[d]^{β_{X'}} \\
          g^* f^* \widecheck{Ω}^p_{Y} \ar[rrr]_{(\dnoTor g) ◦ g^* (\dnoTor f)} &&&
          \widecheck{Ω}^p_{X'}
        }
      \end{split}
    \end{equation}
    As the pull-back of a surjective sheaf morphism, the map $g^* f^* β_{Y}$
    is surjective itself.  A comparison of Diagram~\eqref{eq:cd-comp2} with the
    right square of Diagram~\eqref{eq:sqQ} for the composed morphism $f ◦ g$
    thus immediately shows Equation~\eqref{eq:clTFD}, as claimed.  }{Using the
    composition law for the pull-back of Kähler differentials and surjectivity
    of $g^* f^* β_{Y}$, the claim follows from a comparison of the outer
    square of~\eqref{eq:cd-comp} with the right square of \eqref{eq:sqQ} for the
    composed morphism $f ◦ g$.}
\end{proof}

\begin{notation}[Pull-back for globally defined torsion-free differentials]
  In the setting of Corollary~\ref{cor:pull-back-TFD1}, the sheaf morphism
  $\dnoTor f$ induces a morphism between vector spaces of globally defined
  torsion-free forms, which we will again denote by $\dnoTor f : H^0 \bigl(Y,\,
  \widecheck{Ω}^p_Y \bigr) → H^0 \bigl(X,\, \widecheck{Ω}^p_X \bigr)$.
\end{notation}

\begin{rem}[Composition law for globally defined torsion-free differentials]
  In the setting of Corollary~\ref{cor:pull-back-TFD1}, given given a further
  morphism $g : X' → X$, Lemma~\ref{lem:clTFD} implies
  \PreprintAndPublication{that the following diagram is commutative,
    $$
    \xymatrix{ %
      H^0 \bigl(Y,\, \widecheck{Ω}^p_Y \bigr) \ar[rr]_{\dnoTor f}
      \ar@/^5mm/[rrrr]^{\dnoTor (f◦ g)} && H^0 \bigl(X,\, \widecheck{Ω}^p_X
      \bigr) \ar[rr]_{\dnoTor g} && H^0 \bigl(X',\, \widecheck{Ω}^p_{X'} \bigr).  }
    $$
  }{the following composition law for pull-back morphisms of globally defined
    forms, $\dnoTor (f◦ g) = \dnoTor g ◦ \dnoTor f$.}
\end{rem}

\subsection{Proof of Proposition~\ref*{prop:pull-back-TFD1}}
\label{ssec:pfpbTFD1}

We are grateful to Fritz Hörmann for help with the following proof.  We maintain
notation and assumptions of Proposition~\ref{prop:pull-back-TFD1} and assume
that $σ$ is a torsion form on $Y$.

\subsubsection{Simplifications}

Recall from Remark~\ref{gen:charTFD} or Explanation~\ref{expl:torgenpts} that to
show that $\dK f(σ)$ is a torsion differential, it suffices to show that it
vanishes at general points of each irreducible component of $X$.  This simple
observation has two important consequences.

First consequence: considering one component of $X$ at a time, we are free to
assume without loss of generality that $X$ is irreducible.  Replacing $X$ by a
suitable dense open subset, we can even assume the following.

\begin{asswlog}\label{ass:211}
  The scheme $X$ is irreducible and smooth.
\end{asswlog}

Second consequence: Let $Y_0 ⊆ Y$ be an irreducible component that contains the
image $f(X)$.  The morphism $f$ factors via the inclusion map,
$$
\xymatrix{
  X \ar[rr]_{f_0} \ar@/^3mm/[rrrr]^{f} && Y_0 \ar[rr]_{ι_0} && Y,
}
$$
and $\dK f(σ) = (\dK f_0) \bigl(\dK ι_0(σ) \bigr)$.  Since $σ$ is torsion,
$\dKι_0(σ)$ vanishes at general points of $Y_0$, and is therefore a torsion form
on $Y_0$.  Proposition~\ref{prop:pull-back-TFD1} will therefore follow for our
given morphism $f$ if we can prove it for $f_0$.  We can therefore assume
without loss of generality that the following holds.

\begin{asswlog}\label{ass:212}
  The scheme $Y$ is irreducible.   
\end{asswlog}

\subsubsection{End of proof}

Let $π_Y : \wtilde Y → Y$ be a resolution of singularities which exists because
$Y$ is irreducible.  Choose a component of $\wtilde X ⊆ X ⨯_Y \wtilde Y$ that
surjects onto $X$, and let $\wtilde X$ be a desingularisation of that component.
We obtain a commutative diagram,
$$
\xymatrix{ %
  \wtilde X \ar[r]^{\wtilde f} \ar[d]_{π_X} & \wtilde Y \ar[d]^{π_Y} \\
  X \ar[r]_{f} & Y.
}
$$

Since $σ$ vanishes at general points of $Y$ and since $π_Y$ is birational, it is
clear that the pull-back $\dKπ_Y(σ)$ vanishes at general points of $\wtilde Y$.
But since $\wtilde Y$ is smooth by Assumption~\ref{ass:211}, this means that it
vanishes everywhere, \CounterStep
\begin{equation}\label{eq:aa}
  \dK π_Y(σ) = 0 ∈ H^0\bigl( \wtilde Y,\, Ω^p_{\wtilde Y}\bigr).
\end{equation}

Since $Ω_X$ is torsion-free and $π_X$ is generically smooth, we see that the
pull-back morphism $\dK π_X$ of globally defined forms is in fact injective.  It
follows that $\dK f(σ) = 0$ if and only if $(\dK π_X) \bigl( \dK f(σ)\bigr) =
0$.  This last pull-back is easily computed to be zero,
$$
(\dK π_X) \bigl( \dK f(σ)\bigr) = \left(\dK \wtilde f\right) \bigl(\underbrace{\dK π_Y(σ)}_{\mathclap{=0 \text{ by~\eqref{eq:aa}}}}\bigr) = 0.
$$
This ends the proof of Proposition~\ref{prop:pull-back-TFD1}.  \qed

\section{Torsion-free differentials in the relative snc setting}

In the course of the proof of our main theorem, we will frequently need to
consider klt spaces $X$, strong resolution maps $π : \wtilde X → X$, and discuss
torsion-free differentials on the exceptional set $E$, which is an snc divisor
embedded into the smooth space $\wtilde X$.  This section gathers several
elementary facts about the sheaf $\widecheck{Ω}^p_E$ which are needed in the
discussion.  Most of the material here will be known to experts.  We have
nonetheless chosen to include full proofs for lack of an adequate reference.

\subsection{Relatively snc divisors and associated differentials}

\subsubsection{SNC divisors}

To fix notation used later, we recall the definition and basic properties of snc
pairs.

\begin{defn}[\protect{SNC pairs \cite[0.4(8)]{KM98}}]\label{def:everythinglog2a}
  Let $X$ be a normal, quasi-projective variety and $D$ an effective Weil
  divisor on $X$.  Given a point $x ∈ X$, we say that the pair $(X, D)$ is
  \emph{snc at $x$} if there exists a Zariski-open neighbourhood $U$ of $x$ such
  that $U$ is smooth and such that $\supp(D) ∩ U$ is either empty, or a divisor
  with simple normal crossings.  The pair $(X, D)$ is called \emph{snc} if it is
  snc at every point of $X$.

  Given a pair $(X,D)$, let $(X,D)_{\reg}$ be the maximal open set of $X$ where
  $(X,D)$ is snc, and let $(X,D)_{\sing}$ be its complement, with the induced
  reduced subscheme structure.
\end{defn}

The following notation and remark can be used to give an alternate definition of
``snc pair''.  This will be used later to define snc in the relative setting.

\begin{notation}[Intersection of boundary components]\label{not:DI}
  Let $(X,D)$ be a pair, where the boundary divisor $D$ is written as a sum of
  its irreducible components $D = α_1 D_1 + \ldots + α_n D_n$.   If $I
  ⊆ \{1, \ldots, n\}$ is any non-empty subset, we consider the
  scheme-theoretic intersection $D_I := ∩_{i ∈ I} \supp D_i$.  If $I$ is
  empty, set $D_I := X$.
\end{notation}

\begin{rem}[Description of snc pairs]\label{rem:descrSNC}
  In the setup of Notation~\ref{not:DI}, it is clear that the pair $(X,D)$ is
  snc if and only if all $D_I$ are smooth and of codimension equal to the number
  of defining equations, $\codim_XD_I=|I|$ for all $I$ where $D_I \not =
  \emptyset$.
\end{rem}

\subsubsection{SNC morphisms}

The notion of relatively snc divisors has been used in the literature, but its
definition has not been discussed much.  For the reader's convenience, we
reproduce the definition given in \cite[Sect.~2.B]{GKKP11}.

\begin{defn}[\protect{SNC morphism, relatively snc divisor, \cite[Def.~2.1]{VZ02}}]\label{def:sncMorphism}
  If $(X,D)$ is an snc pair and $φ: X → T$ a surjective morphism to a smooth
  variety, we say that $D$ is \emph{relatively snc}, or that $φ$ is \emph{an snc
    morphism of the pair} $(X,D)$ if for any set $I$ with $D_I \not = \emptyset$
  all restricted morphisms $φ|_{D_I} : D_I → T$ are smooth of relative dimension
  $\dim X-\dim T -|I|$.
\end{defn}

\begin{rem}[Geometric description of snc morphisms]\label{rem:sncmor}
  In the setting of Definition~\ref{def:sncMorphism}, assume that $φ: X → T$ is
  an snc morphism of the pair $(X,D)$.  Then, since smooth morphisms are open,
  each of the sets $D_I$ dominates $X$.

  If $t ∈ T$ is any point, set $X_t := φ^{-1}(t)$ and $D_t := D ∩ X_t$.  Then
  $X_t$ is smooth and $(X_t, D_t)$ is an snc pair.
\end{rem}

\begin{example}[SNC morphisms]\label{ex:sncmor}
  \PreprintAndPublication{The morphism $ψ_{α}$, shown in Figure~\vref{fig:TSWAH}
    is a prototypical example of an snc morphism.}{} If $(X,D)$ is an snc pair,
  then the identity map $\Id_X : X → X$ is an snc morphism if and only if $D =
  0$.  Again assuming that $(X,D)$ is an snc pair, the constant map from $X$ to
  a point is always an snc morphism.
\end{example}

Assume we are given an pair $(X, D)$ and a point $x ∈ X$ such that $(X, D)$ is
snc at $x$.  It is well-known that there exists a neighbourhood $U = U(x)$, open
in the analytic topology, and holomorphic coordinates $x_1, \ldots, x_n ∈
\sO_X(U)$ such that $\supp D ∩ U = \{ x_1\cdots x_r = 0 \}$, for a suitable
number $0 ≤ r ≤ n$.  The following is the relative analogue of this fact.

\begin{lemDef}[Adapted coordinates for an snc morphism]\label{lemDef:apdapt}
  Let $(X,D)$ be an snc pair and $φ: X → T$ an snc morphism of the pair $(X,D)$.
  If $x ∈ X$ is any point, then there exist neighbourhoods $V = V\bigl( φ(x)
  \bigr) ⊆ T$ and $U = U(x) ⊆ φ^{-1}(V) ⊆ X$ open in the analytic topology, and
  holomorphic coordinates $x_1, \ldots, x_n ∈ \sO_X(U)$, $y_1, \ldots, y_m ∈
  \sO_Y(V)$, centred about $x$ respectively $φ(x)$, and a number $0 ≤ r ≤ n-m$
  such that the following holds.
  \begin{enumerate}
  \item We have $x_i = y_i ◦ φ$ for all indices $1 ≤ i ≤ m$, and
  \item the support of $D$ is given as $\supp D ∩ U = \{ x_{m+1}\cdots x_{m+r} =
    0 \}$.
  \end{enumerate}
  In the setting above, we call the coordinates $x_{\bullet}$ and $y_{\bullet}$
  \emph{adapted coordinates for the snc morphism $φ$}.  \qed
\end{lemDef}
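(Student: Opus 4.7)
The plan is to build the adapted coordinates in three stages: first choose coordinates downstairs on $T$, then pull them back to $X$, then adjoin local defining equations for the components of $D$ passing through $x$, and finally complete the resulting functions to a full analytic coordinate system on $X$.

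Since $T$ is smooth of dimension $m$, one can choose holomorphic coordinates $y_1, \ldots, y_m$ on a neighbourhood $V ⊆ T$ of $φ(x)$, centred at $φ(x)$. Let $D_1, \ldots, D_r$ be the (finitely many) irreducible components of $D$ containing $x$; by shrinking the analytic neighbourhood of $x$ in $X$ we may assume that no other component of $D$ meets this neighbourhood. Applying Remark~\ref{rem:descrSNC} to each singleton $I = \{i\}$, each $D_i$ is smooth at $x$, hence admits a holomorphic defining equation $f_i$ in a neighbourhood of $x$ with $df_i(x) \neq 0$.

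The key point is to verify that the differentials $d(y_1 ◦ φ)(x), \ldots, d(y_m ◦ φ)(x), df_1(x), \ldots, df_r(x)$ are linearly independent in $T^*_x X$. For this, invoke Definition~\ref{def:sncMorphism} twice. Taking $I = \emptyset$ gives $D_I = X$, so $φ$ itself is smooth of relative dimension $n - m$ at $x$; consequently the $d(y_i ◦ φ)(x)$ are linearly independent. Taking $I = \{1, \ldots, r\}$ gives that $D_I = D_1 ∩ \cdots ∩ D_r$ is smooth over $T$ of relative dimension $n - m - r$ at $x$, so the scheme-theoretic fibre $φ^{-1}(φ(x)) ∩ D_I$ has dimension exactly $n - m - r$ in a neighbourhood of $x$. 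Since this fibre is locally cut out in $X$ by the $m + r$ functions $y_1 ◦ φ, \ldots, y_m ◦ φ, f_1, \ldots, f_r$, a dimension count forces their differentials at $x$ to be linearly independent, as claimed.

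The analytic implicit function theorem then allows one to complete $y_1 ◦ φ, \ldots, y_m ◦ φ, f_1, \ldots, f_r$ to a full system of holomorphic coordinates $x_1, \ldots, x_n$ on a sufficiently small analytic neighbourhood $U$ of $x$ contained in $φ^{-1}(V)$, by setting $x_i := y_i ◦ φ$ for $1 ≤ i ≤ m$ and $x_{m+i} := f_i$ for $1 ≤ i ≤ r$. After further shrinking $U$ to exclude the components of $D$ not passing through $x$, the support of $D$ in $U$ is exactly $\{x_{m+1} \cdots x_{m+r} = 0\}$. The main technical obstacle is the linear-independence step above; everything else reduces to routine applications of the analytic implicit function theorem and careful shrinking of the neighbourhoods.
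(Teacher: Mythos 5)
The paper states this Lemma-and-Definition without proof, treating it as the routine relative analogue of the existence of snc coordinates, so there is no argument in the paper to compare against; your overall architecture (pull back coordinates from $T$, adjoin local defining equations of the components of $D$ through $x$, complete to a chart) is the right one. However, the justification you give for the step you yourself single out as the key point does not work as stated. Knowing that the fibre $\phi^{-1}(\phi(x)) \cap D_I$ has the expected dimension $n-m-r$ and is cut out set-theoretically by the $m+r$ functions $y_1\circ\phi,\dots,y_m\circ\phi,f_1,\dots,f_r$ does \emph{not} force the differentials of those functions to be linearly independent at $x$. For example, in $\bC^2$ with coordinates $(u,v)$, the functions $u$ and $u+v^2$ each have nonvanishing differential at the origin and cut out the single point $\{(0,0)\}$, which has the expected dimension $0$, yet their differentials at the origin coincide. (In that example the projection $\{u+v^2=0\}\to\bC$, $(u,v)\mapsto u$, is ramified at the origin, i.e.\ fails to be a smooth morphism --- which is exactly the hypothesis your dimension count discards.)

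The fix is to use the full strength of smoothness of $\phi|_{D_I}$ rather than only the dimension of its fibre: smoothness at $x$ means that $d\phi_x$ restricted to $T_xD_I$ is surjective onto $T_{\phi(x)}T$, with kernel of dimension $n-m-r$. Since the absolute snc condition gives that the $df_i(x)$ are independent and $T_xD_I=\bigcap_i \ker df_i(x)$ is of codimension $r$, one computes
$$
\dim\Bigl(\ker d\phi_x \;\cap\; \bigcap_{i=1}^r \ker df_i(x)\Bigr) \;=\; n-m-r,
$$
so the linear map $T_xX\to\bC^{m+r}$ with components $d(y_1\circ\phi)(x),\dots,d(y_m\circ\phi)(x),df_1(x),\dots,df_r(x)$ has rank $m+r$, which is the desired independence. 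With this replacement the rest of your argument (completion to a chart via the inverse function theorem, shrinking $U$ to discard the components of $D$ avoiding $x$, and $r\le n-m$ because $D_I\neq\emptyset$) goes through.
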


\subsection{Characterisation of torsion-free differentials}

Given an snc pair $(X,D)$, the following two propositions describe the sheaf of
torsion-free differentials on $D$ by relating them to the sheaves of
differentials on each component of $D$ and to logarithmic differentials on $X$,
respectively.  These descriptions will later be used in the discussion of
relative torsion-free differentials.

\begin{prop}[\protect{Torsion-free differentials on snc divisors, I}]\label{prop:NamiTFD0}
  Let $(X,D)$ be an snc pair where $D$ is reduced, with irreducible components
  $D = ∪_i D_i$.  Let $φ : X → Y$ be a morphism such that $D$ is relatively snc
  over $Y$.  Given any number $p$, consider the inclusion maps $ι_i : D_i → D$
  and the natural morphisms
  $$
  ψ_i: Ω^p_{D/Y} → (ι_i)_* Ω^p_{D_i/Y} \quad \text{and} \quad
  ψ = \oplus ψ_i: Ω^p_{D/Y} → \bigoplus_i (ι_i)_* Ω^p_{D_i/Y}.
  $$
  Then the image of $ψ$ is naturally isomorphic to the sheaf of torsion-free
  differentials on $D$, that is, $\Image(ψ) \cong \widecheck{Ω}^p_{D/Y}$.
\end{prop}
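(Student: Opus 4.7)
The plan is to reduce the statement to the identification $\ker(\psi) = \tor\, Ω^p_{D/Y}$, after which the first isomorphism theorem immediately yields
$$
\Image(\psi) \;\cong\; Ω^p_{D/Y}\bigl/\ker(\psi) \;=\; Ω^p_{D/Y}\bigl/\tor\, Ω^p_{D/Y} \;=\; \widecheck{Ω}^p_{D/Y}.
$$
Both inclusions will be proved by testing at the generic points $η_i$ of the irreducible components $D_i$, appealing to the characterisation of torsion sections from Remark~\ref{gen:charTFD} (and Explanation~\ref{expl:torgenpts} in the appendix): a section of a coherent sheaf on $D$ is torsion if and only if it vanishes at every $η_i$.

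For the inclusion $\tor\, Ω^p_{D/Y} \subseteq \ker(\psi)$, let $σ$ be a torsion section. By assumption $D$ is relatively snc over $Y$, so Remark~\ref{rem:sncmor} (applied with $I = \{i\}$) implies that every component $D_i$ is smooth over $Y$. Hence $Ω^p_{D_i/Y}$ is locally free on the irreducible scheme $D_i$, and in particular torsion-free. The section $ψ_i(σ) ∈ Ω^p_{D_i/Y}$ is obtained by restriction and therefore vanishes at the generic point $η_i$; a generically vanishing section of a torsion-free sheaf on an irreducible scheme is identically zero, so $ψ_i(σ) = 0$ for every $i$, giving $σ ∈ \ker(ψ)$.

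For the reverse inclusion $\ker(ψ) ⊆ \tor\, Ω^p_{D/Y}$, the key local observation is that at a generic point $η_i$ of $D_i$ no other component $D_j$ passes through $η_i$, so on a suitable open neighbourhood of $η_i$ inside $D$ the subscheme $D$ coincides with $D_i$. Consequently $\sO_{D,η_i} = \sO_{D_i,η_i} = K(D_i)$, and the natural map $ψ_i$ — induced by the map of Kähler differentials attached to the closed immersion $ι_i$ — is an isomorphism on stalks at $η_i$. Therefore, if $σ ∈ \ker(ψ)$, then $σ_{η_i} = 0$ for every $i$, and Remark~\ref{gen:charTFD} concludes that $σ$ is torsion.

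The only nontrivial point is verifying that each $ψ_i$ really is an isomorphism at $η_i$; once one notes that $D$ and $D_i$ have the same local ring at $η_i$ (a consequence of reducedness together with the fact that distinct irreducible components share no generic point), this is automatic. The snc hypothesis enters the argument only through the smoothness of each $D_i$ over $Y$ needed in the first inclusion; the second inclusion requires no smoothness assumption at all.
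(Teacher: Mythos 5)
Your proof is correct and is essentially the paper's own argument: both reduce the claim to the identity $\ker ψ = \tor Ω^p_{D/Y}$, obtaining $\tor Ω^p_{D/Y} ⊆ \ker ψ$ from torsion-freeness of the targets $(ι_i)_*Ω^p_{D_i/Y}$ (which rests on smoothness of $D_i → Y$, exactly as in your first inclusion) and the reverse inclusion from the fact that $ψ$ is an isomorphism near the generic points of the components. The paper merely packages these two steps through the appendix lemmas (Propositions~\ref{prop:pftf}, \ref{prop:UPTf} and \ref{prop:ijtf}) rather than arguing directly on stalks at the $η_i$, so the difference is purely presentational.
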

\begin{proof}
  Recall from Proposition~\ref{prop:pftf} that the push-forward sheaves $(ι_i)_*
  Ω^p_{D_i/Y}$ are torsion-free sheaves on the reducible space $D$.  Since
  subsheaves of torsion-free sheaves are torsion-free, Corollary~\ref{cor:sstf},
  it follows that $\Image(ψ)$ is itself torsion-free.  The universal property of
  torsion-freeness, Proposition~\ref{prop:UPTf}, therefore gives a factorisation
  of $ψ$ as follows:
  $$
  \xymatrix{ %
    Ω^p_{D/Y} \ar@{->>}@/^0.4cm/[rrrr]^{ψ} \ar@{->>}[rr] && \widecheck{Ω}^p_{D/Y}
    \ar@{->>}[rr]_(.4){\widecheck{ψ}} && \Image(ψ) \ar@{^(->}[r] & \bigoplus_i (ι_i)_* Ω^p_{D_i/Y}.   }
  $$
  The finish the proof, it suffices to show that $\widecheck{ψ}$ is injective.
  That, however, follows from Proposition~\ref{prop:ijtf} because
  $\widecheck{ψ}$ is generically injective: as a matter of fact, both $ψ$ and
  $\widecheck{ψ}$ are isomorphic away from the singular set of $D$.
\end{proof}

In the absolute case, the following statement appears without proof in
\cite[p.~129]{Namikawa01}.

\begin{prop}[\protect{Torsion-free differentials on snc divisors, II}]\label{prop:NamiTFD}
  In the setup of Proposition~\ref{prop:NamiTFD0}, there exists a short exact
  sequence\footnote{The sheaf $\widecheck{Ω}^p_{D/Y}$ of
    Sequence~\eqref{eq:NamiTFD} should be seen as a sheaf on $X$,
    cf.~Notation~\vref{not:sdos}.}
  \begin{equation}\label{eq:NamiTFD}
    0 \xrightarrow{\qquad} Ω^p_{X/Y}(\log D) \otimes \sJ_D \xrightarrow{\quad A_{X/Y}^p \quad} 
    Ω^p_{X/Y} \xrightarrow{\quad B^p_{X/Y} \quad} \widecheck{Ω}^p_{D/Y} \xrightarrow{\qquad} 0.
  \end{equation}
\end{prop}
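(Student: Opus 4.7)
The plan is to construct $A^p_{X/Y}$ and $B^p_{X/Y}$ from first principles, verify injectivity of the former and surjectivity of the latter by general arguments, and prove exactness in the middle by an explicit local computation in adapted coordinates.

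First I would define $A^p_{X/Y}$ as the natural multiplication map $\sJ_D \otimes Ω^p_{X/Y}(\log D) \to Ω^p_{X/Y}$, $f \otimes \alpha \mapsto f\alpha$, which is well-defined because any section of $\sJ_D$ clears the logarithmic poles along $D$. Both source and target are locally free of the same rank on the smooth scheme $X$ (since $\sJ_D$ is invertible), and $A^p_{X/Y}$ restricts to an isomorphism on $X \setminus \supp D$; hence it is injective. The map $B^p_{X/Y}$ is the composition of the relative conormal surjection $Ω^p_{X/Y} \twoheadrightarrow Ω^p_{D/Y}$ with the canonical surjection $Ω^p_{D/Y} \twoheadrightarrow \widecheck{Ω}^p_{D/Y}$ that kills the torsion subsheaf, cf.~Sequence~\eqref{eq:torSeq}, and is itself surjective as a composition of two surjections.

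For exactness at $Ω^p_{X/Y}$, I would argue locally. Fix a point of $X$ and apply Lemma-Definition~\ref{lemDef:apdapt} to obtain adapted coordinates $x_1, \ldots, x_n$ with $x_i = y_i \circ φ$ for $i \leq m$ and $\supp D = \{x_{m+1} \cdots x_{m+r} = 0\}$, the irreducible components being $D_i = \{x_{m+i} = 0\}$. In these coordinates $Ω^p_{X/Y}$ is free on the basis $\{dx_I : I \subseteq \{m+1, \ldots, n\},\, |I| = p\}$. Using Proposition~\ref{prop:NamiTFD0} to identify $\widecheck{Ω}^p_{D/Y}$ with the image of $Ω^p_{D/Y}$ in $\bigoplus_i Ω^p_{D_i/Y}$, one checks that a form $\omega = \sum_I a_I\, dx_I$ lies in $\ker B^p_{X/Y}$ exactly when $a_I|_{D_i} = 0$ for every $i \leq r$ with $m+i \notin I$. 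Since $x_{m+1}, \ldots, x_{m+r}$ is part of a regular system of parameters in $\sO_{X,x}$, the ideals $(x_{m+i})$ are pairwise coprime and their intersection equals their product, so this condition becomes $a_I \in \bigl(\prod_{j \in \{m+1, \ldots, m+r\} \setminus I} x_j\bigr)$. A direct expansion of $x_{m+1} \cdots x_{m+r}$ times a logarithmic basis form shows that $\Image A^p_{X/Y}$ is locally generated by exactly the same forms $\bigl(\prod_{j \in \{m+1, \ldots, m+r\} \setminus I} x_j\bigr)\, dx_I$, yielding $\Image A^p_{X/Y} = \ker B^p_{X/Y}$.

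The main obstacle is matching these two descriptions. The image of $A^p_{X/Y}$ is a priori generated by many monomials $\bigl(\prod_{j \in \{m+1, \ldots, m+r\} \setminus K} x_j\bigr)\, dx_I$ indexed by subsets $K \subseteq I \cap \{m+1, \ldots, m+r\}$, while $\ker B^p_{X/Y}$ imposes only a single minimal divisibility condition per multi-index $I$. The observation that each of these monomials is an $\sO_X$-multiple of the minimal one, corresponding to $K = I \cap \{m+1, \ldots, m+r\}$, reconciles the two descriptions; once this is in hand, the remainder of the argument is purely formal.
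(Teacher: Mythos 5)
Your proposal is correct and follows the same opening reduction as the paper: both identify $\widecheck{\Omega}^p_{D/Y}$ with the image of $\psi$ in $\bigoplus_i (\iota_i)_*\Omega^p_{D_i/Y}$ via Proposition~\ref{prop:NamiTFD0}, so that the whole statement reduces to showing $\ker\bigl(\psi\circ r\bigr)=\Omega^p_{X/Y}(\log D)\otimes\sJ_D$ inside $\Omega^p_{X/Y}$. Where you diverge is in how this kernel is identified. The paper writes $\ker\eta=\bigcap_i\ker\eta_i$, quotes the single-component formula $\ker\eta_i=\Omega^p_{X/Y}(\log D_i)\otimes\sJ_{D_i}$ (an adapted-coordinate computation, cf.\ \cite[Prop.~2.3(c)]{EV92}), and then sidesteps any computation at points of $D_{\sing}$ by observing that the intersection of reflexive subsheaves is reflexive and agrees with $\Omega^p_{X/Y}(\log D)\otimes\sJ_D$ off the codimension-$\geq 2$ set $D_{\sing}$. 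You instead carry out the full multi-component computation at every point, matching the free generators $\bigl(\prod_{j\in L\setminus I}x_j\bigr)\,dx_I$ of $\Image A^p_{X/Y}$ (where $L=\{m+1,\dots,m+r\}$) against the divisibility conditions cutting out $\ker B^p_{X/Y}$; the local avatar of the paper's reflexivity argument is then your use of the identity $\bigcap_{j}(x_j)=\bigl(\prod_j x_j\bigr)$ for distinct members of a regular system of parameters, valid since the (analytic) local ring is a regular local ring, hence a UFD. Your route is more explicit and self-contained, at the cost of a slightly heavier coordinate calculation; the paper's is shorter given that the one-component case is available as a citation. Both arguments are complete, and your remaining steps (injectivity of $A^p_{X/Y}$ from generic bijectivity of a map of locally free sheaves, surjectivity of $B^p_{X/Y}$ as a composition of the conormal surjection with the quotient by torsion) agree with what the paper implicitly uses.
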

\begin{proof}
  Viewing $Ω^p_{D/Y}$ and $Ω^p_{D_i/Y}$ as sheaves on $X$,
  Proposition~\ref{prop:NamiTFD0} asserts that to prove
  Proposition~\ref{prop:NamiTFD}, it suffices to show that $Ω^p_{X/Y}(\log D)
  \otimes \sJ_D$ is exactly the kernel of the composed map
  $$
  \xymatrix{ %
    Ω^p_{X/Y} \ar[r]_r \ar@/^4mm/[rr]^{η} & Ω^p_{D/Y} \ar[r]_(.4){ψ}
    & \bigoplus_i Ω^p_{D_i/Y}.   }
  $$
  Setting $η_i := ψ_i ◦ r : Ω^p_{X/Y} → Ω^p_{D_i/Y}$, the kernel of $η$ is then
  this intersection of the kernels of all the $η_i$, that is,
  \begin{equation}\label{eq:ttz}
    \ker η = \bigcap_i \ker η_i ⊂ Ω^p_{X/Y}.     
  \end{equation}
  The kernels of the $η_i$ are well understood.  As a matter of fact, an
  elementary computation in adapted coordinates shows\footnote{In the absolute
    case, this description of $\ker η_i$ is also found in \cite[Prop.~2.3(c) on
    p.~13]{EV92}.} that
  \begin{equation}\label{eq:ttw}
    \ker η_i = Ω^p_{X/Y}(\log D_i) \otimes \sJ_{D_i}.
  \end{equation}
  In particular, we see that $\ker η_i$ are locally free subsheaves of
  $Ω^p_{X/Y}$.  Since the intersection of two reflexive subsheaves is reflexive,
  Equations~\eqref{eq:ttz} and \eqref{eq:ttw} show that $\ker η$ is a reflexive
  sheaf on $X$, and isomorphic to $Ω^p_{X/Y}(\log D) \otimes \sJ_D$ on the open
  set $X \setminus D_{\sing}$.  Since $\codim_X D_{\sing} ≥ 2$, and since two
  reflexive sheaves agree if and only if they agree on the complement of a small
  set, we obtain that $\ker η \cong Ω^p_{X/Y}(\log D) \otimes \sJ_D$ on all of
  $X$, thus finishing the proof of Proposition~\ref{prop:NamiTFD}.
\end{proof}

Given a smooth morphism $X → Y$, it is well-known that the restriction of
$Ω^p_{X/Y}$ to any fibre $F$ equals $Ω^p_F$.  Using the descriptions given
above, we show that the same holds for torsion-free differentials in the
relative snc setting.

\begin{cor}[Restriction to fibres]\label{cor:ntorrest}
  In the setup of Proposition~\ref{prop:NamiTFD0}, let $y ∈ Y$ be a smooth point
  of the image $φ(D)$, and consider the preimage $D_y := (φ|_D)^{-1}(y)$.  Then
  \begin{equation}\label{eq:recO}
    \widecheck{Ω}^p_{D/Y}\bigl|_{D_y} \cong \widecheck{Ω}^p_{D_y}.
  \end{equation}
\end{cor}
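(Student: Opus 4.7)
The plan is to deduce \eqref{eq:recO} by restricting the exact sequence of Proposition~\ref{prop:NamiTFD} to the fibre $X_y$ and matching the result with the absolute version of that sequence applied to the snc pair $(X_y, D_y)$.

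First, I would observe that $\varphi: X → Y$ is smooth, as this is the case $I = \emptyset$ of Definition~\ref{def:sncMorphism}, and that the pair $(X_y, D_y)$ is itself snc by Remark~\ref{rem:sncmor}. Proposition~\ref{prop:NamiTFD} applied to $(X_y, D_y)$ (with the structural morphism to a point) yields the absolute exact sequence
$$
0 → Ω^p_{X_y}(\log D_y) \otimes \sJ_{D_y} → Ω^p_{X_y} → \widecheck{Ω}^p_{D_y} → 0.
$$

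Second, working in the adapted coordinates furnished by Lemma~\ref{lemDef:apdapt}, I would establish three natural identifications on restriction to $X_y$: namely $Ω^p_{X/Y}|_{X_y} \cong Ω^p_{X_y}$ (from smoothness of $\varphi$), $Ω^p_{X/Y}(\log D)|_{X_y} \cong Ω^p_{X_y}(\log D_y)$ (explicit, using only the fibre coordinates $x_{m+1}, \dots, x_n$), and $\sJ_D|_{X_y} \cong \sJ_{D_y}$ (since $D$ is locally cut out by the single equation $x_{m+1}\cdots x_{m+r}$ in the fibre variables, hence flat over $Y$). A direct local check in the same coordinates identifies the map $A^p_{X/Y}$ from \eqref{eq:NamiTFD} with the corresponding map $A^p_{X_y}$ of the absolute sequence.

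Third, I would argue that restricting the sequence \eqref{eq:NamiTFD} to $X_y$ preserves exactness. Right-exactness is automatic, so the issue reduces to the vanishing of $\operatorname{Tor}_1^{\sO_X}(\widecheck{Ω}^p_{D/Y},\, \sO_{X_y})$. This I would verify by writing $\widecheck{Ω}^p_{D/Y}$ in adapted coordinates as a direct sum of cyclic modules of the form $\sO_X/(\text{product of fibre coordinates})$, noting that each such summand is manifestly flat over $\sO_Y$. Comparing the restricted sequence with the absolute one under the identifications of the previous step, the uniqueness of cokernels gives $\widecheck{Ω}^p_{D/Y}|_{X_y} \cong \widecheck{Ω}^p_{D_y}$. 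Since $\widecheck{Ω}^p_{D/Y}$ is a sheaf of $\sO_D$-modules, its restriction to $X_y$ coincides with its restriction to $D_y = D ∩ X_y$, yielding \eqref{eq:recO}.

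The main obstacle is the flatness check underlying the $\operatorname{Tor}$-vanishing in the third step. Although routine in local adapted coordinates, it requires some careful bookkeeping of the cokernel structure of $\widecheck{Ω}^p_{D/Y}$ and of how each summand behaves under restriction; a slicker alternative would be to invoke Proposition~\ref{prop:NamiTFD0} and check directly that the image of $ψ$ commutes with base change, but the two routes amount to essentially the same local computation.
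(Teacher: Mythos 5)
Your proposal is correct and follows essentially the same route as the paper: restrict Sequence~\eqref{eq:NamiTFD} to the fibre $X_y$ in the adapted coordinates of Lemma and Definition~\ref{lemDef:apdapt}, check that exactness is preserved and that the first two terms become $Ω^p_{X_y}(\log D_y) \otimes \sJ_{D_y}$ and $Ω^p_{X_y}$, and then identify the cokernel with $\widecheck{Ω}^p_{D_y}$ by applying Proposition~\ref{prop:NamiTFD} to the snc pair $(X_y, D_y)$ with the constant map to a point. You merely spell out the ``elementary computation'' (the term identifications and the Tor-vanishing/flatness check) that the paper leaves to the reader.
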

\begin{proof}
  Set $X_y := φ^{-1}(y)$ and $D_y := D ∩ X_y$, and recall from
  Remark~\ref{rem:sncmor} that $(X_y, D_y)$ is an snc pair.  Observe that to
  prove Corollary~\ref{cor:ntorrest}, it suffices to show
  Equation~\eqref{eq:recO} in the neighbourhood of any given point $x ∈ D_y$.
  Given one such $x$, choose open neighbourhoods $U = U(x)$ and $V = V(y)$ and
  adapted coordinates $x_1, \ldots x_n$ and $y_1, \ldots, y_m$ as in Lemma and
  Definition~\ref{lemDef:apdapt}.  Using these coordinates, an elementary
  computation shows that the restriction of Sequence~\eqref{eq:NamiTFD} to the
  fibre $X_y$ stays exact, and that the terms of the restricted sequence are
  identified as follows,
  \begin{equation}\label{eq:restNamSeq}
    0 \longrightarrow \underbrace{\bigl( Ω^p_{X/Y}(\log D) \otimes \sJ_D \bigr)
      \bigl|_{X_y}}_{= Ω^p_{X_y}(\log D_y) \otimes \sJ_{D_y}} \longrightarrow
    \underbrace{Ω^p_{X/Y} \bigl|_{X_y}}_{= Ω^p_{X_y}} \longrightarrow
    \widecheck{Ω}^p_{D/Y} \bigl|_{X_y} \longrightarrow 0.
  \end{equation}
  Recall from Example~\ref{ex:sncmor} that the constant map from $X_y$ to a
  point is an snc morphism of the pair $(X_y, D_y)$.  An application of
  Proposition~\ref{prop:NamiTFD} therefore identifies the cokernel of
  Sequence~\eqref{eq:restNamSeq} as $\widecheck{Ω}^p_{D_y}$, finishing the
  proof.
\end{proof}

\subsection{Filtrations for torsion-free differentials in the relative snc setting}

Given a smooth morphism $X → Y$, the sequence of relative differentials on $X$
induces a canonical filtration on the sheaf $Ω^p_X$.  The following proposition
shows that the same statement holds for torsion-free differentials in the
relative snc setting.

\begin{prop}[Filtration of relative torsion-free differentials for an snc morphism]\label{prop:TFDrelFilt}
  Let $(X,E)$ be an snc pair, where $E$ is a reduced divisor on $X$.  Let $φ : X
  → Y$ a morphism such that $E$ is relatively snc over $Y$.  If $\:0 ≤ p ≤ \dim
  X-1$ is any number, then there exists a filtration
  $$
  \widecheck{Ω}^p_E = \widecheck \sF^0 ⊇ \widecheck \sF^1 ⊇ \cdots ⊇ \widecheck \sF^p ⊇ \widecheck \sF^{p+1} = 0
  $$
  and for all $0 ≤ r ≤ p$ sequences
  \begin{equation}\label{eq:kfgo}
  0 → \widecheck \sF^{r+1} → \widecheck \sF^r → (φ|_E)^* Ω^r_Y \otimes \widecheck{Ω}^{p-r}_{E/Y} → 0.
  \end{equation}
\end{prop}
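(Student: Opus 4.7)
The plan is to obtain the filtration by pushing forward the standard Koszul-type filtration of $Ω^p_X$ (induced by the relative differential sequence) through the surjection $q: Ω^p_X → \widecheck{Ω}^p_E$ supplied by Proposition~\ref{prop:NamiTFD}, and then to identify graded pieces via a second application of the same proposition in the relative setting.

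\textbf{Setup.} The relative differential sequences
$$
0 → φ^* Ω^1_Y → Ω^1_X → Ω^1_{X/Y} → 0 \qquad \text{and} \qquad 0 → φ^* Ω^1_Y → Ω^1_X(\log E) → Ω^1_{X/Y}(\log E) → 0
$$
are short exact sequences of locally free $\sO_X$-modules: both claims are local, and in the adapted coordinates of Lemma~\ref{lemDef:apdapt} each sequence visibly splits according to whether the coordinates $x_i$ pull back from the base. Taking $p$-th wedge powers yields descending Koszul filtrations $\sF^\bullet$ on $Ω^p_X$ and $\sH^\bullet$ on $Ω^p_X(\log E)$ with
$$
\sF^r/\sF^{r+1} \cong φ^* Ω^r_Y \otimes Ω^{p-r}_{X/Y}, \qquad \sH^r/\sH^{r+1} \cong φ^* Ω^r_Y \otimes Ω^{p-r}_{X/Y}(\log E).
$$
Since $\sJ_E$ is invertible, tensoring preserves exactness and $\sH^\bullet \otimes \sJ_E$ is a filtration of $K := Ω^p_X(\log E) \otimes \sJ_E$. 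The inclusion $K \hookrightarrow Ω^p_X$ visibly respects the filtrations, giving $\sH^r \otimes \sJ_E ⊆ \sF^r \cap K$ for every $r$.

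\textbf{Construction and reduction.} By Proposition~\ref{prop:NamiTFD} applied to the absolute setting, $q : Ω^p_X → \widecheck{Ω}^p_E$ is surjective with kernel $K$. Define $\widecheck{\sF}^r := q(\sF^r)$. This is a decreasing filtration with $\widecheck{\sF}^0 = \widecheck{Ω}^p_E$ and $\widecheck{\sF}^{p+1} = 0$. A standard diagram chase yields
$$
\widecheck{\sF}^r / \widecheck{\sF}^{r+1} \cong \sF^r \big/ \bigl( \sF^{r+1} + \sF^r \cap K \bigr).
$$
The entire graded-piece computation therefore reduces to the following key claim, which I expect to be the main obstacle:

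\emph{Claim.} For every $r$, the inclusion $\sH^r \otimes \sJ_E ⊆ \sF^r \cap K$ is an equality.

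I would prove the claim by direct inspection in adapted coordinates, where both sides admit transparent free generating sets built from wedge products of the $dx_i$ and the $dx_i/x_i$. The content of the claim is that an element of $K$ whose expansion happens to live in $\sF^r$ (i.e.\ uses at least $r$ factors pulled back from the base) already admits a representation with at least $r$ logarithmic or regular base factors, hence lies in $\sH^r \otimes \sJ_E$. The verification is elementary but requires careful bookkeeping of which $x_{m+i}$ factors from $\sJ_E$ are absorbed by which logarithmic poles.

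\textbf{Identification of graded pieces.} Granting the claim, the image of $\sF^r \cap K = \sH^r \otimes \sJ_E$ inside $\sF^r/\sF^{r+1} \cong φ^* Ω^r_Y \otimes Ω^{p-r}_{X/Y}$ is precisely $φ^* Ω^r_Y \otimes \bigl( Ω^{p-r}_{X/Y}(\log E) \otimes \sJ_E \bigr)$. Therefore
$$
\widecheck{\sF}^r / \widecheck{\sF}^{r+1} \cong φ^* Ω^r_Y \otimes \frac{Ω^{p-r}_{X/Y}}{Ω^{p-r}_{X/Y}(\log E) \otimes \sJ_E} \cong φ^* Ω^r_Y \otimes \widecheck{Ω}^{p-r}_{E/Y},
$$
where the last isomorphism is exactly Proposition~\ref{prop:NamiTFD} applied to the relative morphism $φ$. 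As the right-hand side is annihilated by $\sJ_E$, the pullback $φ^* Ω^r_Y$ descends to $(φ|_E)^* Ω^r_Y$, yielding the short exact sequences~\eqref{eq:kfgo} as required.
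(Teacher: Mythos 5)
Your proposal is correct and follows essentially the same route as the paper: push the Koszul filtration $\sF^\bullet$ of $Ω^p_X$ through the surjection $Ω^p_X → \widecheck{Ω}^p_E$ of Proposition~\ref{prop:NamiTFD}, identify the induced filtration on the kernel by a computation in adapted coordinates, and recognise the graded pieces via the relative instance of Sequence~\eqref{eq:NamiTFD}. The only cosmetic difference is that the paper defines the log filtration directly as the preimage $(A^p)^{-1}(\sF^r)$ and computes its graded pieces in coordinates, whereas you define it as the log-Koszul filtration tensored with $\sJ_E$ and verify in coordinates that it equals $\sF^r \cap K$ — the same bookkeeping, packaged as your ``Claim''.
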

\begin{proof}
  Since $φ$ is smooth, the sequence of relative differentials on $X$ is a
  short exact sequence of locally free sheaves on $X$,
  $$
  0 → φ^* Ω^1_Y → Ω^1_X → Ω^1_{X/Y} → 0.
  $$
  Following \cite[II.5~Ex~5.16]{Ha77}, there exists an induced filtration
  $$
  Ω^p_X =\ \sF^0 ⊇ \sF^1 ⊇ \cdots ⊇ \sF^p ⊇
  \sF^{p+1} = 0
  $$
  and sequences for all $0 ≤ r ≤ p$,
  $$
  0 → \sF^{r+1} → \sF^r → φ^* Ω^r_Y \otimes Ω^{p-r}_{X/Y} → 0.
  $$
  Recalling Sequence~\eqref{eq:NamiTFD} of Proposition~\ref{prop:NamiTFD}, we
  define filtrations of $Ω^p_{X/Y}(\log E) \otimes \sJ_E$ and
  $\widecheck{Ω}^p_{E/Y}$ by setting
  \begin{align*}
    \sF^r(\log) & := (A_{X/Y}^p)^{-1}\bigl(\sF^r\bigr) && \text{\ldots\ filtration of $Ω^p_{X/Y}(\log E) \otimes \sJ_E$, and}  \\
    \widecheck \sF^r & := B^p_{X/Y}\bigl(\sF^r\bigr)      && \text{\ldots\ filtration of $\widecheck{Ω}^p_{E/Y}$.}
  \end{align*}
  With these definitions, it is clear that $\widecheck \sF^r \cong \sF^r /
  \sF^r(\log)$, for all indices $r$.  We aim to understand the sheaves
  $\widecheck \sF^r$ in more detail.  An explicit computation in adapted
  coordinates, which we leave to the reader, reveals two facts.  First, there is
  an isomorphism
  $$
  \factor \sF^r(\log).\sF^{r+1}(\log).  \,\,\cong\,\, φ^*Ω^r_Y \, \otimes
  \, Ω^{p-r}_{X/Y}(\log E) \otimes \sJ_E.
  $$
  Secondly, one obtains the description of the natural map between quotients
  given in the following commutative diagram
  \begin{equation}\label{eq:Bg1}
    \begin{split}
      \xymatrix{ %
        \factor \sF^r(\log).\sF^{r+1}(\log).  \ar[rrr]^(.55){q}_(.55){\text{natl.  map between quotients}} \ar[d]_{\cong} &&& \factor \sF^r.\sF^{r+1}.  \ar[d]^{\cong} \\
        φ^*Ω^r_Y \otimes \left( Ω^{p-r}_{X/Y}(\log E) \otimes \sJ_E \right) \ar[rrr]^(0.6){{\rm Id}_{φ^*Ω^r_Y} \otimes A^{p-r}_{X/Y}}
        &&& φ^*Ω^r_Y \otimes Ω^{p-r}_{X/Y}.  }    
    \end{split}
  \end{equation}
  In particular, since $A^{p-r}_{X/Y}$ is injective and since $φ^*Ω^r_Y$ is
  locally free on $X$, it follows that $q$ is injective.  The Snake Lemma thus
  yields \PreprintAndPublication{a commutative diagram of coherent sheaves on
    $X$ as follows,
    \begin{equation}\label{eq:Bg2}
      \begin{split}
        \xymatrix{ %
          0 \ar[r] & \sF^{r+1}(\log) \ar[r] \ar@{^{(}->}[d]^{A^p_{X/Y}|_{\sF^{r+1}(\log)}} & \sF^r(\log) \ar[r] \ar@{^{(}->}[d]^{A^p_{X/Y}|_{\sF^r(\log)}} & \factor \sF^r(\log).\sF^{r+1}(\log).  \ar[r] \ar@{^{(}->}[d]^{q} & 0\\
          0 \ar[r] & \sF^{r+1} \ar[r] \ar@{->>}[d] & \sF^r \ar[r] \ar@{->>}[d] & \factor \sF^r.\sF^{r+1}.  \ar[r] \ar@{->>}[d] & 0\\
          0 \ar[r] & \widecheck \sF^{r+1} \ar[r] & \widecheck \sF^r \ar[r] & \factor \widecheck \sF^r.\widecheck \sF^{r+1}.  \ar[r] & 0.\\
        }
      \end{split}
    \end{equation}
    In summary, we obtain the following description of the successive quotients in
    the filtration of $\widecheck{Ω}^p_E$, 
    \begin{align*}
      \factor \widecheck \sF^r.\widecheck \sF^{r+1}.   & \cong \factor \sF^r/\sF^{r+1} .  \sF^r(\log)/\sF^{r+1}(\log).  && \text{Diag.~\eqref{eq:Bg2}}\\
      & \cong \factor φ^*Ω^r_Y \otimes Ω^{p-r}_{X/Y} .  q\left(φ^*Ω^r_Y \otimes \left( Ω^{p-r}_{X/Y}(\log E) \otimes \sJ_E \right)\right).  && \text{Diag.~\eqref{eq:Bg1}}\\
      & \cong φ^*Ω^r_Y \otimes \left( \factor Ω^{p-r}_{X/Y} .  A^{p-r}_{X/Y}\left(Ω^{p-r}_{X/Y}(\log E) \otimes \sJ_E\right) .  \right) && \text{$φ^*Ω^r_Y$ loc.~free}\\
      & \cong  φ^*Ω^r_Y \otimes \widecheck{Ω}^{p-r}_{E/Y} && \text{Seq~\eqref{eq:NamiTFD}}\\
      & \cong  (φ|_E)^*Ω^r_Y \otimes \widecheck{Ω}^{p-r}_{E/Y}.
    \end{align*}}{the following description of the successive quotients in
    the filtration of $\widecheck{Ω}^p_E$, 
    \begin{align*}
      \factor \widecheck \sF^r.\widecheck \sF^{r+1}.   & \cong \factor \sF^r/\sF^{r+1} .  \sF^r(\log)/\sF^{r+1}(\log).  && \text{Snake Lemma}\\
      & \cong \factor φ^*Ω^r_Y \otimes Ω^{p-r}_{X/Y} .  q\left(φ^*Ω^r_Y \otimes \left( Ω^{p-r}_{X/Y}(\log E) \otimes \sJ_E \right)\right).  && \text{Diag.~\eqref{eq:Bg1}}\\
      & \cong φ^*Ω^r_Y \otimes \left( \factor Ω^{p-r}_{X/Y} .  A^{p-r}_{X/Y}\left(Ω^{p-r}_{X/Y}(\log E) \otimes \sJ_E\right) .  \right) && \text{$φ^*Ω^r_Y$ loc.~free}\\
      & \cong  φ^*Ω^r_Y \otimes \widecheck{Ω}^{p-r}_{E/Y} && \text{Seq~\eqref{eq:NamiTFD}}\\
      & \cong  (φ|_E)^*Ω^r_Y \otimes \widecheck{Ω}^{p-r}_{E/Y}.
    \end{align*}}
  This finishes the proof of Proposition~\ref{prop:TFDrelFilt}.
\end{proof}

\section{Torsion-free differentials on rationally chain connected spaces}
\label{sec:tfdrcc}

It is well-known that rationally chain connected compact manifolds do not admit
any differential forms.  Here, we show that the same holds for torsion-free
differentials on rationally chain connected varieties with arbitrary
singularities.

\begin{thm}[Torsion-free differentials on rationally chain connected spaces]\label{thm:diffRCspaces}
  Let $X$ be a reduced, projective scheme.  Assume that $X$ is rationally chain
  connected.  Then
  $$
  H^0 \bigl( X,\, \widecheck{Ω}^p_X \bigr) = 0, \quad \text{for all $0 < p ≤
    \dim X$.}
  $$
\end{thm}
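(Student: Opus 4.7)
The strategy is to reduce to the case of a projective scheme with simple normal crossing (snc) singularities, where Proposition~\ref{prop:NamiTFD0} identifies global torsion-free $p$-forms on $\tilde X = \bigcup \tilde X_i$ with tuples $(\omega_i)$, $\omega_i \in H^0(\tilde X_i, \Omega^p_{\tilde X_i})$, that agree on every pairwise intersection $\tilde X_i \cap \tilde X_j$. The elementary vanishing $H^0(\bP^1,\Omega^p_{\bP^1}) = 0$ for $p \ge 1$, propagated along rational chains and then across components via these compatibility conditions, will force such a tuple to be zero.

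For the reduction step, I would construct a proper birational modification $\pi\colon\tilde X\to X$ with $\tilde X$ reduced projective snc, each smooth component $\tilde X_i$ mapping birationally onto an irreducible component of $X$, and whose exceptional locus is arranged to consist of rationally chain connected pieces so that $\tilde X$ inherits the rational chain connectedness of $X$. By Corollary~\ref{cor:pull-back-TFD1} there is a pullback $\dnoTor\pi\colon H^0(X,\widecheck\Omega^p_X) \to H^0(\tilde X,\widecheck\Omega^p_{\tilde X})$. At a general smooth point $x$ of each component of $X$ the morphism $\pi$ is an isomorphism, so the fibre of $\dnoTor \pi$ at the preimage of $x$ is an isomorphism; hence if the pullback of a section $\sigma$ is zero, then $\sigma$ vanishes generically on every component of $X$ and is therefore zero by Remark~\ref{gen:charTFD}. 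Thus $\dnoTor \pi$ is injective on global sections, and it suffices to prove the result for $\tilde X$.

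In the snc case, any morphism $f\colon\bP^1\to\tilde X_i$ lying in a single component satisfies $f^*\omega_i\in H^0(\bP^1,\Omega^p_{\bP^1})=0$. Because $\tilde X$ is rationally chain connected, there exists a component $\tilde X_{i_0}$ on which the tangent directions of rational curves through a general point span the tangent space, forcing $\omega_{i_0}\equiv 0$. The compatibility condition then gives $\omega_j|_{\tilde X_{i_0}\cap \tilde X_j} = 0$ on every adjacent component $\tilde X_j$; I would propagate vanishing by induction along the (connected) dual graph of the snc decomposition. The principal obstacle is this propagation step: a component $\tilde X_j$ need not itself be rationally connected, so the vanishing of $f^*\omega_j$ along rational curves in $\tilde X_j$ only constrains $\omega_j$ to be pulled back from the base of the MRC fibration of $\tilde X_j$. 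One must argue that the intersection with an already-handled neighbouring component faithfully captures this MRC base, so that the vanishing of $\omega_j$ on the intersection forces $\omega_j=0$ globally; establishing this faithfulness using the rational chain structure of $\tilde X$ is the technical heart of the proof.
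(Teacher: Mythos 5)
There is a genuine gap, and it sits in your reduction step: rational chain connectedness is \emph{not} inherited by birational modifications, so the snc model $\tilde X$ you want to pass to generally does not exist, and even when it does the vanishing you need on it is simply false. The projective cone $X$ over an elliptic curve $C$ (Setting~\ref{setting:ex12}) is irreducible and rationally chain connected (every point joins the vertex along a ruling line), so your $\tilde X$ would have to be a resolution $\beta : \tilde X → X$; but $\tilde X$ is a $\bP^1$-bundle over $C$, is not rationally chain connected, and carries the nonzero form $\dK π(τ_C) ∈ H^0\bigl(\tilde X, Ω^1_{\tilde X}\bigr)$ pulled back from $C$. Injectivity of $\dnoTor\beta$ on global sections is fine, but it reduces the theorem to a false statement about $\tilde X$: the rational curves on $X$ that witness chain connectedness pass through the vertex and do not lift to curves on $\tilde X$ covering it, so no argument on the resolution can see them. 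This is exactly the phenomenon that makes the paper's subject delicate (compare Example~\ref{ex:nlc2}). Your second acknowledged gap --- that on a non-rationally-connected component the curves only force $\omega_j$ to come from the MRC base, and the intersection with a neighbouring component need not detect that base --- is likewise real and is not a technicality one can expect to patch; in the cone example there is no neighbouring component at all.

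The paper's proof avoids both problems by never resolving $X$. It uses the fact that the pull-back $\dnoTor f$ of torsion-free differentials is defined for \emph{arbitrary} morphisms of reduced schemes (Corollary~\ref{cor:pull-back-TFD1}), including maps into the singular locus, and satisfies the composition law (Lemma~\ref{lem:clTFD}). One parametrises the connecting chains by a map $μ : H ⨯ C → X$, where $C = C_1 ∪ \cdots ∪ C_k$ is an abstract nodal chain of $\bP^1$'s with $H ⨯ C_1$ dominating the given component and $μ|_{H ⨯ \{p_k\}}$ constant, and then runs a descending induction along the chain: on each $H ⨯ C_j$ the splitting $Ω^p_{H ⨯ C_j} \cong π_H^*Ω^p_H \oplus π_H^*Ω^{p-1}_H \otimes π_{C_j}^*Ω^1_{C_j}$ kills the second summand fibrewise by negativity of $Ω^1_{\bP^1}$, and triviality of the first summand on fibres propagates the vanishing from the marked point $p_j$ to all of $H ⨯ C_j$ and hence to $p_{j-1}$. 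If you want to salvage your approach, the essential missing idea is to work with the chains themselves (pulled back to a parameter space) rather than with a global snc model of $X$.
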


\begin{rem}
  In Theorem~\ref{thm:diffRCspaces}, we do not assume that $X$ is irreducible.
\end{rem}

\begin{warning}[Kähler differentials on rationally chain connected spaces]
  The statement of Theorem~\ref{thm:diffRCspaces} becomes wrong if one replaces
  torsion-free differentials with Kähler differentials.  For an example, let $X
  = X_1 ∪ X_2$ be the union of two distinct lines in $\bP^2$.  The reducible
  variety $X$ is then clearly rationally chain connected.  The sheaf $Ω^1_X$ of
  Kähler differentials contains a non-trivial torsion subsheaf, supported at the
  intersection point $X_1 ∩ X_2$.  As a skyscraper sheaf supported at a single
  point, $\tor Ω^1_X$, and hence $Ω^1_X$, do have non-trivial sections.

  Similar examples exist where $X$ is irreducible and rationally connected.  The
  paper \cite{GrebRollenske} discusses cones where $\tor Ω^1_X$ is non-trivial,
  and supported at the vertex point.
\end{warning}

\begin{warning}[Reflexive differentials on rationally chain connected spaces]
  The statement of Theorem~\ref{thm:diffRCspaces} becomes wrong if one replaces
  torsion-free differentials with reflexive differentials;
  Example~\vref{ex:nlc2} discusses a non-trivial reflexive form on the
  (rationally chain connected) cone over an elliptic curve.

  The main reason for failure Theorem~\ref{thm:diffRCspaces} in this setting is
  the fact that $X$ has log canonical rather than klt singularities.  For
  rationally chain connected spaces $X$ with klt singularities, it is shown in
  \cite[Thm.~5.1]{GKKP11} that $H^0 \bigl( X,\, Ω^{[p]}_X \bigr) = 0$ for all
  $p$.  For reflexive tensor operation other than $\bigwedge^{[p]}$, the
  question becomes rather subtle.  We refer to \cite[Sect.~3]{GKP11} for a
  discussion of known facts and for examples.
\end{warning}

The arguments used to prove of Theorem~\ref{thm:diffRCspaces} follow the
standard proof for the non-existence of Kähler differentials on rationally
connected manifolds.

\subsection{Proof of Theorem~\ref*{thm:diffRCspaces}}

Let $σ ∈ H^0 \bigl( X,\, \widecheck{Ω}^p_X \bigr)$ be any reflexive
differential.  By Remark~\ref{gen:charTFD}, it suffices to show that $σ$
vanishes at the general point of every irreducible component of $X$.  In other
words, given any irreducible component $X_0 ⊆ X$ with inclusion map $ι : X_0 →
X$, we need to show that $\dnoTor ι (σ) = 0$.

The assumption that $X$ is rationally chain connected implies that there exists
a fixed point $y ∈ X$, such that general points of $X_0$ can be connected to $y$
using a finite-length chain of rational curves.  The following is thus an
immediate consequence.

\begin{consequence}\label{cons:RCC}
  There exists a nodal chain of smooth rational curves $C = C_1 ∪ \cdots ∪ C_k$
  with marked points $p_i$ as as shown in Figure~\ref{fig:chain}, a smooth,
  irreducible, quasi-projective variety $H ⊂ \Hom(C,X)$ and a morphism $μ : H ⨯
  C → X$ with the following properties.
  \begin{enumerate}
  \item\label{il:bonny} The variety $H ⨯ C_1$ dominates the component $X_0$.
  \item\label{il:clyde} The restriction of $μ$ to $H ⨯ \{ p_k\}$ is constant.
    \qed
  \end{enumerate}
\end{consequence}

\begin{figure}
  \centering
  \begin{tikzpicture}
    \draw (0,0) to [out=35, in=145] (2.5,0);
    \fill (1.25,0.7) node {$C_1$};
    \draw (2,0) to [out=35, in=145] (4.5,0);
    \fill (3.25,0.7) node {$C_2$};
    \draw (6,0) to [out=35, in=145] (8.5,0);
    \fill (7.25,0.7) node {$C_{k-1}$};
    \draw (8,0) to [out=35, in=145] (10.5,0);
    \fill (9.25,0.7) node {$C_k$};

    \fill (2.25,0.15) circle (2pt);
    \fill (2.25,0) node[below] {$p_1$};
    \fill (4.25,0.15) circle (2pt);
    \fill (4.25,0) node[below] {$p_2$};
    \fill (6.25,0.15) circle (2pt);
    \fill (6.25,0) node[below] {$p_{k-2}$};
    \fill (8.25,0.15) circle (2pt);
    \fill (8.25,0) node[below] {$p_{k-1}$};
    \fill (10.25,0.15) circle (2pt);
    \fill (10.25,0) node[below] {$p_k$};
    \fill (5.25,0.3) node[below] {\ldots};
  \end{tikzpicture}
  
  \caption{A pointed chain of smooth rational curves, as used in the proof of Theorem~\ref{thm:diffRCspaces}}
  \label{fig:chain}
\end{figure}

With the notation introduced above, the following Lemma is key to the proof of
Theorem~\ref{thm:diffRCspaces}.

\begin{lem}\label{lem:XP}
  Setup as above.  For any index $1 ≤ j ≤ k$, we consider the following two
  restrictions of $μ$,
  $$
  μ_{p_j} : H ⨯ \{p_j\} → X \quad \text{and} \quad μ_{C_j} : H ⨯ C_j → X.
  $$
  Then the following two equations hold for all indices $1 ≤ j ≤ k$
  \begin{align}
    \label{eq:castor} \dnoTor  μ_{p_j}(σ) & = 0 ∈ H^0 \bigl( H ⨯ \{p_j\},\, Ω^p_{H ⨯ \{p_j\}} \bigr) && \text{and}\\
    \label{eq:pollux} \dnoTor  μ_{C_j}(σ) & = 0 ∈ H^0 \bigl( H ⨯ C_j,\, Ω^p_{H ⨯ C_j} \bigr)
  \end{align}
\end{lem}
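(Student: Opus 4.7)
I would prove both \eqref{eq:castor} and \eqref{eq:pollux} simultaneously by descending induction on $j$, with the pattern
$$
\eqref{eq:castor}_{j=k} \,\Longrightarrow\, \eqref{eq:pollux}_j \,\Longrightarrow\, \eqref{eq:castor}_{j-1} \,\Longrightarrow\, \eqref{eq:pollux}_{j-1} \,\Longrightarrow\, \cdots.
$$
The base case $j=k$ is immediate: by Consequence~\ref{cons:RCC}\eqref{il:clyde}, $\mu_{p_k}$ is a constant map, so $\dK\mu_{p_k}$ vanishes at every point of the smooth variety $H \times \{p_k\}$. Lemma~\ref{lem:dandcheckd} then gives pointwise vanishing of $\dnoTor\mu_{p_k}$, and since $\widecheck\Omega^p_{H \times \{p_k\}} = \Omega^p_{H \times \{p_k\}}$ is locally free, this means $\dnoTor\mu_{p_k}(\sigma) = 0$.

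For the step $\eqref{eq:castor}_j \Rightarrow \eqref{eq:pollux}_j$, the key observation is that $H \times C_j$ is smooth (as $H$ is smooth and $C_j \cong \mathbb{P}^1$), so torsion-free differentials agree with Kähler differentials there. Let $\pi_1 : H \times C_j \to H$ and $\pi_2 : H \times C_j \to C_j$ be the projections. Applying the standard filtration on $\Omega^p_{H \times C_j}$ associated to the smooth morphism $\pi_1$, the graded pieces are
$$
\pi_1^*\Omega^r_H \otimes \Omega^{p-r}_{H \times C_j/H} \,\cong\, \pi_1^*\Omega^r_H \otimes \pi_2^*\Omega^{p-r}_{C_j}.
$$
For $p-r \geq 2$ these vanish since $\Omega^{\geq 2}_{\mathbb{P}^1} = 0$, and for $p-r = 1$ the projection formula gives $\pi_{1*}\bigl( \pi_1^*\Omega^{p-1}_H \otimes \pi_2^*\Omega^1_{\mathbb{P}^1} \bigr) \cong \Omega^{p-1}_H \otimes H^0(\mathbb{P}^1,\Omega^1_{\mathbb{P}^1}) = 0$. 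Chasing through the long exact sequences in cohomology then shows that the inclusion $\pi_1^*\Omega^p_H \hookrightarrow \Omega^p_{H \times C_j}$ induces an isomorphism on global sections, i.e.\ $\dnoTor\mu_{C_j}(\sigma) = \pi_1^*\tau$ for some $\tau \in H^0(H,\Omega^p_H)$. Restricting to $H \times \{p_j\}$ and invoking the composition law of Lemma~\ref{lem:clTFD} identifies $\tau$ with $\dnoTor\mu_{p_j}(\sigma)$, which vanishes by the inductive hypothesis; hence $\dnoTor\mu_{C_j}(\sigma) = 0$.

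For the step $\eqref{eq:pollux}_j \Rightarrow \eqref{eq:castor}_{j-1}$ (when $j \geq 2$), observe that $p_{j-1}$ is the node joining $C_{j-1}$ to $C_j$, so in particular $p_{j-1} \in C_j$. The map $\mu_{p_{j-1}}$ therefore factors as $\mu_{C_j} \circ \iota$, where $\iota : H \times \{p_{j-1}\} \hookrightarrow H \times C_j$ is the inclusion. The composition law of Lemma~\ref{lem:clTFD} then immediately yields $\dnoTor\mu_{p_{j-1}}(\sigma) = 0$.

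\textbf{Main obstacle.} The inductive machinery is straightforward bookkeeping with the composition law; the only non-formal step is the filtration/cohomology argument showing $H^0(H \times \mathbb{P}^1,\Omega^p_{H \times \mathbb{P}^1}) = \pi_1^* H^0(H,\Omega^p_H)$. Quasi-projectivity of $H$ is not an obstruction because the projection formula for $\pi_1$ is applied only through the compact fibre $\mathbb{P}^1$, and vanishing of global $1$-forms on $\mathbb{P}^1$ does the rest.
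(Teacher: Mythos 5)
Your proposal is correct and follows essentially the same route as the paper: the identical descending induction (constant map at $p_k$ via Lemma~\ref{lem:dandcheckd}; the node $p_{j-1}\in C_j$ plus the composition law of Lemma~\ref{lem:clTFD} for the step back to \eqref{eq:castor}), and for the step $\eqref{eq:castor}_j\Rightarrow\eqref{eq:pollux}_j$ your filtration/projection-formula computation on $H\times\bP^1$ is just a repackaging of the paper's direct-sum splitting $Ω^p_{H⨯C_j}\cong π_H^*Ω^p_H\oplus π_H^*Ω^{p-1}_H\otimes π_{C_j}^*Ω^1_{C_j}$, where the second summand dies because $Ω^1_{\bP^1}\cong\sO_{\bP^1}(-2)$ has no sections on fibres and the first dies by restricting to the section $H⨯\{p_j\}$.
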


Assuming for a second that Lemma~\ref{lem:XP} holds, consider
Equation~\eqref{eq:pollux} for $j=1$.  Write $μ_{C_1}: H ⨯ C_1$ as a composition
$$
\xymatrix{ %
   H ⨯ C_1 \ar[rr]^{μ_{C_1,0}} && X_0 \ar[r]^{ι} & X
}
$$
Equation~\eqref{eq:pollux} and the composition law for pull-back of torsion-free
differentials, Lemma~\ref{lem:clTFD} give a vanishing of forms,
$$
0 = \dnoTor μ_{C_1} (σ) = \dnoTor μ_{C_1,0} \bigl( \dnoTor ι (σ) \bigr)
$$
Recalling from (\ref{cons:RCC}.\ref{il:bonny}) that $μ_{C_1,0}$ is dominant,
Equation~\eqref{eq:pollux} says that the restriction of $σ$ to the component
$X_0$ vanishes generically.  The required vanishing of $\dnoTor ι (σ)$ then an
immediate consequence.  Theorem~\ref{thm:diffRCspaces} is thus a consequence of
Lemma~\ref{lem:XP}.

\subsubsection{Proof of Lemma~\ref*{lem:XP}}

It remains to prove Lemma~\ref{lem:XP}.  The proof proceeds by descending
induction on $j$.  To this end, the following statements will be shown in
Section~\ref{sssec:s1}--\ref{sssec:s3} below.

\begin{claim}[Start of induction]\label{claim:XP1}
  Equation~\eqref{eq:castor} holds for $j=k$.
\end{claim}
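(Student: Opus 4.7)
The plan is to exploit property~(\ref{cons:RCC}.\ref{il:clyde}), which asserts that $\mu_{p_k}$ is constant.  Denoting by $q \in X$ the unique image point, the map $\mu_{p_k}$ factors as
$$
H \times \{p_k\} \xrightarrow{\,\,c\,\,} \{q\} \xrightarrow{\,\,\iota_q\,\,} X,
$$
where $c$ is the constant map and $\iota_q$ is the inclusion of the point.  The composition law for globally defined torsion-free differentials, noted right after Lemma~\ref{lem:clTFD}, then gives
$$
\dnoTor \mu_{p_k}(\sigma) \;=\; \dnoTor c \bigl( \dnoTor \iota_q(\sigma) \bigr).
$$
Since $p > 0$ is assumed in Theorem~\ref{thm:diffRCspaces}, the sheaf $\widecheck{\Omega}^p_{\{q\}}$ vanishes on the reduced point scheme $\{q\}$, so that the intermediate section $\dnoTor \iota_q(\sigma) \in H^0\bigl(\{q\},\, \widecheck{\Omega}^p_{\{q\}} \bigr) = 0$ is automatically zero.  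Equation~\eqref{eq:castor} for $j=k$ follows at once.

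Alternatively, and even more directly, one could observe that a constant map has vanishing differential, so that $\dK \mu_{p_k} = 0$ at every point of $H \times \{p_k\}$; Lemma~\ref{lem:dandcheckd}, applied pointwise, then forces $\dnoTor \mu_{p_k}$ to vanish everywhere as well.  Either way, this is the easy base case of the descending induction on $j$, and there is no real obstacle: the argument relies only on the compatibility of $\dnoTor$ with composition, together with the triviality of torsion-free (indeed, of all) differentials on a reduced point.
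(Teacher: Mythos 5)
Your proposal is correct, and your second (``alternative'') argument --- constant maps have vanishing Kähler differential, then Lemma~\ref{lem:dandcheckd} --- is exactly the paper's own proof. The first argument via factoring through the point $\{q\}$ and the composition law is an equally valid minor variant, using that $\widecheck{Ω}^p_{\{q\}}=0$ for $p>0$.
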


\begin{claim}[Inductive step I]\label{claim:XP2}
  For all indices $1 ≤ j ≤ k$, Equation~\eqref{eq:pollux} holds for the
  index $j$ if Equation~\eqref{eq:castor} holds for $j$.
\end{claim}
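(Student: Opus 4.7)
The plan is to exploit the product structure of $H \times C_j$ together with the fact that $C_j \cong \bP^1$ carries no nonzero global $s$-forms for $s \geq 1$. First, since $H$ is smooth and $C_j$ is smooth, the variety $H \times C_j$ is smooth and irreducible, so torsion-free differentials there coincide with ordinary Kähler differentials, and $\dnoTor \mu_{C_j}$ agrees with $\dK \mu_{C_j}$. Set $\omega := \dnoTor \mu_{C_j}(\sigma) \in H^0\bigl( H \times C_j,\, \Omega^p_{H \times C_j} \bigr)$. The case $p=0$ is trivial, so I assume $p \geq 1$.

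The key structural observation is that the projection $\pi_H : H \times C_j \to H$ induces an isomorphism
\[
\pi_H^* : H^0\bigl( H,\, \Omega^p_H \bigr) \xrightarrow{\ \cong\ } H^0\bigl( H \times C_j,\, \Omega^p_{H \times C_j} \bigr).
\]
To see this, filter $\Omega^p_{H \times C_j}$ by the standard filtration coming from the relative cotangent sequence of the smooth morphism $\pi_H$, whose graded pieces are $\pi_H^* \Omega^r_H \otimes \pi_{C_j}^* \Omega^{p-r}_{C_j}$ for $0 \leq r \leq p$. Because $C_j$ is proper, the projection formula together with cohomology-and-base-change yields
\[
(\pi_H)_*\bigl( \pi_H^* \Omega^r_H \otimes \pi_{C_j}^* \Omega^{p-r}_{C_j} \bigr) \cong \Omega^r_H \otimes_\bC H^0\bigl( C_j,\, \Omega^{p-r}_{C_j} \bigr).
\]
Since $H^0(\bP^1, \Omega^s_{\bP^1}) = 0$ for $s \geq 1$, only the stratum $r = p$ contributes, giving the claimed isomorphism. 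Consequently, there is a unique form $\tau \in H^0\bigl( H,\, \Omega^p_H \bigr)$ with $\omega = \pi_H^* \tau$.

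Finally, I reduce to the hypothesis \eqref{eq:castor} via the composition law. Let $\iota_j : H \times \{p_j\} \hookrightarrow H \times C_j$ denote the inclusion. Under the obvious identification $H \times \{p_j\} \cong H$, the factorisation $\mu_{p_j} = \mu_{C_j} \circ \iota_j$ and the composition law for torsion-free pull-back (Lemma~\ref{lem:clTFD}) give
\[
\dnoTor \mu_{p_j}(\sigma) = \dnoTor \iota_j \bigl( \dnoTor \mu_{C_j}(\sigma) \bigr) = \dnoTor \iota_j\bigl(\pi_H^* \tau\bigr) = \tau,
\]
using $\pi_H \circ \iota_j = \Id_H$ in the last step. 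The assumption \eqref{eq:castor} thus forces $\tau = 0$, hence $\omega = \pi_H^* \tau = 0$, which is precisely \eqref{eq:pollux}.

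The only potential obstacle is the projection-formula computation above, but this is routine: $\pi_H$ is proper (being the base change of $C_j \to \mathrm{pt}$) and the sheaves $\Omega^{p-r}_{C_j}$ are locally free, so their cohomology on $\bP^1$ is trivially flat and base change applies without further hypotheses on $H$.
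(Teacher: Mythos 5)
Your proof is correct and follows essentially the same route as the paper: both arguments rest on the product structure of $\Omega^p_{H\times C_j}$, the vanishing $H^0(\bP^1,\Omega^1_{\bP^1})=0$, and evaluation along the section $H\times\{p_j\}$ via the composition law. Your packaging — showing $\pi_H^*$ is an isomorphism on global $p$-forms via $(\pi_H)_*$ and then pulling back along $\iota_j$ — is a clean reformulation of the paper's splitting $\Omega^p_{H\times C_j}\cong\sA\oplus\sB$ with $\widecheck{\sigma}_\sB=0$ and $\widecheck{\sigma}_\sA$ determined by its restriction to $H\times\{p_j\}$.
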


\begin{claim}[Inductive step II]\label{claim:XP3}
  For all indices $1 < j ≤ k$, Equation~\eqref{eq:castor} holds for the index
  $j-1$ if Equation~\eqref{eq:pollux} holds for $j$.
\end{claim}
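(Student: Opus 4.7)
The plan is to exploit the geometry of the nodal chain. By construction (cf.\ Figure~\ref{fig:chain}), the marked point $p_{j-1}$ lies on the curve $C_j$ — it is the node $C_{j-1}\cap C_j$. Thus the inclusion
$$
ι_{j-1} : H ⨯ \{p_{j-1}\} \hookrightarrow H ⨯ C_j
$$
exhibits $μ_{p_{j-1}}$ as the composition $μ_{C_j} ◦ ι_{j-1}$.

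First, I would note that all schemes involved are reduced and quasi-projective: $X$ is reduced and projective by hypothesis, and $H ⨯ C_j$ and $H ⨯ \{p_{j-1}\}$ are even smooth, since $H$ is smooth and $C_j$ is a smooth rational curve. Consequently, the composition law for pull-back of torsion-free differentials, Lemma~\ref{lem:clTFD} (together with its formulation for globally defined forms from the remark following it), applies and gives
$$
\dnoTor μ_{p_{j-1}} (σ) \;=\; \dnoTor (μ_{C_j} ◦ ι_{j-1}) (σ) \;=\; \dnoTor ι_{j-1} \bigl( \dnoTor μ_{C_j}(σ) \bigr).
$$

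Finally, I would invoke the inductive hypothesis that Equation~\eqref{eq:pollux} holds for the index $j$, giving $\dnoTor μ_{C_j}(σ) = 0$. Plugging this into the displayed equation yields $\dnoTor μ_{p_{j-1}}(σ) = 0$, which is Equation~\eqref{eq:castor} for the index $j-1$, as required.

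I do not anticipate any real obstacle here: the claim is essentially a formal consequence of the geometric fact that $p_{j-1}$ lies on $C_j$ combined with the composition law. The substantive inductive step is Claim~\ref{claim:XP2}, where one actually uses the triviality of torsion-free differentials on rational curves (via Theorem~\ref{thm:diffRCspaces} applied in the smooth fibred setting, or the filtration of Proposition~\ref{prop:TFDrelFilt}); the present Claim~\ref{claim:XP3} merely propagates the vanishing from the curve $C_j$ to its node $p_{j-1}$ by restriction.
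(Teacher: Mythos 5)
Your proposal is correct and follows exactly the paper's own argument: factor $μ_{p_{j-1}}$ through the inclusion $H ⨯ \{p_{j-1}\} \hookrightarrow H ⨯ C_j$ (using $p_{j-1} ∈ C_j$), apply the composition law of Lemma~\ref{lem:clTFD}, and substitute the inductive vanishing from Equation~\eqref{eq:pollux}. Your closing observation that the substantive content lies in Claim~\ref{claim:XP2} is also accurate.
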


\subsubsection{Proof of Claim~\ref*{claim:XP1}}
\label{sssec:s1}

For $j=k$, Equation~\eqref{eq:castor} follows from
(\ref{cons:RCC}.\ref{il:clyde}), which asserts that $μ_{p_k}$ is constant.  The
pull-back map of Kähler differentials is thus zero, $d μ_{p_k} = 0$.  By
Lemma~\ref{lem:dandcheckd}, the pull-back map of torsion-free differentials is
therefore zero too, so that $\dnoTor μ_{p_k} = 0$ as claimed.  \qed

\subsubsection{Proof of Claim~\ref*{claim:XP2}}
\label{sssec:s2}

Let $1 ≤ j ≤ k$ be any given index, and assume that Equation~\eqref{eq:castor}
holds for $j$.  The following morphisms are relevant in our discussion
$$
\xymatrix{ %
H ⨯ \{ p_j \} \ar[rr]_{γ, \text{ inclusion}} \ar@/^0.5cm/[rrrr]^{μ_{p_j}} &&  H ⨯ C_j \ar[d]^{π_H,\text{ projection}} \ar[rr]_{μ_{C_j}} && X\\
&& H
}
$$
The product structure of $H⨯ C_j$ immediately gives a splitting
$$
Ω^p_{H ⨯ C_j} \,\, \cong \,\, \underbrace{π_H^* (Ω^p_H)}_{=: \sA} \,\, \oplus
\,\, \underbrace{π_H^* (Ω^{p-1}_H) \otimes π_{C_j}^* (Ω^1_{C_j})}_{=: \sB}
$$
Decompose $\dnoTor μ_{C_j}(σ) ∈ H^0\bigl(H ⨯ C_j ,\, Ω^p_{H ⨯ C_j} \bigr)$
accordingly as $\dnoTor μ_{C_j}(σ) = \widecheck{σ}_{\sA} + \widecheck{σ}_{\sB}$,
we aim to show that both $\widecheck{σ}_{\sA}$ and $\widecheck{σ}_{\sB}$ are
zero.

First, if $F = \{h\}⨯ C_j \cong \bP^1$ is any fibre of $π_H$, then $\sB|_F \cong
\sO_{\bP^1}(-2)^{\oplus \bullet}$ is anti-ample.  It follows that $\sB$ only has
the trivial section, so $\widecheck{σ}_{\sB} = 0$.  Secondly, if follows from
Equation~\eqref{eq:castor} and from the composition law for pull-back of
torsion-free differentials, Lemma~\ref{lem:clTFD}, that
$$
\dnoTor γ \bigl( \dnoTor μ_{C_j} (σ) \bigr) = \widecheck{μ}_{p_j}(σ) = 0.
$$
We obtain that $\widecheck{σ}_{\sA}|_{H ⨯ \{p_j\}} = 0$.  Since $\sA|_F$ is a
trivial vector bundle, $\sA|_F \cong \sO_{\bP^1}^{\oplus \bullet}$, this shows
vanishing of $\widecheck{σ}_{\sA}$.

In summary, we have seen that $\dnoTor μ_{C_j}(σ)$ is zero, as asserted in
Claim~\ref*{claim:XP2}.  \qed

\subsubsection{Proof of Claim~\ref*{claim:XP3}}
\label{sssec:s3}

Let $1 < j ≤ k$ be any given index, and assume that Equation~\eqref{eq:pollux}
holds for the index $j$.  Since $p_{j-1} ∈ C_j$, we obtain a factorisation
$$
\xymatrix{ %
  H ⨯ \{ p_{j-1} \} \ar[rr]_{γ,\text{ inclusion}}
  \ar@/^0.5cm/[rrrr]^{μ_{p_{j-1}}} && H ⨯ C_j \ar[rr]_{μ_{C_j}} && X }
$$
The composition law for the pull-back of torsion-free differentials,
Lemma~\ref{lem:clTFD}, then asserts that
$$
\dnoTor μ_{p_{j-1}} (σ) = \dnoTor γ \bigl( \dnoTor μ_{C_j}(σ)
\bigr) \underset{\eqref{eq:pollux}}{=} \dnoTor γ(0) = 0,
$$
which immediately shows the desired vanishing.  This finishes the proof of
Claim~\ref{claim:XP3}, and hence of Theorem~\ref{thm:diffRCspaces}.  \qed

\part{Pull-back properties of reflexive differentials on klt spaces}
\label{part:2}

\section{Main result and elementary consequences}
\label{sec:5}

\subsection{Formulation of  the main result}

We aim to construct a pull-back map for reflexive differentials, which will turn
out to be uniquely determined by universal properties.  To formulate these
properties (``composition law'', ``compatibility with Kähler differentials'') in
a technically correct manner, it seems easiest to use the language of functors.
The following definition fixes the category.

\begin{defn}[Category of klt base spaces]
  Let $X$ be a normal, irreducible variety.  We call $X$ a \emph{klt base space}
  if there exists a $\bQ$-divisor $D$ on $X$ such that the pair $(X,D)$ is klt.
  A morphism between klt base spaces is simply a morphism of varieties.
\end{defn}

The following theorem contains the main result of this paper.  Its proof is given
in Sections~\ref{ssec:pfcpb}--\ref{sec:pfPB}, starting on
Page~\pageref{ssec:pfcpb}.  An extension of Theorem~\ref{thm:PB-thmA} to
morphisms with arbitrary domain is discussed in Section~\vref{ssec:mwad}.

\begin{thm}[Pull-back map for reflexive differentials on klt base spaces]\label{thm:PB-thmA}
  There exists a unique contravariant functor
  \begin{equation}\label{eq:CVF}
    \begin{matrix}
      \drefl : & \{ \text{klt base spaces}  \} & → & \{ \bC\text{-vector spaces} \}\\
      & X & \mapsto & H^0 \bigl( X,\, Ω^{[p]}_X \bigr)
    \end{matrix}
  \end{equation}
  that satisfies the following ``Compatibility with Kähler differentials''.  If
  $f : Z → X$ is a morphism of klt base spaces such that the open set $Z° :=
  Z_{\reg} ∩ f^{-1}(X_{\reg})$ is not empty, then there exists a commutative
  diagram,
  \begin{equation}\label{eq:BCD}
    \begin{split}
      \xymatrix{ %
        H^0 \bigl( X,\, Ω^{[p]}_X \bigr) \ar[rr]^{\drefl f} \ar[d]_{\restr_X} && H^0 \bigl( Z,\, Ω^{[p]}_Z \ar[d]^{\restr_Z} \bigr) \\
        H^0 \bigl( X_{\reg},\, Ω^p_{X_{\reg}} \bigr) \ar[rr]_{\dK(f|_{Z°})} && H^0 \bigl( Z°,\, Ω^p_{Z°} \bigr), }
    \end{split}
  \end{equation}
  where $\dK (f|_{Z°})$ denotes the usual pull-back of Kähler
  differentials, and $\drefl f$ denotes the linear map of complex vector spaces
  induced by the contravariant functor \eqref{eq:CVF}.
\end{thm}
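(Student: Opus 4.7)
The plan is to construct $\drefl f$ via an auxiliary commutative diagram of resolutions, verify functoriality, and then use functoriality together with the compatibility axiom to deduce uniqueness.

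\textbf{Construction.} Given a morphism $f : Z \to X$ of klt base spaces and a reflexive form $\sigma \in H^0\bigl(X,\, \Omega^{[p]}_X\bigr)$, I first choose a strong log resolution $\pi : \widetilde{X} \to X$ whose exceptional locus $E \subset \widetilde{X}$ is an snc divisor. The Extension Theorem~\ref{thm:extpb} produces a regular form $\widetilde{\sigma} \in H^0\bigl(\widetilde{X},\, \Omega^p_{\widetilde{X}}\bigr)$ extending $\sigma|_{X_{\reg}}$. Next, I take an irreducible component of the fibered product $Z \times_X \widetilde{X}$ that surjects onto $Z$, resolve its singularities to obtain a smooth variety $\widetilde{Z}$, and obtain a commutative square with a birational $p_Z : \widetilde{Z} \to Z$ and an accompanying $\widetilde{f} : \widetilde{Z} \to \widetilde{X}$. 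The Kähler pull-back $\widetilde{f}^*(\widetilde{\sigma})$ is a regular $p$-form on $\widetilde{Z}$; to define $\drefl f(\sigma)$ one must descend this form along $p_Z$ to a reflexive form on $Z$.

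\textbf{Descent via rational chain connectivity.} When $Z° := Z_{\reg} \cap f^{-1}(X_{\reg})$ is nonempty, the desired form is simply the reflexive extension of the Kähler pull-back $(f|_{Z°})^*(\sigma|_{X_{\reg}})$, which gives the compatibility diagram~\eqref{eq:BCD} for free. The substantive case is $f(Z) \subseteq X_{\sing}$, where $\widetilde{f}(\widetilde{Z}) \subseteq E$. Here I would invoke the theorem of Hacon and McKernan asserting that fibres of $\pi|_E$ are rationally chain connected, combine it with Theorem~\ref{thm:diffRCspaces} (vanishing of torsion-free forms on such schemes) and the filtration of Proposition~\ref{prop:TFDrelFilt}, to conclude that the restriction $\widetilde{\sigma}|_E$, viewed as a torsion-free form, actually comes from the base $\pi(E)$. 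Transporting this through the fibered-product diagram shows that $\widetilde{f}^*(\widetilde{\sigma})$ is ``horizontal'' with respect to $p_Z$, and hence descends to a reflexive form on $Z$.

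\textbf{Uniqueness and functoriality.} Any candidate functor satisfying the compatibility axiom must coincide with the usual Kähler pull-back on morphisms between smooth varieties, since for these $\Omega^{[p]} = \Omega^p$ and the restriction maps in~\eqref{eq:BCD} are identities. For a general $f : Z \to X$ form the resolution diagram above, with $\widetilde Z$, $\widetilde X$ smooth. The composition law forces $\drefl p_Z \circ \drefl f = \drefl \widetilde{f} \circ \drefl \pi$; the right-hand side is pinned down by the smooth case, while $\drefl p_Z$ is injective on global sections because $p_Z$ is birational and $\Omega^{[p]}_Z$ is torsion-free. Hence $\drefl f$ is uniquely determined. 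Functoriality of the constructed assignment is then verified by a diagram chase that reduces to the composition law for pull-back of torsion-free differentials, Lemma~\ref{lem:clTFD}, applied to the fibered-product diagrams arising when composing two morphisms.

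\textbf{Main obstacle.} The principal technical difficulty is Problem~\ref{prop:2}: Graber--Harris--Starr produces a section of $\pi|_E$ over a $1$-dimensional base but not over higher-dimensional bases, so the clean argument of Example~\ref{ex:pb2} is unavailable. To circumvent this, I would work systematically with auxiliary fibered products and coverings over $Z$ where a partial section does exist, carry out the descent there, and then verify that the resulting reflexive form on $Z$ is independent of all choices involved by tracking forms through these auxiliary spaces with repeated applications of Lemma~\ref{lem:clTFD}. This bookkeeping is what makes the full proof, which occupies the remainder of Part~\ref{part:2}, substantially more involved than the conceptual outline suggests.
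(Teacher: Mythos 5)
Your outline follows essentially the same route as the paper: resolve $X$, extend $\sigma$ to $\widetilde{\sigma}$ via Theorem~\ref{thm:extpb}, combine Hacon--McKernan with Theorem~\ref{thm:diffRCspaces} and the filtration of Proposition~\ref{prop:TFDrelFilt} to show that $\widetilde{\sigma}$ restricted to the relevant exceptional components descends to a form on $\overline{f(Z)}$, pull that form back to $Z$, and obtain uniqueness by chasing the resolution square and using injectivity of the pull-back along the surjection $\widetilde{Z} \to Z$. Two small calibrations against the paper's argument: when $f(Z) \subseteq X_{\sing}$ the map $p_Z : \widetilde{Z} \to Z$ is surjective but \emph{not} birational (the fibres of $Z \times_X \widetilde{X} \to Z$ are then fibres of $\pi$, hence positive-dimensional) --- fortunately only dominance is needed for injectivity of $\dK p_Z$, so your uniqueness argument stands --- and the descent step only produces the form over a dense open subset $f^{-1}(T^\circ)$ of $Z$, so the paper must additionally invoke the regularity criterion of \cite{GKK08} to extend it across the complement.
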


\begin{rem}[Restrictions used in Diagram~\eqref{eq:BCD}]\label{rem:43}
  In the setting discussed in Diagram~\eqref{eq:BCD}, we have equalities
  $$
  Ω^{[p]}_X\bigl|_{X_{\reg}} = Ω^p_{X_{\reg}} \quad \text{and} \quad
  Ω^{[p]}_Z\bigl|_{Z°} = Ω^p_{Z°}.
  $$
  This justifies the use of the word ``restriction'' in Diagram~\eqref{eq:BCD}.
  Since $X$ and $Z$ are normal, hence smooth in codimension one, the restriction
  maps $\restr_X$ and $\restr_Z$ are clearly isomorphic.
\end{rem}

\begin{rem}[Compatibility with Kähler differentials in special cases]
  Consider a morphism $f : Z → X$ of klt base spaces whose image is contained in
  the singular locus of $X$.  In this case, the set $Z°$ discussed in
  Theorem~\ref{thm:PB-thmA} is empty.  The compatibility condition formulated in
  the theorem is then also empty, that is, always satisfied.  This does not mean
  that $\drefl f$ is an arbitrary map.  The pull-back map $\drefl f$ is in fact
  uniquely defined by the functorial properties (``composition rule''), and by
  the requirement that the pull-back maps of other, dominant, morphisms need to
  satisfy compatibility with Kähler differentials.  The proof of
  Proposition~\ref{prop:cmm} will illustrate this principle.
\end{rem}

\begin{rem}[Sanity of notation]
  Given a klt base space, the functorial formulation of
  Theorem~\ref{thm:PB-thmA} would in principle allow to write $\drefl X$ as a
  shorthand for the space $H^0 \bigl( X,\, Ω^{[p]}_X \bigr)$ of reflexive
  differential forms on $X$.  In order to avoid confusion and incompatibility
  with the literature, we will never use this notation.
\end{rem}

\subsection{Elementary properties of the pull-back map}
\label{ssec:5B}

The compatibility with Kähler differentials implies that practically all
properties known from the pull-back map of Kähler differentials also hold in the
reflexive setting.  We mention some of the more immediate examples in the
present Section~\ref{ssec:5B}.

\begin{prop}[Compatibility with open immersions]\label{prop:coi}
  Let $X$ be any klt base space, $Z ⊆ X$ any open set and $f : Z → X$ the
  inclusion map.  Then
  $$
  \drefl f : H^0 \bigl( X,\, Ω^{[p]}_X \bigr) → H^0 \bigl( Z,\,
  Ω^{[p]}_Z \bigr)
  $$
  equals the standard restriction map.  \PreprintAndPublication{}{\qed}
\end{prop}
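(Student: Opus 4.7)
The plan is to deduce Proposition~\ref{prop:coi} directly from the ``Compatibility with Kähler differentials'' of Theorem~\ref{thm:PB-thmA}, by verifying that the standard restriction map is itself a valid candidate for $\drefl f$ and then invoking the uniqueness inherent in that characterisation. Assume $Z \neq \emptyset$ (otherwise the statement is vacuous); since normality, irreducibility and the klt condition are all local, $Z$ naturally inherits the structure of a klt base space. Smoothness is local too, so $Z_{\reg} = Z \cap X_{\reg}$, and hence $Z° = Z \cap X_{\reg}$ is dense in $Z$. Diagram~\eqref{eq:BCD} therefore applies.

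The key step is to check that the standard restriction map $\rho : H^0 \bigl( X,\, Ω^{[p]}_X \bigr) \to H^0 \bigl( Z,\, Ω^{[p]}_Z \bigr)$ makes Diagram~\eqref{eq:BCD} commute with $\rho$ substituted for $\drefl f$. Unwinding the definitions, a section $σ \in H^0 \bigl( X,\, Ω^{[p]}_X \bigr) = H^0 \bigl( X_{\reg},\, Ω^p_{X_{\reg}} \bigr)$ is sent by either composition to $σ|_{Z°}$: the top-right path restricts $σ$ first to $Z$ and then to $Z° \subseteq Z_{\reg}$, while the bottom-left path restricts $σ$ first to $X_{\reg}$ and then applies the Kähler pull-back along the open immersion $f|_{Z°} : Z° \hookrightarrow X_{\reg}$, which is nothing but the restriction of sections. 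Hence the diagram commutes for $\rho$.

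To conclude, recall from Remark~\ref{rem:43} that $\restr_Z$ is an isomorphism, so any map filling the top row of Diagram~\eqref{eq:BCD} is uniquely determined by the bottom row. Both $\drefl f$ (by Theorem~\ref{thm:PB-thmA}) and $\rho$ (by the previous paragraph) qualify, forcing $\drefl f = \rho$. No genuine obstacle arises; the proposition is essentially a sanity check confirming that the universal characterisation recovers the naive map in the simplest geometric setting.
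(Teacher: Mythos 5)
Your argument is correct and follows essentially the same route as the paper's own (very terse) proof: observe that $Z°=Z∩X_{\reg}$ is non-empty, that $\dK(f|_{Z°})$ is just restriction of sections, and conclude via the injectivity of $\restr_Z$ in Diagram~\eqref{eq:BCD}. The only difference is that you spell out the intermediate verification that the naive restriction map also makes the diagram commute, which the paper leaves implicit.
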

\PreprintAndPublication{
  \begin{proof}
    Define the subset $Z°$ as in Theorem~\ref{thm:PB-thmA} and observe that $Z°$
    is not empty.  Using the isomorphisms $\restr_X$ and $\restr_Y$, the claim
    follows from the observation that $\dK(f|_{Z°})$ is the standard restriction
    map.
  \end{proof}
}{}

\begin{prop}[Morphisms to smooth target spaces]\label{prop:mtsmts}
  Let $f : Z → X$ be any dominant morphism between klt base spaces, where $X$ is
  smooth.  Then $\drefl f$ equals the composition of the following maps
  $$
  H^0 \bigl( X,\, Ω^{[p]}_X \bigr) = H^0 \bigl( X,\, Ω^p_X \bigr)
  \xrightarrow{\quad\dK\quad} H^0 \bigl( Z,\, Ω^p_Z \bigr)
  \xrightarrow{\quad\varphi\quad} H^0 \bigl( Z,\, Ω^{[p]}_Z \bigr),
  $$
  where $\varphi$ is induced by the standard map from the sheaf $Ω^p_Z$ into its
  double dual.  \qed
\end{prop}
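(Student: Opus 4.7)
The plan is to invoke the \emph{Compatibility with Kähler differentials} axiom of Theorem~\ref{thm:PB-thmA} and exploit that, when $X$ is smooth, the left vertical map in Diagram~\eqref{eq:BCD} becomes trivial. Since $X$ is smooth we have $X_{\reg} = X$ and $Ω^{[p]}_X = Ω^p_X$, so the identification $H^0\bigl(X,\,Ω^{[p]}_X\bigr) = H^0\bigl(X,\,Ω^p_X\bigr)$ is tautological and $\restr_X$ is the identity map. Furthermore, the set $Z^\circ = Z_{\reg} \cap f^{-1}(X_{\reg}) = Z_{\reg}$ is non-empty (since $Z$ is a normal variety), so the hypothesis of Theorem~\ref{thm:PB-thmA} is satisfied and we are in the ``compatibility'' regime.

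With these simplifications, Diagram~\eqref{eq:BCD} reads
$$
\restr_Z \circ \drefl f \;=\; \dK(f|_{Z_{\reg}}),
$$
where $\restr_Z : H^0\bigl(Z,\,Ω^{[p]}_Z\bigr) \to H^0\bigl(Z_{\reg},\,Ω^p_{Z_{\reg}}\bigr)$ is the restriction map, which is an \emph{isomorphism} by Remark~\ref{rem:43}. It therefore suffices to verify that the candidate composition $\varphi \circ \dK f$ satisfies the same identity, i.e.\ that $\restr_Z \circ \varphi \circ \dK f = \dK(f|_{Z_{\reg}})$; applying the inverse of $\restr_Z$ to both sides then yields the claim.

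To check this last identity, fix a section $σ \in H^0\bigl(X,\,Ω^p_X\bigr)$. The Kähler pull-back $\dK f(σ) \in H^0\bigl(Z,\,Ω^p_Z\bigr)$ is compatible with restriction to open subsets, so its restriction to $Z_{\reg}$ equals $\dK(f|_{Z_{\reg}})(σ)$. Next, the morphism $\varphi : Ω^p_Z \to Ω^{[p]}_Z = ι_* Ω^p_{Z_{\reg}}$ (where $ι: Z_{\reg} → Z$ is the inclusion) is adjoint to the identity on $Ω^p_{Z_{\reg}}$; equivalently, composing $\varphi$ with $\restr_Z$ on sections is simply the map ``restrict to $Z_{\reg}$''. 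Consequently,
$$
\restr_Z \bigl( \varphi(\dK f(σ)) \bigr) \;=\; \dK f(σ) \big|_{Z_{\reg}} \;=\; \dK(f|_{Z_{\reg}})(σ),
$$
which is exactly what we needed.

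The argument is essentially a diagram chase, so there is no real obstacle; the only point requiring attention is the identification $\restr_Z \circ \varphi = (\,\cdot\,)|_{Z_{\reg}}$, which follows immediately from the construction of the reflexive hull $Ω^{[p]}_Z = ι_* Ω^p_{Z_{\reg}}$ and the universal property of $ι_*$. The crucial input from Theorem~\ref{thm:PB-thmA} is that $\drefl f$ is \emph{uniquely} characterised (via the isomorphism $\restr_Z$) by its restriction to $Z_{\reg}$ when the target is smooth, which is why the dominance hypothesis on $f$ (ensuring $Z^\circ \neq \emptyset$) is needed.
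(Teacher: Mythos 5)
Your proof is correct and matches the paper's intended argument: the paper states this proposition with only a \qed precisely because it follows immediately from the compatibility diagram~\eqref{eq:BCD} once one notes that $\restr_X$ is the identity and $\restr_Z \circ \varphi$ is restriction to $Z_{\reg}$, exactly as you have spelled out. One minor remark: since $X$ is smooth we have $f^{-1}(X_{\reg}) = Z$ and hence $Z^\circ = Z_{\reg} \neq \emptyset$ automatically, so the dominance hypothesis is not actually what guarantees $Z^\circ \neq \emptyset$ in this situation.
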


\begin{prop}[Pull-back via a dominant morphism]\label{prop:pbdm}
  Let $f : Z → X$ be any dominant morphism between klt base spaces.  Then the
  associated map
  $$
  \drefl f : H^0 \bigl( X,\, Ω^{[p]}_X \bigr) → H^0 \bigl( Z,\, Ω^{[p]}_Z \bigr)
  $$
  is injective.  \PreprintAndPublication{}{\qed}
\end{prop}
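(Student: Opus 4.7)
The plan is to use the ``Compatibility with Kähler differentials'' characterisation built into Theorem~\ref{thm:PB-thmA} to reduce the proposition to the classical statement that the pull-back of Kähler differentials by a dominant morphism between smooth varieties is injective.

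First, since $f$ is dominant, the preimage $f^{-1}(X_{\reg})$ is dense open in $Z$, so the set $Z° := Z_{\reg} ∩ f^{-1}(X_{\reg})$ is a nonempty open subset of $Z$, and Theorem~\ref{thm:PB-thmA} supplies the commutative diagram \eqref{eq:BCD}. The left vertical $\restr_X$ is tautologically an isomorphism, by the very definition $Ω^{[p]}_X = ι_* Ω^p_{X_{\reg}}$. The right vertical $\restr_Z$ factors through the analogous isomorphism $H^0\bigl(Z,\, Ω^{[p]}_Z\bigr) = H^0\bigl(Z_{\reg},\, Ω^p_{Z_{\reg}}\bigr)$ followed by restriction from the irreducible smooth variety $Z_{\reg}$ to its dense open subset $Z°$; the latter is injective on the locally free sheaf $Ω^p_{Z_{\reg}}$, so $\restr_Z$ is injective as well. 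A diagram chase on \eqref{eq:BCD} therefore reduces injectivity of $\drefl f$ to injectivity of
$$
\dK(f|_{Z°}) : H^0\bigl(X_{\reg},\, Ω^p_{X_{\reg}}\bigr) → H^0\bigl(Z°,\, Ω^p_{Z°}\bigr).
$$

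For this last claim, let $σ$ be in the kernel. By generic smoothness in characteristic zero, there exists a nonempty open subset $U ⊆ Z°$ on which $f|_U$ is smooth; for any $z ∈ U$ the differential $(df)_z$ is surjective, so its transpose, and in turn its $p$-th exterior power, is injective on fibres. Evaluating $\dK(f|_{Z°})(σ) = 0$ in the fibre at $z$ therefore forces $σ(f(z)) = 0$ in $(Ω^p_{X_{\reg}})_{f(z)}$. Since $f|_U$ is smooth and smooth morphisms are open, $f(U)$ is a nonempty open, hence dense, subset of the irreducible variety $X_{\reg}$; fibrewise vanishing on a dense open subset of a locally free sheaf implies $σ = 0$.

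No substantive obstacle arises: the proposition is essentially a formal consequence of the defining compatibility of $\drefl f$ with Kähler pull-back on the smooth locus, combined with the standard injectivity statement for Kähler differentials under dominant morphisms between smooth varieties. The only minor point worth recording is that $\restr_Z$ need only be injective — not an isomorphism — for the reduction to go through, which is all the codimension-one smoothness of $Z$ delivers.
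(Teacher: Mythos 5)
Your argument is correct and follows the same route as the paper's own proof: observe that $Z° := Z_{\reg} ∩ f^{-1}(X_{\reg})$ is non-empty with $f|_{Z°}$ dominant, invoke Diagram~\eqref{eq:BCD}, and reduce to injectivity of $\dK(f|_{Z°})$, using that the restriction maps are injective by normality. The only difference is that you also spell out the standard injectivity of the Kähler pull-back along a dominant morphism of smooth varieties via generic smoothness, which the paper simply takes for granted.
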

\PreprintAndPublication{
  \begin{proof}
    Define the subset $Z°$ as in Theorem~\ref{thm:PB-thmA} and observe that $Z°$
    is not empty, and that $f|_{Z°} : Z° → X$ is dominant.
    Proposition~\ref{prop:pbdm} then follows immediately from injectivity of
    $\dK(f|_{Z°})$.
  \end{proof}
}{}

\subsubsection{The pull-back map on sheaf level}

The following Proposition~\ref{prop:cmm} and Corollary~\ref{cor:pbsheaf} imply
that the pull-back map for reflexive differentials is already defined on sheaf
level, just as the pull-back map for Kähler differentials is.

\begin{prop}[Compatibility with module multiplication]\label{prop:cmm}
  Let $f : Z → X$ be any morphism between klt base spaces.  Then associated map
  $$
  \drefl f : H^0 \bigl( X,\, Ω^{[p]}_X \bigr) → H^0 \bigl( Z,\,
  Ω^{[p]}_Z \bigr)
  $$
  is a morphism of $\sO_X(X)$-modules.
\end{prop}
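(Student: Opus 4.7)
The map $\drefl f$ is already $\bC$-linear, since it is the value on a morphism of the $\bC$-linear contravariant functor of Theorem~\ref{thm:PB-thmA}. What remains is to show that $\drefl f(g\sigma)=f^{*}(g)\cdot\drefl f(\sigma)$ for every $g\in\sO_X(X)$ and every $\sigma\in H^0\bigl(X,\,\Omega^{[p]}_X\bigr)$, where the $\sO_X(X)$-module structure on the target is via $f^{*}\colon\sO_X(X)\to\sO_Z(Z)$.

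\textbf{Step 1: the case $Z^{\circ}\neq\emptyset$.} Assume first that the set $Z^{\circ}=Z_{\reg}\cap f^{-1}(X_{\reg})$ of Theorem~\ref{thm:PB-thmA} is non-empty. Irreducibility and normality of $Z$ imply that $Z^{\circ}$ is open and dense in $Z$, so the restriction map $\restr_Z$ in Diagram~\eqref{eq:BCD} is injective. By Remark~\ref{rem:43}, combined with normality of $X$ and $Z$ (which gives $\sO_X(X)=\sO_{X_{\reg}}(X_{\reg})$ and analogously for $Z$), the vertical maps in~\eqref{eq:BCD} are $\sO$-module isomorphisms. Since the bottom horizontal arrow $\dK(f|_{Z^{\circ}})$ is the usual Kähler pull-back and hence $\sO_{X_{\reg}}(X_{\reg})$-linear, a diagram chase yields the desired $\sO_X(X)$-linearity of $\drefl f$ in this case.

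\textbf{Step 2: reduction to smooth source.} For an arbitrary $f\colon Z\to X$, choose a log resolution $\pi\colon\tilde Z\to Z$. Then $\pi$ is birational, so $\pi^{-1}(Z_{\reg})\neq\emptyset$ and Step~1 applies to $\pi$: the map $\drefl\pi$ is $\sO_Z(Z)$-linear, and by Proposition~\ref{prop:pbdm} it is also injective. Using the composition law $\drefl\pi\circ\drefl f=\drefl{(f\circ\pi)}$, the required $\sO_X(X)$-linearity of $\drefl f$ reduces to the same statement for $\drefl{(f\circ\pi)}$; that is, one may assume that the source is a smooth variety.

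\textbf{Step 3: smooth source via base change.} Write $h\colon\tilde Z\to X$ for the morphism with $\tilde Z$ smooth. Choose a strong log resolution $\rho\colon\tilde X\to X$, form the fibre product $\tilde Z\times_X\tilde X$, pick an irreducible component surjecting onto $\tilde Z$ (it exists because $\rho$ is surjective), and desingularise to obtain a smooth variety $\tilde W$ together with a surjective morphism $\alpha\colon\tilde W\to\tilde Z$ and a morphism $\beta\colon\tilde W\to\tilde X$ satisfying $\rho\circ\beta=h\circ\alpha$. Because $\tilde W$ and $\tilde X$ are both smooth, the set $Z^{\circ}$ of~\eqref{eq:BCD} for $\beta$ equals $\tilde W$, and compatibility with Kähler differentials forces $\drefl\beta=\dK\beta$, which is visibly $\sO_{\tilde X}(\tilde X)$-linear. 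Step~1 applied to $\rho$ and to $\alpha$ furnishes the $\sO$-linearity of $\drefl\rho$ and $\drefl\alpha$, while Proposition~\ref{prop:pbdm} yields injectivity of $\drefl\alpha$. Combining all of the above via
$$
\drefl\alpha\circ\drefl h=\drefl{(h\circ\alpha)}=\drefl{(\rho\circ\beta)}=\drefl\beta\circ\drefl\rho,
$$
the $\sO_X(X)$-linearity of the right-hand side together with the injectivity of $\drefl\alpha$ yield the $\sO_X(X)$-linearity of $\drefl h$. The main obstacle is precisely this base-change construction: one must produce $\tilde W$ as a smooth variety fitting into this commuting diagram, and then the Kähler analysis on $\tilde W\to\tilde X$ pushes down to $\tilde Z$ via the injective map $\drefl\alpha$.
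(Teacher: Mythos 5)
Your proof is correct and follows essentially the same strategy as the paper: establish linearity in the two easy cases (non-empty $Z°$ via compatibility with Kähler differentials, and smooth target where $\drefl$ reduces to $\dK$), then handle the general case by building a smooth variety dominating the source that also maps to a resolution of $X$, and push the linearity down using functoriality together with the injectivity of pull-back along dominant morphisms (Proposition~\ref{prop:pbdm}). Your intermediate Step~2 (first resolving $Z$) is an extra reduction the paper avoids by applying Construction~\ref{cons:exV} directly to $Z$, but it is harmless and the argument goes through.
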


\begin{cor}[Pull-back map on sheaf level]\label{cor:pbsheaf}
  If $f : Z → X$ is any morphism between klt base spaces, then the pull-back
  morphism
  $$
  \drefl f : H^0 \bigl( X,\, Ω^{[p]}_X \bigr) → H^0 \bigl( Z,\, Ω^{[p]}_Z \bigr)
  $$
  is induced by a sheaf morphism
  $$
  \drefl f : f^* Ω^{[p]}_X → Ω^{[p]}_Z.
  $$
\end{cor}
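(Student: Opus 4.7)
The plan is to localise the globally-defined pull-back map, using the functoriality provided by Theorem~\ref{thm:PB-thmA} together with Propositions~\ref{prop:coi} and \ref{prop:cmm}, and then invoke the tensor--hom adjunction to convert a morphism $\Omega^{[p]}_X \to f_* \Omega^{[p]}_Z$ into a morphism $f^* \Omega^{[p]}_X \to \Omega^{[p]}_Z$.

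The first step is to observe that for any Zariski-open subset $U \subseteq X$, the open subvariety $U$ is again a klt base space: normality is local, and if $D$ is a $\bQ$-divisor making $(X,D)$ klt, then the restriction $D|_U$ makes $(U, D|_U)$ klt. Moreover $f^{-1}(U) =: V \subseteq Z$ is open in $Z$, hence a klt base space by the same argument, and $f|_V : V \to U$ is a morphism of klt base spaces. Since taking the double dual commutes with restriction to open sets, we have canonical identifications $\Omega^{[p]}_X|_U = \Omega^{[p]}_U$ and $\Omega^{[p]}_Z|_V = \Omega^{[p]}_V$. Applying the functor $\drefl$ of Theorem~\ref{thm:PB-thmA} to $f|_V$ yields a $\bC$-linear map
$$
\drefl(f|_V) : H^0\bigl(U,\, \Omega^{[p]}_X|_U\bigr) \longrightarrow H^0\bigl(V,\, \Omega^{[p]}_Z|_V\bigr),
$$
and by Proposition~\ref{prop:cmm} this map is in fact $\sO_X(U)$-linear.

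Next, I would verify that these maps assemble into a morphism of sheaves of $\sO_X$-modules $\Omega^{[p]}_X \to f_* \Omega^{[p]}_Z$. For this, given nested open sets $U' \subseteq U \subseteq X$ with respective preimages $V' \subseteq V \subseteq Z$, the equality $f|_{V'} = (f|_V) \circ ι_{V'}$, where $ι_{V'} : V' \hookrightarrow V$ is the inclusion, together with the composition law built into the functor $\drefl$ and Proposition~\ref{prop:coi} (which identifies $\drefl ι_{V'}$ with the usual sheaf-theoretic restriction on reflexive forms), shows that the diagram
$$
\xymatrix{
H^0\bigl(U,\,\Omega^{[p]}_X|_U\bigr) \ar[rr]^{\drefl(f|_V)} \ar[d]_{\mathrm{res}} && H^0\bigl(V,\,\Omega^{[p]}_Z|_V\bigr) \ar[d]^{\mathrm{res}} \\
H^0\bigl(U',\,\Omega^{[p]}_X|_{U'}\bigr) \ar[rr]_{\drefl(f|_{V'})} && H^0\bigl(V',\,\Omega^{[p]}_Z|_{V'}\bigr)
}
$$
commutes. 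Consequently the maps $\drefl(f|_V)$ define a morphism of presheaves on $X$ between the sheaves $\Omega^{[p]}_X$ and $f_* \Omega^{[p]}_Z$, and hence a morphism of sheaves. By Proposition~\ref{prop:cmm} applied to every $U$, this morphism is $\sO_X$-linear.

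Finally, applying the standard tensor--hom adjunction
$$
\Hom_{\sO_X}\bigl(\Omega^{[p]}_X,\, f_* \Omega^{[p]}_Z\bigr) \cong \Hom_{\sO_Z}\bigl(f^* \Omega^{[p]}_X,\, \Omega^{[p]}_Z\bigr)
$$
produces the desired sheaf morphism $\drefl f : f^* \Omega^{[p]}_X \to \Omega^{[p]}_Z$, and by construction its effect on global sections agrees with the original $\drefl f$ of Theorem~\ref{thm:PB-thmA}. The main point that requires care is the commutativity verification in the second step, which is where the compatibility of $\drefl$ with open immersions from Proposition~\ref{prop:coi} is essential; everything else is formal.
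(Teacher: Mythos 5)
Your argument is correct and is exactly what the paper has in mind: the paper's proof is the one-line remark that the claim is ``immediate from compatibility with open immersions and compatibility with module multiplication, Propositions~\ref{prop:coi} and \ref{prop:cmm}'', and your write-up simply spells out that gluing argument (localise via Proposition~\ref{prop:coi} and functoriality, use Proposition~\ref{prop:cmm} for $\sO_X$-linearity, then apply the $f^*\dashv f_*$ adjunction). No gaps; you have merely made explicit the details the paper leaves to the reader.
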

\begin{proof}[Proof of Corollary~\ref{cor:pbsheaf}]
  Immediate from compatibility with open immersions and compatibility with
  module multiplication, Propositions~\ref{prop:coi} and \ref{prop:cmm}.
\end{proof}

The proof of Proposition~\ref{prop:cmm} makes use of the following elementary
construction, which we note for later reference.

\begin{construction}\label{cons:exV}
  Let $f : Z → X$ be any morphism between normal varieties, and let $π: \wtilde
  X → X$ be a resolution of singularities.  Then there exists a smooth variety
  $V$ and a commutative diagram,
  \begin{equation}\label{eq:cSD}
    \begin{split}
      \xymatrix{ %
        &&&& \wtilde X \ar[d]_{π}^{\txt{\scriptsize resolution of\\\scriptsize singularities}} \\
        V \ar@/^5mm/[rrrru]^{a} \ar[rr]_{g\text{, surjective}} && Z \ar[rr]_f && X.
      }
    \end{split}
  \end{equation}
  One way of construction goes as follows.  Choose a component $Y ⊆ \wtilde X
  ⨯_X Z$ that surjects onto $Z$, and let $\wtilde Y → Y$ be a desingularisation.
  Let $d$ be the relative dimension of $\wtilde Y → Z$, and $V ⊂ \wtilde Y$ the
  intersection of $d$ general hyperplanes (if $d=0$, set $V := \wtilde Y$).  The
  variety $V$ is then smooth, with natural morphisms to $Z$ and $\wtilde X$
  making the diagram commutative.  The morphism to $g: V → Z$ constructed in
  this manner is surjective and generically finite.
\end{construction}

\begin{proof}[Proof of Proposition~\ref{prop:cmm}]
  Given reflexive forms $σ_1, σ_2 ∈ H^0 \bigl( X,\, Ω^{[p]}_X \bigr)$ and a
  function $τ ∈ H^0 \bigl( X,\, Ω^{[p]}_X \bigr)$, we need to show that
  \CounterStep
  \begin{equation}\label{eq:cmm1}
    (\drefl f) (τ \cdot σ_1 + σ_2) \overset{!}{=} τ \cdot (\drefl f)
    (σ_1) + (\drefl f)(σ_2).
  \end{equation}
  There are two situation where this is easily true.
  \begin{enumerate}
    \setcounter{enumi}{\value{equation}}
  \item\label{il:XA} If the target of $f$ is smooth, then
    Proposition~\ref{prop:mtsmts} implies that~\eqref{eq:cmm1} holds because
    both $\dK$ and $\varphi$ are $\sO_X(X)$-linear.
  \item\label{il:XB} If $f$ is dominant, then $Z°$ is non-empty, and
    Equation~\eqref{eq:cmm1} holds because compatibility with Kähler
    differentials implies that it holds on the open set $Z°$.
    \setcounter{equation}{\value{enumi}}
  \end{enumerate}
  Proposition~\ref{prop:cmm} is hence shown for dominant morphisms, and for
  morphisms with smooth target.  If $f$ is neither, apply
  Construction~\ref{cons:exV} to obtain a commutative diagram as
  in~\eqref{eq:cSD}.  We have seen above that $\drefl g$ is $\sO_Z(Z)$-linear.
  It is injective by Proposition~\ref{prop:pbdm}.  To prove~\eqref{eq:cmm1}, it
  will therefore suffice to prove the analogous equation for the composed
  morphism $f◦ g$,
  $$
  \bigl( \drefl (f ◦ g) \bigr) (τ \cdot σ_1 + σ_2) \overset{!}{=} τ \cdot \bigl( \drefl (f ◦ g) \bigr)
  (σ_1) + \bigl( \drefl (f ◦ g) \bigr) (σ_2).
  $$
  That, however, follows from functoriality, $\drefl (f ◦ g) = \bigl( \drefl π
  \bigr) ◦ \bigl( \drefl a \bigr)$, because we have seen in above that $\drefl
  π$ and $\drefl a$ are linear over global sections of their respective
  structure sheaves.
\end{proof}

\subsubsection{Compatibility with wedge products and exterior derivatives}
\label{ssec:5C}

If $X$ is any normal variety, then to give a reflexive differential on $X$ it is
equivalent to give a Kähler differential on the smooth locus $X_{\reg}$.  More
precisely, if $ι: X_{\reg} → X$ denotes the inclusion of the smooth locus into
$X$, then $Ω^{[p]}_X = ι_* Ω^p_{X_{\reg}}$.  This description of reflexive
differentials immediately allows to define \emph{reflexive wedge products}, that
is, for all $p$ and $q$ morphisms
$$
\begin{array}{rccc}
  \wedge : & Ω^{[p]}_X \otimes_{\sO_X} Ω^{[q]}_X & → & Ω^{[p+q]}_X \\
  \wedge : & H^0 \bigl( X,\, Ω^{[p]}_X \bigr) \otimes_{\sO_X(X)} H^0 \bigl( X,\, Ω^{[q]}_X \bigr) & → & H^0 \bigl( X,\, Ω^{[p+q]}_X \bigr)
\end{array}
$$
that, on the smooth locus of $X$, agree with the usual wedge products.  We
define $\bC$-linear \emph{reflexive exterior derivatives} in the same fashion,
for all $p$,
$$
\begin{array}{rccc}
  d : & Ω^{[p]}_X & → & Ω^{[p+1]}_X \\
  d : & H^0 \bigl( X,\, Ω^{[p]}_X \bigr) & → & H^0 \bigl( X,\, Ω^{[p+1]}_X \bigr).
\end{array}
$$

In this setting, the proof of Proposition~\ref{prop:cmm} applies almost verbatim
to give the following compatibility result.

\begin{prop}[Compatibility with wedge products and exterior derivatives]
  Let $f : Z → X$ be any morphism between normal varieties.  Then the pull-back
  maps for reflexive differentials and sheaves of reflexive differentials
  commute with reflexive wedge products and reflexive exterior derivatives.
  \qed
\end{prop}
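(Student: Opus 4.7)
The plan is to imitate the proof of Proposition~\ref{prop:cmm} almost verbatim, reducing the desired identities
\[
\drefl f(\sigma \wedge \tau) = \drefl f(\sigma) \wedge \drefl f(\tau)
\quad\text{and}\quad
\drefl f(d\sigma) = d\bigl(\drefl f(\sigma)\bigr)
\]
to two easy base cases by means of Construction~\ref{cons:exV}.

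First I would settle the case where the target $X$ is smooth. By Proposition~\ref{prop:mtsmts}, $\drefl f$ factors as $\varphi \circ \dK f$, with $\dK f$ the classical K\"ahler pull-back. The standard $\dK f$ commutes with $\wedge$ and $d$ by ordinary functoriality, while the canonical map $\varphi : \Omega^p_Z \to \Omega^{[p]}_Z$ commutes with these operations by the very definition of the reflexive wedge product and reflexive exterior derivative in Section~\ref{ssec:5C}: both are constructed so as to extend the smooth-locus operations across the codimension-$\geq 2$ singular set. I would then handle the case where $f$ is dominant. Here $Z^{\circ} := Z_{\reg} \cap f^{-1}(X_{\reg})$ is a dense open subset of $Z$; both sides of each identity are sections of a reflexive sheaf on the normal variety $Z$, hence are determined by their restrictions to $Z^{\circ}$. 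On $Z^{\circ}$, diagram~\eqref{eq:BCD} identifies $\drefl f$ with $\dK(f|_{Z^{\circ}})$, and the reflexive operations with their K\"ahler analogues, so the classical case closes the argument.

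For a general morphism $f$, I would apply Construction~\ref{cons:exV} to obtain a smooth variety $V$, a surjective generically finite morphism $g : V \to Z$, and a morphism $a : V \to \wtilde X$ into a resolution $\pi : \wtilde X \to X$ with $f \circ g = \pi \circ a$. By Proposition~\ref{prop:pbdm}, $\drefl g$ is injective, so it suffices to check the identities after applying $\drefl g$, that is, for $\drefl (f \circ g)$. Functoriality then yields $\drefl(f \circ g) = \drefl a \circ \drefl \pi$ (on global sections, and correspondingly on sheaves via Corollary~\ref{cor:pbsheaf}), and both factors fall under the base cases above: $\drefl a$ because $\wtilde X$ is smooth, $\drefl \pi$ because $\pi$ is birational and hence dominant. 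Since a composition of maps commuting with $\wedge$ and $d$ again commutes with them, the argument concludes.

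There is no substantive mathematical obstacle here; everything is a straightforward translation of the scheme used for Proposition~\ref{prop:cmm}. The one point worth double-checking is that the reflexive wedge product and reflexive exterior derivative introduced in Section~\ref{ssec:5C} agree with the classical operations on the smooth locus by construction, which is precisely what makes the dominant-morphism reduction valid and what lets the smooth-target case pass through $\varphi$ cleanly.
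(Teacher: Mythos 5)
Your proposal is correct and follows exactly the route the paper intends: the paper gives no separate argument but states that the proof of Proposition~\ref{prop:cmm} applies almost verbatim, and your write-up is precisely that adaptation (smooth target via Proposition~\ref{prop:mtsmts}, dominant morphisms via compatibility with Kähler differentials on the dense open set $Z°$, and the general case via Construction~\ref{cons:exV} together with injectivity of $\drefl g$ from Proposition~\ref{prop:pbdm} and functoriality).
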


\subsection{Morphisms with arbitrary domain}
\label{ssec:mwad}

The properties listed above allow for the construction of a meaningful pull-back
map even for morphisms where only the target is assumed to be a klt base space.

To this end, let $f : Z → X$ be any morphism between normal varieties and assume
that $X$ a klt base space.  Observing that the smooth locus $Z_{\reg}$ is a klt
base space as well, and recalling that the restriction map
$$
\restr_Z : H^0 \bigl( Z,\, Ω^{[p]}_Z \bigr) → H^0 \bigl( Z_{\reg},\,
Ω^p_{Z_{\reg}} \bigr)
$$
is isomorphic, define a pull-back map for reflexive differentials as the
composition of the following two maps,
$$
H^0 \bigl( X,\, Ω^{[p]}_X \bigr) \xrightarrow{\quad\drefl (f|_{X_{\reg}})
  \quad} H^0 \bigl( Z_{\reg},\, Ω^p_{Z_{\reg}} \bigr)
\xrightarrow{\quad\restr_Z^{-1}\quad} H^0 \bigl( Z,\, Ω^{[p]}_Z \bigr).
$$
As there is no possibility of confusion, we denote this map again by $\drefl f$.
Is is easy to show that all properties listed in Section~\ref{ssec:5B} also hold
for this generalised map.  In particular, $\drefl f$ is induced by a sheaf
morphism.  We have thus constructed maps $ \drefl f : Ω^{[p]}_X → Ω^{[p]}_Z$ and
$\drefl f : H^0 \bigl( X,\, Ω^{[p]}_X \bigr) → H^0 \bigl( Z,\, Ω^{[p]}_Z \bigr)$
for all indices $p$.

\section{Preparation for the proof of Theorem~\ref*{thm:PB-thmA}}
\label{ssec:pfcpb}

Given a morphism of klt base spaces and a reflexive form on the target space,
the following proposition constructs a reflexive form on the domain that
satisfies a weak universal property.

\begin{prop}[Construction of pull-back forms]\label{prop:const}
  Let $f: Z → X$ be any morphism of klt base spaces, and let $σ ∈ H^0\bigl( X,\,
  Ω^{[p]}_X \bigr)$ be any reflexive $p$-form on $X$.  Then there exists a
  unique reflexive $p$-form $τ ∈ H^0\bigl( Z,\, Ω^{[p]}_Z \bigr)$ satisfying the
  following universal property.  Given any commutative diagram
  \begin{equation}\label{eq:const}
    \begin{split}
      \xymatrix{ %
        &&&& \wtilde X \ar[d]_{π}^{\txt{\scriptsize resolution of\\\scriptsize singularities}} \\
        V \ar@/^5mm/[rrrru]^{a} \ar[rr]_{g\text{, surjective}} && Z_{\reg} \ar[rr]_{f|_{Z_{\reg}}} && X,
      }
    \end{split}
  \end{equation}
  where $V$ is smooth, let $\wtilde{σ} ∈ H^0\bigl( \wtilde X,\, Ω^p_{\wtilde X}
  \bigr)$ be the unique differential form on $\wtilde X$ that agrees with $σ$
  wherever $π$ is isomorphic.  Then
  \begin{equation}\label{eq:const2}
    \dK a(\wtilde{σ}) = 
    \dK g(τ|_{Z_{\reg}}) ∈ H^0 \bigl( V,\, Ω^p_V \bigr).
  \end{equation}
\end{prop}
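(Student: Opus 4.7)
\emph{Uniqueness.} Apply Construction~\ref{cons:exV} to the restricted morphism $f|_{Z_{\reg}} : Z_{\reg} \to X$ to obtain at least one diagram of the shape~\eqref{eq:const} in which $g : V \to Z_{\reg}$ is surjective and generically finite; since both $V$ and $Z_{\reg}$ are smooth, $g$ is generically étale, so the induced map $\dK g : H^0\bigl(Z_{\reg},\, Ω^p_{Z_{\reg}}\bigr) \to H^0\bigl(V,\, Ω^p_V\bigr)$ is injective. If $τ_1, τ_2 \in H^0\bigl(Z,\, Ω^{[p]}_Z\bigr)$ both satisfy~\eqref{eq:const2} for this particular diagram, then $τ_1|_{Z_{\reg}} = τ_2|_{Z_{\reg}}$, and reflexivity on the normal variety $Z$ forces $τ_1 = τ_2$.

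\emph{Existence when $f(Z) \not\subseteq X_{\sing}$.} In this case the set $Z° := Z_{\reg} \cap f^{-1}(X_{\reg})$ is non-empty, and Theorem~\ref{thm:pbGKKP} supplies a reflexive form $τ := \drefl f(σ)$ on $Z$ whose restriction to $Z°$ equals the Kähler pullback $\dK(f|_{Z°})(σ|_{X_{\reg}})$. For any given diagram~\eqref{eq:const}, the open set $U := g^{-1}(Z°) \subseteq V$ is non-empty and dense. Applying the composition law for Kähler differentials to the identity $π \circ a = f \circ g$ on $U$, both $\dK a(\wtilde σ)|_U$ and $\dK g(τ|_{Z_{\reg}})|_U$ are equal to $\dK(f \circ g|_U)(σ|_{X_{\reg}})$. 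As $Ω^p_V$ is locally free on the smooth variety $V$, agreement on the dense open set $U$ propagates to all of $V$, establishing~\eqref{eq:const2}.

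\emph{Existence when $f(Z) \subseteq X_{\sing}$.} Fix one auxiliary diagram from Construction~\ref{cons:exV}. By the uniqueness argument, the restriction $τ|_{Z_{\reg}}$ is forced to be the unique form $τ° \in H^0(Z_{\reg},\, Ω^p_{Z_{\reg}})$ with $\dK g(τ°) = \dK a(\wtilde σ)$; once the existence of such a descent is established, reflexivity over the normal variety $Z$ extends $τ°$ uniquely to the desired $τ$. The universal property for the distinguished diagram then holds tautologically; for a second, arbitrary, diagram one still has to verify~\eqref{eq:const2}.

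\emph{The descent lemma (main obstacle).} Both the existence of $τ°$ and the verification of~\eqref{eq:const2} for a second diagram reduce, after passing to a smooth desingularisation $W$ of a component of the relevant fibre product that dominates $Z_{\reg}$, to the following assertion: whenever $b_1, b_2 : W \to \wtilde X$ are two morphisms from a smooth variety satisfying $π \circ b_1 = π \circ b_2$, one has $\dK b_1(\wtilde σ) = \dK b_2(\wtilde σ)$. Under the present hypothesis, both $b_i(W)$ are contained in the exceptional divisor $E$, whose fibres over points of $X_{\sing}$ are rationally chain connected by Hacon--McKernan. After a further log resolution that makes $E$ relatively snc over a smooth base $T$ dominating $f(Z)$, Proposition~\ref{prop:TFDrelFilt} yields a filtration of $\widecheck{Ω}^p_E$ whose successive quotients have the form $(π|_E)^* Ω^r_T \otimes \widecheck{Ω}^{p-r}_{E/T}$, and Theorem~\ref{thm:diffRCspaces}, applied fibrewise via Corollary~\ref{cor:ntorrest}, annihilates each relative factor $\widecheck{Ω}^{p-r}_{E/T}$ for $p-r > 0$. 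Consequently the image of $\wtilde σ|_E$ in $\widecheck{Ω}^p_E$ is pulled back from $T$, and because $b_1$ and $b_2$ factor through the same map to $T$, their pullbacks of $\wtilde σ$ coincide---finishing the argument.
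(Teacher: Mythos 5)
Your overall strategy is the paper's: reduce to the case $f(Z) ⊆ X_{\sing}$, use Hacon--McKernan together with Theorem~\ref{thm:diffRCspaces}, Corollary~\ref{cor:ntorrest} and the filtration of Proposition~\ref{prop:TFDrelFilt} to show that $\wtilde{σ}|_E$, taken modulo torsion, is pulled back from (a dense smooth open subset $T°$ of) $T = \overline{f(Z)}$, and then transport the resulting form to $Z$. The uniqueness argument and the case $f(Z) \not⊆ X_{\sing}$ are fine. (Minor caveat: Proposition~\ref{prop:TFDrelFilt} needs an honest snc morphism to a smooth base with trivial $Ω^1$, which is why the paper inserts the ``projection to a subvariety'' step of Lemma~\ref{lem:proj}; $π|_E : E → T$ itself is not such a morphism.)

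The genuine gap is in the sentence ``once the existence of such a descent is established, reflexivity over the normal variety $Z$ extends $τ°$ uniquely to the desired $τ$.'' Your descent mechanism --- verifying the cocycle condition on desingularised components of $V ⨯_{Z_{\reg}} V$ via the descent lemma --- produces $τ°$ only over the dense open locus where $g$ is étale (and likewise the paper's direct construction only gives $τ°$ on $f^{-1}(T°)$). The complement of this locus, e.g.\ the branch divisor of $g$, typically has codimension \emph{one} in $Z$, and reflexivity of $Ω^{[p]}_Z$ only permits extension across sets of codimension at least two. What is needed, and what the paper supplies as Lemma~\ref{lem:regCrit} quoting \cite[Cor.~2.12(ii)]{GKK08}, is the non-trivial regularity criterion: a form on a smooth variety whose pull-back under a finite (or proper, generically finite) surjective morphism from a smooth variety is everywhere regular is itself everywhere regular. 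Without it, your $τ$ is defined only on a dense open subset and could a priori acquire poles along a divisor. A second, smaller, omission: your descent lemma compares two maps $b_1, b_2$ into the \emph{same} resolution $\wtilde X$, whereas verifying~\eqref{eq:const2} for an arbitrary second diagram involves a possibly different resolution $π' : \wtilde X' → X$ with its own form $\wtilde{σ}'$; one must pass to a common resolution on which $\wtilde{σ}$ and $\wtilde{σ}'$ agree and lift the relevant maps there, as the paper does in Section~\ref{ssec:popropconst}.
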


\begin{rem}[Existence of $\wtilde{σ}$, restriction of reflexive differentials]
  The existence of the form $\wtilde{σ}$ used in the formulation of
  Proposition~\ref{prop:const} is the main result of the paper \cite{GKKP11}.
  We refer to \cite[Thm.~1.4 and Rem.~1.5.2]{GKKP11} for details.  In analogy
  with Remark~\ref{rem:43}, we have used the canonical identification
  $Ω^{[p]}_Z|_{Z_{\reg}} \cong Ω^p_{Z_{\reg}}$ implicitly in the formulation of
  Equation~\eqref{eq:const2}.
\end{rem}

\begin{rem}[The case where $f(Z) \not \subset X_{\sing}$]\label{rem:exV2}
  In the setting of Proposition~\ref{prop:const}, if $f(Z)$ is not contained in
  the singular locus of $X$, then it follows immediately from the universal
  property \eqref{eq:const2} that $τ$ is the unique reflexive form whose
  restriction to the open set $Z° := Z_{\reg} ∩ f^{-1}(X_{\reg})$ satisfies
  $τ|_{Z°} = \dK(f|_{Z°}) (σ|_{X_{\reg}})$.
\end{rem}

\begin{rem}[Existence of $V$, $a$ and $g$ for given resolution $π$]\label{rem:exV}
  Given an arbitrary resolution of singularities $π: \wtilde X → X$,
  Construction~\ref{cons:exV} shows that there always exist smooth varieties $V$
  and morphisms $a$, $g$ as in Diagram~\eqref{eq:const}.
\end{rem}

\subsection{Preparation for the proof of Proposition~\ref*{prop:const}}
\label{ssec:preppf}

We will see in Section~\ref{ssec:popropconst} that Proposition~\ref{prop:const}
is in fact a corollary of the following, seemingly weaker lemma.

\begin{lem}[Weak version of Proposition~\ref{prop:const}]\label{lem:const}
  Let $f: Z → X$ be any morphism of klt base spaces, and let $σ ∈ H^0\bigl( X,\,
  Ω^{[p]}_X \bigr)$ be any reflexive $p$-form on $X$.  Then there exists a
  resolution of singularities $π: \wtilde X → X$ and a reflexive $p$-form $τ ∈
  H^0\bigl( Z,\, Ω^{[p]}_Z \bigr)$ satisfying a universal property similar to
  the one spelled out in Proposition~\ref{prop:const}: given any smooth variety
  $V$ and morphisms $a$, $g$ forming a commutative diagram as
  in~\eqref{eq:const}, then Equation~\eqref{eq:const2} holds.
\end{lem}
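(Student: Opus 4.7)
The plan is to lift $\sigma$ to a regular $p$-form $\widetilde\sigma$ on a carefully chosen strong log resolution $\pi:\widetilde X\to X$ via the Extension Theorem~\ref{thm:extpb}, then descend $\widetilde\sigma$ stratum-by-stratum along the singularity stratification of $X$ using the torsion-free differential machinery of Part~\ref{part:1}, and finally transfer the descended data to $Z$. Via functorial desingularisation, I choose $\pi$ so that the exceptional divisor $E$ is SNC and so that its components are relatively SNC over each stratum of a fixed smooth stratification $(Y^i)$ of $X$ (for instance the singularity stratification of Corollary~\ref{cor:singStrat}); this is what will make Proposition~\ref{prop:TFDrelFilt} available below. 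Theorem~\ref{thm:extpb} then supplies $\widetilde\sigma\in H^0(\widetilde X,\Omega^p_{\widetilde X})$ agreeing with $\sigma$ on $\pi^{-1}(X_{\reg})$.

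\textbf{Descent of $\widetilde\sigma$ along the stratification.} By induction on the depth of the stratum, I construct for each $Y^i$ a torsion-free $p$-form $\sigma_{Y^i}\in H^0(Y^i,\widecheck\Omega^p_{Y^i})$ such that, on $(\pi|_E)^{-1}(Y^i)$, the image of $\widetilde\sigma|_E$ in $\widecheck\Omega^p_E$ coincides with $\dnoTor(\pi|_E)(\sigma_{Y^i})$. The crucial input is Hacon--McKernan \cite[Cor.~1.5]{HMcK07}: the fibres of $\pi$ over points of $X_{\sing}$ are rationally chain connected, so Theorem~\ref{thm:diffRCspaces} annihilates any torsion-free $p$-form on such a fibre. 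Combined with the filtration of Proposition~\ref{prop:TFDrelFilt} applied to the relatively SNC morphism $E\to\overline{Y^i}$, the ``vertical'' summands of $\widetilde\sigma|_E$ are forced to vanish modulo torsion, leaving $\widetilde\sigma|_E$ as a torsion-free pull-back from the base stratum, which defines $\sigma_{Y^i}$.

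\textbf{Construction of $\tau$ and verification of~\eqref{eq:const2}.} Apply Construction~\ref{cons:exV} to $f|_{Z_{\reg}}$ and $\pi$ to produce a smooth $V_0$, a surjective generically finite $g_0:V_0\to Z_{\reg}$, and $a_0:V_0\to\widetilde X$ with $\pi\circ a_0 = f\circ g_0$. Let $Y^i$ be the unique stratum with $f(Z_{\reg})\cap Y^i$ dense in $f(Z_{\reg})$; then $a_0(V_0) \subseteq \pi^{-1}(\overline{Y^i})$, and the Step~2 identity $\widetilde\sigma|_E = \dK(\pi|_E)(\sigma_{Y^i})$ modulo torsion combined with Proposition~\ref{prop:pull-back-TFD1} (applied to $a_0$) and smoothness of $V_0$ gives $\dK a_0(\widetilde\sigma) = \dK(f\circ g_0)(\sigma_{Y^i})$. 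This identifies $\dK a_0(\widetilde\sigma)$ as $\dK g_0(\tau_0)$ for a unique $\tau_0 \in H^0(Z_{\reg},\Omega^p_{Z_{\reg}})$, defined locally on $Z_{\reg}$ by Kähler pull-back of $\sigma_{Y^i}$ (with analogous contributions from other strata glued together using the inductive compatibility from Step~2). Extend $\tau_0$ reflexively to $\tau\in H^0(Z,\Omega^{[p]}_Z)$ using that $Z_{\reg}\subset Z$ is big. For the universal property with arbitrary $V,a,g$ as in~\eqref{eq:const}, desingularise a dominating component of $V\times_{Z_{\reg}}V_0$ to obtain a smooth $V'$ dominating both $V$ and $V_0$; applying the same torsion-vanishing argument to the composite $V'\to\widetilde X$ shows that both sides of~\eqref{eq:const2} pull back to the same form on $V'$, and injectivity of $\dK$ along the surjective morphism $V'\to V$ between smooth varieties transfers the equality to $V$.

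\textbf{Main obstacle.} The core difficulty is the stratified descent of Step~2. Fibres of $\pi|_E$ over deep strata are typically reducible, non-reduced and singular (Problem~\ref{prob:1}), so the naïve relative differentials argument of Example~\ref{ex:pb2} is unavailable; Problem~\ref{prop:2} likewise rules out invoking Graber--Harris--Starr to obtain sections as in the curve case. The torsion-free differential framework of Part~\ref{part:1} is precisely what bridges this gap: Theorem~\ref{thm:diffRCspaces} supplies the analogue of ``no forms on a smooth rationally connected fibre'' for torsion-free differentials on rationally chain connected, possibly singular, reducible schemes, and Proposition~\ref{prop:TFDrelFilt} plays the role of the relative differentials filtration in this singular setting.
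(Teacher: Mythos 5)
Your overall strategy coincides with the paper's: lift $\sigma$ to a resolution via Theorem~\ref{thm:extpb}, show that the restriction of $\wtilde{\sigma}$ to the exceptional divisor descends to the base using Hacon--McKernan, Theorem~\ref{thm:diffRCspaces} and the filtration of Proposition~\ref{prop:TFDrelFilt}, transfer to $Z$, and verify the universal property on a common smooth cover. There is, however, a genuine gap at the crucial descent step. You invoke Proposition~\ref{prop:TFDrelFilt} ``applied to the relatively SNC morphism $E\to\overline{Y^i}$'', asserting that a suitable choice of the resolution $\pi$ makes this available. It does not. Relative SNC-ness (Definition~\ref{def:sncMorphism}) and Proposition~\ref{prop:TFDrelFilt} require a \emph{smooth surjective morphism from the ambient smooth variety containing $E$ as an snc divisor onto a smooth base}; the proof of Proposition~\ref{prop:TFDrelFilt} begins with the exact sequence $0\to\varphi^*\Omega^1_Y\to\Omega^1_X\to\Omega^1_{X/Y}\to 0$ on that ambient space. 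No choice of resolution produces such a morphism onto a stratum: $\overline{Y^i}$ (and likewise $T=\overline{f(Z)}$, which need not be a stratum closure) is a subvariety of $X$, not the base of a fibration, it is in general singular, and $\pi$ itself is birational, hence never smooth. The required projection has to be \emph{constructed} --- analytically locally, via finite étale covers and ``projection to a subvariety'' \cite[Prop.~2.26]{GKKP11}, and only over a dense open set of general points of $T$ after shrinking $X$ --- which is exactly Lemma~\ref{lem:proj} and is the technical heart of the proof (cf.\ the discussion of Problem~\ref{prop:2}). Your proposal skips this entirely, so the filtration argument, and with it the identity $\dnoTor \iota^\circ(\wtilde{\sigma})=\dnoTor(\pi|_{E})(\sigma_{Y^i})$, is not justified. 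Relatedly, the descent can only be achieved generically over the relevant subvariety, not over ``each stratum'' globally, and descending along the whole singularity stratification is both unnecessary (only the closure of $f(Z)$ matters) and strictly harder.

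A secondary, fixable omission: the form $\tau_0$ is a priori defined only on the dense open subset of $Z_{\reg}$ lying over the good locus of $Y^i$; ``gluing contributions from other strata'' does not yield a regular form on all of $Z_{\reg}$, since a form on a dense open subset of a smooth variety may acquire poles on the complement. What is needed --- and what the paper uses --- is the regularity criterion of Lemma~\ref{lem:regCrit}, i.e.\ \cite[Cor.~2.12]{GKK08}: because $\dK g_0(\tau_0)$ extends to the regular form $\dK a_0(\wtilde{\sigma})$ on all of the generically finite cover $V_0$, the form $\tau_0$ itself extends across $Z_{\reg}$. You should also record the initial reductions (without loss of generality $Z$ is smooth, and $f(Z)\subseteq X_{\sing}$, the complementary case being \cite[Thm.~4.3]{GKKP11}), on which your argument implicitly relies.
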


\begin{rem}[Relation between Lemma~\ref{lem:const} and Proposition~\ref{prop:const}]
  We will later see that the form $τ$ constructed in Lemma~\ref{lem:const} for
  one specific resolution map will also work for any other.
\end{rem}

The remainder of the present Section~\ref{ssec:preppf} is devoted to the proof
of Lemma~\ref{lem:const}.  Though not extremely involved on a conceptual level,
the proof is somewhat lengthy to write down.  To help the reader maintain an
overview, we have subdivided the proof into a number of relatively independent
steps, given in Sections~\ref{sssec:PBAnotation}--\ref{sssec:polconstend} below.

\subsubsection{Proof of Lemma~\ref*{lem:const}: setup of notation, simplification}
\label{sssec:PBAnotation}

We maintain assumptions and notation of Lemma~\ref{lem:const} throughout the
proof.  In order to construct a reflexive form on $Z$, it suffices to construct
the form away from a set of codimension two.  Since $Z$ is a klt base space,
hence normal, we can therefore assume without loss of generality that the
following holds.

\begin{aassumption}\label{aass:smooth}
  The space $Z$ is smooth.
\end{aassumption}

In case where the image of $f$ is not contained in the singular set of $X$, it
has been shown in \cite[Thm.~4.3]{GKKP11} that there exists a differential form
$τ$ on $Z$ that agrees on the open set $f^{-1}(X_{\reg})$ with the usual
pull-back of the Kähler differential $σ|_{X_{\reg}}$.  The differential $τ$
clearly satisfies all requirements stated in Lemma~\ref{lem:const}, so that the
proof is already finished in this case.  We will therefore assume without loss
of generality that the following holds.

\begin{aassumption}\label{aass:sing}
  The image of $f$ is contained in the singular set of $X$.
\end{aassumption}

We define $T ⊆ X$ as the Zariski closure of the image of $f$, that is, $T :=
\overline{f(Z)}$.  Choose a desingularisation $π : \wtilde X → X$ with the
additional property that the preimage $π^{-1}(T) ⊂ \wtilde X$ has pure
codimension one and forms a divisor with simple normal crossing support.
Finally, let $E ⊂ π^{-1}(T)$ be the union of those components that dominate
(=surject onto) the irreducible variety $T$.  Its irreducible components are
denoted as $E = E_0 ∪ \cdots ∪ E_k$

\subsubsection{Proof of Lemma~\ref*{lem:const}: projection to general points of $T$}
\label{sssec:proj}

One way to describe the geometry of $X$ near general points of $T$ is by looking
at a family of sufficiently general complete intersections $(H_t)_{t ∈ T}$, and
by studying the varieties $H_t$ at their intersection points with $T$.  At
general points of $T$, the family defines a morphism, and it is often
notationally convenient to discuss the varieties $H_t$ as being fibres of that
morphism.  This idea is not new, and is explained in great detail in
\cite[Sect.~2.G]{GKKP11}.  The following lemma summarises the results and fixes
notation used throughout the remainder of the proof of Lemma~\ref{lem:const}.
\PreprintAndPublication{Figure~\ref{fig:TSWAH}, taken from the preprint version
  of the paper \cite{GKKP11}, illustrates the setup.}{}

\PreprintAndPublication{
\begin{figure}
  \centering

  \unitlength 1cm
  $$
  \xymatrix{
    *++{\begin{picture}(4,4)(0,0)
      \put( 0.0, 4.2){Smooth Space $\wtilde U_{α}$}
      \put( 0.0, 0.2){\includegraphics[height=3.5cm]{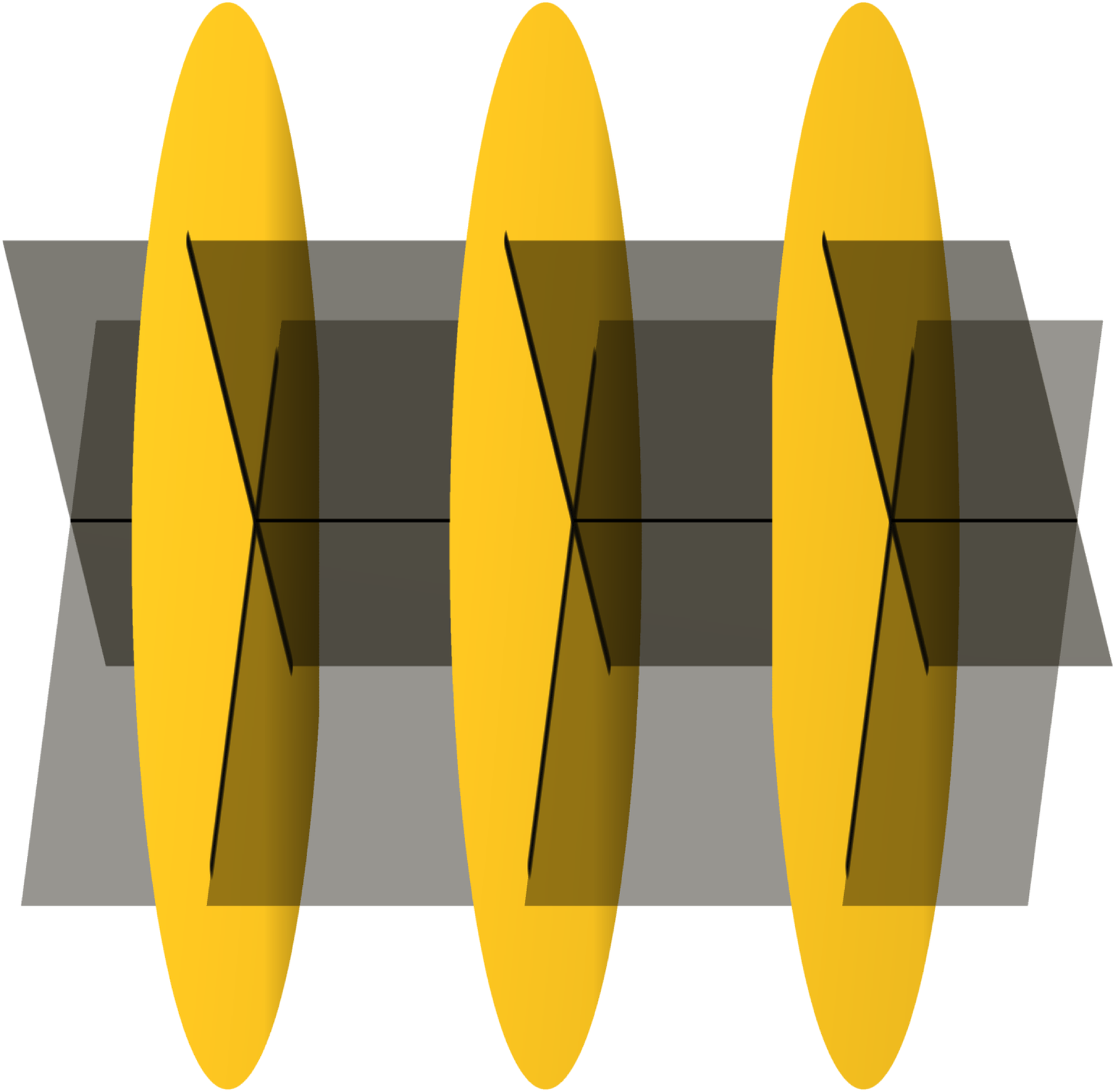}}
      \put( 3.6, 3.5){\scriptsize Divisor $E_0$}
      \put( 3.6, 3.4){\vector(-1, -1){0.4}}
      \put( 4.0, 3.2){\scriptsize Divisor $E_1$}
      \put( 4.0, 3.1){\vector(-1, -1){0.4}}
    \end{picture}}
    \ar@<-.5mm>[rrr]^{π_{α}}_{\text{Resolution Map}} \ar@(d,l)[ddrrr]_(.6){ψ_{α}} &&&
    *++{\begin{picture}(4,4)(0,0)
      \put( 0.0, 4.2){Singular Space $U_{α}$}
      \put( 0.0, 0.2){\includegraphics[height=3.5cm]{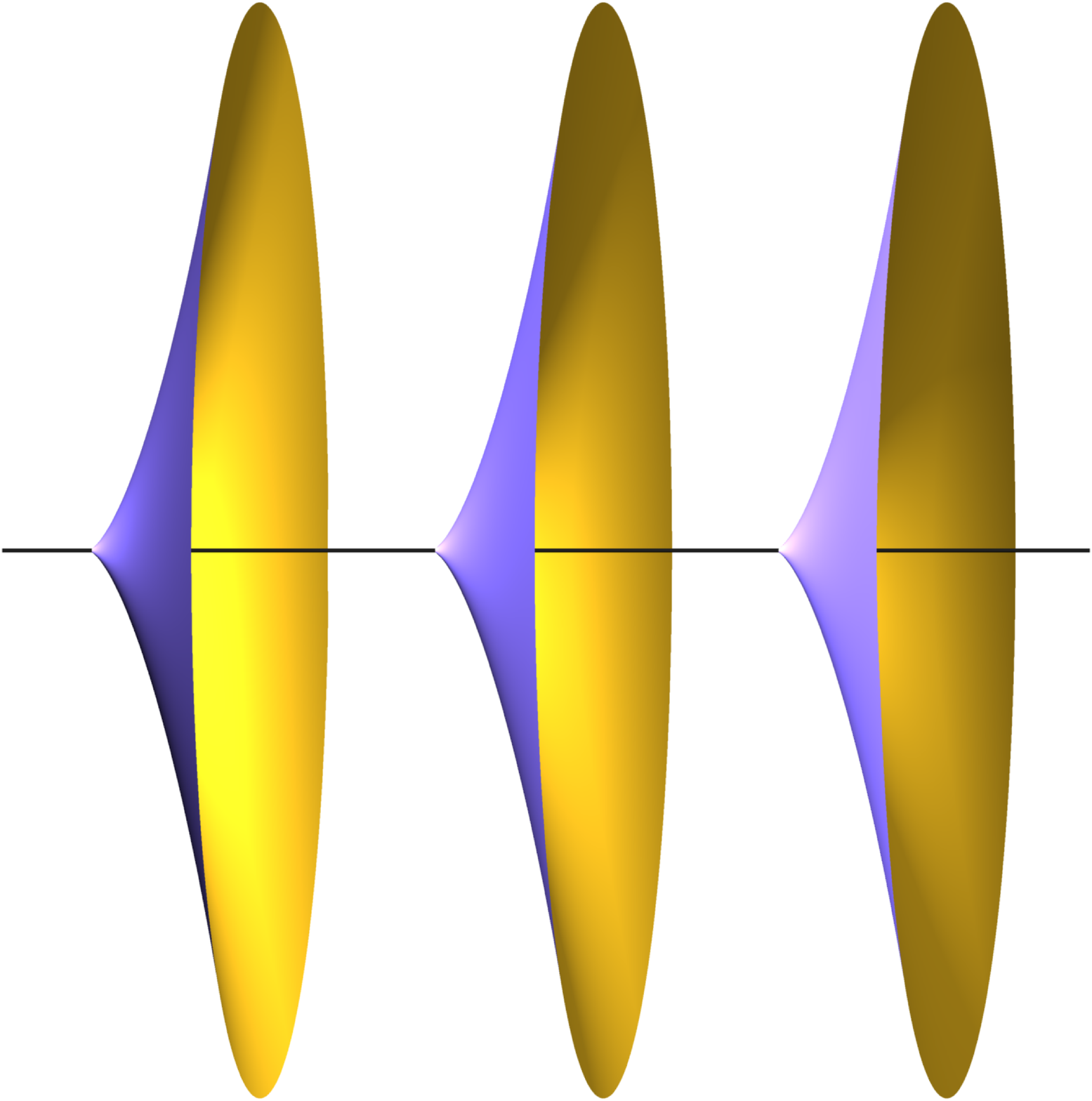}}
      \put( 3.8, 1.3){\scriptsize Curve $T_{α}$}
      \put( 3.8, 1.5){\vector(-1, 1){0.4}}
    \end{picture}}
    \ar[dd]_(.7){φ_{α}}^(.7){\text{Projection to $T_{α}$}} \\\\
    &&&
    *++{\begin{picture}(4,0.6)(0,0)
      \put( 0.0, 0.5){\includegraphics[width=3.5cm]{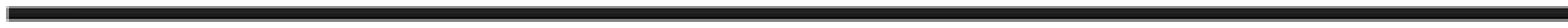}}
      \put( 0.0, 0.0){Curve $T_{α}$}
    \end{picture}}
  }
  $$

  \bigskip

  {\small The figure sketches the situation of Lemma~\ref{lem:proj} in case
    where $X$ is a threefold and $T$ is a curve.  Over $T_{α}$, the
    exceptional set of the resolution map $π$ is a reducible divisor, $E = E_0
    ∪ E_1$.  The composed map $ψ_{α}$ is an snc morphism of the pair
    $(\wtilde U_{α}, E_{α})$.}

  \caption{Projection to general points of $T$}
  \label{fig:TSWAH}
\end{figure}
}{}

\begin{lemNot}[Projection to general points of $T$]\label{lem:proj}
  In the setup of Section~\ref{sssec:PBAnotation}, there exists a dense,
  Zariski-open set $X° ⊆ X$, an open covering of $T° := T ∩ X°$ by sets $U_{α} ⊆
  X°$ that are open in the analytic topology, and commutative diagrams of
  holomorphic morphisms,
  $$
  \xymatrix{
    && \wtilde U_{α} \ar[rr]^(.4){\wtilde{ι}_{α} \text{, inclusion}} \ar@/_4mm/[lld]_{ψ_{α}} \ar[d]_{π|_{\wtilde U_{α}}} && \wtilde X° ⊆ \wtilde X \ar@<4.5mm>[d]^{π}  \ar@<-4.5mm>[d]_{π|_{\wtilde X°}}\\
    T_{α} && U_{α} \ar[ll]^{φ_{α}} \ar[rr]_(.4){ι_{α} \text{, inclusion}} && X° ⊆ X 
  }
  $$
  where
  $$
  \wtilde X° := π^{-1}(X°), \quad \wtilde U_{α} := π^{-1}(U_{α}), \quad T_{α} :=
  T ∩ U_{α},
  $$
  such that the following extra conditions hold
  \begin{enumerate}
  \item\label{il:Qx1} The variety $T°$ is smooth and not empty.  The sheaf
    $Ω^1_{T°}$ is trivial.

  \item\label{il:Qx2} Setting $E° := E ∩ \wtilde X°$, we have $(π°)^{-1}(T°) =
    E°$.

  \item\label{il:Qx3} If $E°_j ⊆ E°$ is any irreducible component, then
    $π°|_{E°_j} : E°_j → T°$ is smooth.

  \item\label{il:Qx4} The restrictions $φ_{α}|_{T_{α}} : T_{α} → T_{α}$ are the
    identity morphisms.

  \item\label{il:Qx5} The holomorphic maps $ψ_{α}$ are smooth.  The divisors
    $E_{α} := E ∩ \wtilde U_{α}$ are relatively snc over $T_{α}$.
  \end{enumerate}
\end{lemNot}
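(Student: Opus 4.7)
The plan is to split the construction into an algebraic and an analytic part. Algebraically, conditions~(\ref{il:Qx1})--(\ref{il:Qx3}) (and the smoothness needed for~(\ref{il:Qx5})) will be arranged by a sequence of generic smoothness arguments on $T$. Analytically, the local covering $(U_\alpha)$ and projections $\varphi_\alpha$ will be produced by applying the implicit function theorem to the smooth subvariety $T° \subseteq X°$. A construction of this type is carried out in detail in \cite[Sect.~2.G]{GKKP11}, whose arguments I would follow.

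To produce $X°$, I would first apply generic smoothness to $T$ and shrink to a smooth affine piece with trivial cotangent sheaf, giving~(\ref{il:Qx1}). Next, I would remove from $T$ the images of the finitely many components of $\pi^{-1}(T)$ that do not dominate $T$, each of which is proper in $T$, giving~(\ref{il:Qx2}). Finally, I would apply generic smoothness to each of the finitely many restricted morphisms $\pi|_{E_j}: E_j \to T$ and to each nonempty scheme-theoretic intersection $\pi|_{E_I}: E_I \to T$, where $E_I := \bigcap_{i \in I} E_i$, giving~(\ref{il:Qx3}) and preparing for the relatively snc condition in~(\ref{il:Qx5}). Intersecting all these dense opens of $T$ would produce $T°$, and I would set $X° := X \setminus \overline{T \setminus T°}$.

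For the analytic data, near any point $t \in T°$, I would embed an affine neighbourhood of $t$ in $X$ into some $\bC^N$. Smoothness of $T°$ at $t$ permits the implicit function theorem to produce analytic coordinates $(y_1, \ldots, y_m, z_1, \ldots, z_{N-m})$ on $\bC^N$ centred at $t$ in which $T°$ is locally cut out by $\{z_1 = \cdots = z_{N-m} = 0\}$. Restricting the coordinate projection $(y, z) \mapsto y$ to $X$ then gives the required holomorphic map $\varphi_\alpha: U_\alpha \to T_\alpha$ on an analytic neighbourhood $U_\alpha$ of $t$, whose restriction to $T_\alpha$ is the identity, verifying~(\ref{il:Qx4}). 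For~(\ref{il:Qx5}), the composition $\psi_\alpha = \varphi_\alpha \circ \pi$ is submersive on $E_j \cap \wtilde U_\alpha$ because $\pi|_{E_j}$ is smooth onto $T_\alpha$ by~(\ref{il:Qx3}) and $\varphi_\alpha$ restricts to the identity on $T_\alpha$; off the exceptional set, $\pi$ is biholomorphic and submersivity of $\psi_\alpha$ follows from that of $\varphi_\alpha$, which can be arranged by shrinking $U_\alpha$. The relatively snc property for $E_\alpha$ then reduces to smoothness of each $E_I \cap \wtilde U_\alpha$ over $T_\alpha$ of the correct relative dimension $\dim X - \dim T - |I|$, which has already been arranged in the construction of $T°$.

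The main obstacle will be verifying condition~(\ref{il:Qx5}) compatibly with the analytic nature of $\varphi_\alpha$: the local analytic retraction onto $T°$ must respect the algebraic snc structure of $E$ after lifting through $\pi$. This is precisely the issue addressed in \cite[Sect.~2.G]{GKKP11}, whose construction I expect to apply essentially verbatim here, with the minor enhancement that generic smoothness of all nonempty intersections $E_I$ (not only the individual components $E_j$) has to be included in the shrinking of $T$.
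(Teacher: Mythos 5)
Your treatment of Conditions~(\ref{lem:proj}.\ref{il:Qx1})--(\ref{lem:proj}.\ref{il:Qx3}) matches the paper, which indeed dismisses these as clear. The genuine gap is in your verification of Condition~(\ref{lem:proj}.\ref{il:Qx5}), specifically smoothness of $ψ_{α}$ at points of $\wtilde U_{α}$ that do \emph{not} lie on $E$. There you assert that submersivity of $φ_{α}$ on $U_{α} ∩ X_{\reg}$ ``can be arranged by shrinking $U_{α}$''. This is not justified: $U_{α}$ is an analytic neighbourhood of a fixed point $t ∈ T^\circ$, and the critical locus of $φ_{α}|_{U_{α} ∩ X_{\reg}}$ is an analytic subset of $U_{α} ∩ X_{\reg}$ that is not closed in $U_{α}$ and may accumulate on $T_{α}$; shrinking $U_{α}$ around $t$ removes nothing in that case. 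Ruling out such accumulation at a \emph{general} point of $T$ is precisely Whitney's condition~(a) for the pair $(X_{\reg}, T)$, i.e.\ generic Whitney regularity --- a genuine extra ingredient that your argument neither names nor proves, and one that does not follow from your device of intersecting the finitely many dense open subsets of $T$ supplied by generic smoothness of the algebraic maps $π|_{E_I}$. The same problem recurs at points of $\wtilde U_{α}$ lying on $π$-exceptional components that do not dominate $T$, which in general cannot be pushed out of $\wtilde U_{α}$ by shrinking.

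The paper's proof is arranged precisely to avoid this issue, and your plan deviates from \cite[Sect.~2.G]{GKKP11} at the decisive point. Rather than building $φ_{α}$ analytically via the implicit function theorem, the paper first invokes \cite[Prop.~2.26]{GKKP11} to produce a finite \emph{étale} cover $γ : Y → X^\circ$ together with an \emph{algebraic} retraction $φ : Y → B := γ^{-1}(T^\circ)$ restricting to the identity on $B$. Because $ψ := φ ◦ p : \wtilde Y → B$ is then a globally defined algebraic morphism from a smooth variety, algebraic generic smoothness (and its logarithmic analogue) applies and permits shrinking $X^\circ$ --- not merely $U_{α}$ --- until $ψ$ is smooth everywhere and $E ⨯_{X^\circ} Y$ is relatively snc over $B$. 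Only afterwards are the analytic neighbourhoods $U_{α}$ chosen, small enough that $γ$ splits over them, and $φ_{α}$, $ψ_{α}$ are obtained from local sections of $γ$; Conditions~(\ref{lem:proj}.\ref{il:Qx4}) and (\ref{lem:proj}.\ref{il:Qx5}) are then inherited from $φ$ and $ψ$. If you wish to keep your purely analytic construction of $φ_{α}$, you must supply the generic Whitney~(a) argument along $T$ and extend it to all exceptional strata over $U_{α}$; otherwise the étale--algebraic detour is the intended route.
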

\begin{proof}
  The existence of an open set $X°$ satisfying
  Conditions~(\ref{lem:proj}.\ref{il:Qx1})--(\ref{lem:proj}.\ref{il:Qx3}) is
  clear.  Shrinking $X°$ further, if necessary, the standard technique of
  ``projection to a subvariety'', explained and proven in
  \cite[Prop.~2.26]{GKKP11}, yields the existence of a finite, étale covering
  map $γ: Y → X°$ and a commutative diagram,
  $$
  \xymatrix{
    && \wtilde Y \ar[rrr]^(.4){\wtilde{γ}\text{, finite, étale}} \ar@/_4mm/[lld]_{ψ} \ar[d]_{p} &&& \wtilde X° ⊆ \wtilde X \ar@<4.5mm>[d]^{π}  \ar@<-4.5mm>[d]_{π|_{\wtilde X°}}\\
    B && Y \ar[ll]^{φ} \ar[rrr]_(.4){γ\text{, finite, étale}} &&& X°
    ⊆ X, }
  $$
  where $\wtilde Y := \wtilde X° ⨯_{X°} Y$, where $B := γ^{-1}(T°)$, and where
  $p$ and $\wtilde{γ}$ denote the obvious projections.  The morphism $φ$ has the
  additional property that its restriction to $B ⊂ Y$ is the identity map.
  Shrinking $X°$ further, if necessary, generic smoothness of morphisms and its
  logarithmic analogue, \cite[Rem.~2.11]{GKKP11}, allow to assume that $ψ$ is
  smooth, and that the divisor $E° ⨯_{X°} Y \subsetneq \wtilde Y$ is relatively
  snc over $B$.

  To end the proof, it suffices to find a covering of $T° ⊂ X°$ by analytically
  open sets $(U_{α})_{α ∈ A} ⊆ X°$ that are small enough so that their preimages
  are disjoint unions of $(\deg γ)$-many open sets,
  $$
  γ^{-1} ( U_{α}) = V_{α,1} \, \overset{\cdot}{∪} \cdots
  \overset{\cdot}{∪} \, V_{α, \deg γ},
  $$
  each canonically identified with $U_{α}$.  Choosing one $V_{α, \bullet}$ for
  each given set $U_{α}$, the morphisms $φ_{α}$ and $ψ_{α}$ are immediately
  obtained from these identifications.  Conditions~(\ref{lem:proj}.\ref{il:Qx4})
  and (\ref{lem:proj}.\ref{il:Qx5}) follow from the properties of $φ$ and $ψ$.
\end{proof}

\subsubsection{Proof of Lemma~\ref*{lem:const}: construction of a differential form on $T°$}

We follow the ideas outlined in Section~\ref{sssec:idea}.  Given any point $t ∈
T_{α}$, a fundamental result of Hacon-McKernan asserts that the fibre $E_t :=
π^{-1}(t)$ is rationally chain connected.  Using the results obtained in
Section~\ref{sec:tfdrcc}, this implies that relative differentials in
$Ω^p_{\wtilde U_{α} / T_{α}}$ vanish modulo torsion when restricted to any
component of $E_t$.  This is a first indication of the principle that ``the
restriction of any differential to $E°$ comes from $T°$'', as formulated and
proven in the following lemma.

\begin{lem}[Restriction of any differential to $E°$ comes from $T°$]\label{lem:rDwXc1}
  In the setup of Section~\ref{sssec:proj}, the pull-back map of torsion-free
  differentials,
  $$
  \dnoTor (π|_{E°}) : H^0 \bigl( T°,\, \widecheck{Ω}^p_{T°}
  \bigr) → H^0 \bigl( E°,\, \widecheck{Ω}^p_{E°} \bigr),
  $$
  is isomorphic.
\end{lem}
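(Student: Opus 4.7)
The plan is to prove this in two independent steps, injectivity and surjectivity.

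For \emph{injectivity}, I would proceed as follows. Let $σ ∈ H^0 \bigl( T°,\, \widecheck{Ω}^p_{T°} \bigr) = H^0\bigl(T°,\,Ω^p_{T°}\bigr)$ (the two coincide by smoothness of $T°$) and assume $\dnoTor (π|_{E°})(σ) = 0$. Pick any irreducible component $E°_0 ⊆ E°$ with inclusion map $ι_0 : E°_0 → E°$. The composition law for pull-back of torsion-free differentials (Lemma~\ref{lem:clTFD}) gives $\dnoTor(π|_{E°_0})(σ) = \dnoTor(ι_0) \circ \dnoTor(π|_{E°})(σ) = 0$. Condition~(\ref{lem:proj}.\ref{il:Qx3}) ensures that $π|_{E°_0} : E°_0 → T°$ is smooth and surjective between smooth varieties, so the torsion-free pull-back agrees with the usual Kähler pull-back $\dK(π|_{E°_0})$, which is injective. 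Hence $σ = 0$.

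For \emph{surjectivity}, I would apply Proposition~\ref{prop:TFDrelFilt} to each snc morphism $ψ_α : \wtilde U_α → T_α$ (condition~(\ref{lem:proj}.\ref{il:Qx5})). Because $φ_α|_{T_α}$ is the identity by condition~(\ref{lem:proj}.\ref{il:Qx4}), the morphism $ψ_α$ restricts to $π|_{\wtilde U_α}$ on preimages of $T_α$, and the local filtrations glue to a global filtration
$$
\widecheck{Ω}^p_{E°} = \widecheck \sF^0 ⊇ \widecheck \sF^1 ⊇ \cdots ⊇ \widecheck \sF^p ⊇ \widecheck \sF^{p+1} = 0
$$
with graded pieces $\widecheck \sF^r/\widecheck \sF^{r+1} \cong (π|_{E°})^* Ω^r_{T°} \otimes \widecheck{Ω}^{p-r}_{E°/T°}$, the top piece being $\widecheck \sF^p \cong (π|_{E°})^* Ω^p_{T°}$. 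Granting that the lower pieces ($r<p$) have no non-zero global sections, the filtration forces $H^0\bigl(E°,\,\widecheck{Ω}^p_{E°}\bigr) = H^0\bigl(E°,\,(π|_{E°})^*Ω^p_{T°}\bigr)$. Using that $Ω^p_{T°}$ is free (condition~(\ref{lem:proj}.\ref{il:Qx1})) and that the fibres of $π|_{E°}$ are connected (rational chain connectedness implies connectedness), this last group equals $H^0\bigl(T°,\,Ω^p_{T°}\bigr)$, with the pull-back realising the isomorphism.

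The main obstacle is therefore the vanishing of the lower graded pieces. The strategy I would follow rests on three ingredients already in the paper. First, Hacon--McKernan \cite[Cor.~1.5]{HMcK07} (the tool used in Example~\ref{ex:pb2}) asserts that every fibre $E_t = π^{-1}(t)$ over a point $t ∈ T°$ is projective and rationally chain connected. Second, Corollary~\ref{cor:ntorrest} identifies $\widecheck{Ω}^{p-r}_{E°/T°}\bigl|_{E_t} \cong \widecheck{Ω}^{p-r}_{E_t}$. Third, Theorem~\ref{thm:diffRCspaces} then gives $H^0\bigl(E_t,\,\widecheck{Ω}^{p-r}_{E_t}\bigr) = 0$ whenever $p - r > 0$. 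Consequently any section $s ∈ H^0\bigl(E°,\, \widecheck{Ω}^{p-r}_{E°/T°}\bigr)$ restricts to zero on every fibre $E_t$.

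To pass from fibrewise vanishing to actual vanishing of $s$, I would restrict to each irreducible component $E°_j$ via the injection of Proposition~\ref{prop:NamiTFD0}. Since $π|_{E°_j}: E°_j → T°$ is smooth, $Ω^{p-r}_{E°_j/T°}$ is locally free on $E°_j$, and $s|_{E°_j}$ is a section of a locally free sheaf vanishing on every fibre of a smooth surjection onto $T°$; on the reduced scheme $E°_j$ such a section must be zero. Since this holds for every component $E°_j$, Proposition~\ref{prop:NamiTFD0} forces $s = 0$, proving the key claim and closing the argument. Tensoring with the locally free sheaf $(π|_{E°})^*Ω^r_{T°}$ (which is in fact free by condition~(\ref{lem:proj}.\ref{il:Qx1})) preserves this vanishing, completing the proof.
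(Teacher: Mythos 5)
Your argument is correct and follows essentially the same route as the paper: surjectivity via the filtration of Proposition~\ref{prop:TFDrelFilt} applied to the snc morphisms $ψ_α$, triviality of $Ω^r_{T°}$, reduction to the vanishing of $H^0\bigl(E°,\, \widecheck{Ω}^q_{E°/T°}\bigr)$ for $q>0$, and then Corollary~\ref{cor:ntorrest} combined with Hacon--McKernan and Theorem~\ref{thm:diffRCspaces}. The only organisational difference is that the paper keeps the filtration local on the sets $U_α$ and glues the resulting forms on $T_α$ at the end (using injectivity), whereas you glue the filtrations themselves into a global one on $E°$; both work, and your explicit treatment of injectivity and of the passage from fibrewise to actual vanishing spells out steps the paper leaves implicit.
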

\begin{proof}
  Choosing an open cover of $T°$ as in Lemma~\ref{lem:proj}, it suffices to show
  that the pull-back maps associated with the restricted morphisms,
  $$
  \dnoTor (π|_{E_{α}}) : H^0 \bigl( T_{α},\, \widecheck{Ω}^p_{T_{α}} \bigr) →
  H^0 \bigl( E_{α},\, \widecheck{Ω}^p_{E_{α}} \bigr),
  $$
  are isomorphisms, for all $α ∈ A$.  Let $α ∈ A$ be any given index.  The
  pull-back map $\dnoTor (π|_{E_{α}})$ is clearly injective.  To prove
  surjectivity, consider the filtration and the sequences introduced in
  Proposition~\ref{prop:TFDrelFilt},
  \begin{gather}
    \widecheck{Ω}^p_{E_{α}} = \widecheck \sF^0 ⊇ \widecheck \sF^1 ⊇
    \cdots ⊇ \widecheck \sF^p ⊇ \widecheck \sF^{p+1} = 0\\
    \label{seq:filt} 0 → \widecheck \sF^{r+1} → \widecheck \sF^r → \left(
      ψ_{α}^* \widecheck{Ω}^r_{T_{α}} \right) \otimes
    \widecheck{Ω}^{p-r}_{E_{α}/T_{α}} → 0,
  \end{gather}
  which exist for all $0 ≤ r ≤ p$.  Observe that in case where $r=p$,
  Sequence~\eqref{seq:filt} yields $\widecheck \sF^p = ψ^*_{α}
  \widecheck{Ω}^p_{T_{α}}$.  To prove Lemma~\ref{lem:rDwXc1}, we aim to show
  that
  \begin{equation}\label{eq:ppo}
    H^0 \bigl( {E_{α}},\, \widecheck{Ω}^p_{E_{α}} \bigr) \overset{!}{=}
    H^0 \bigl( {E_{α}},\, \widecheck \sF^p \bigr) =
    H^0 \bigl( {E_{α}},\, ψ_{α}^* \widecheck{Ω}^p_{T_{α}} \bigr) =
    H^0 \bigl( {T_{α}},\, \widecheck{Ω}^p_{T_{α}} \bigr).
  \end{equation}
  Since the vector bundles, $ψ_{α}^* \widecheck{Ω}^r_{T_{α}} = ψ_{α}^*
  Ω^r_{T_{α}}$ are trivial by Assertion~(\ref{lem:proj}.\ref{il:Qx1}) of
  Lemma~\ref{lem:proj}, Equation~\eqref{eq:ppo} will follow from an inductive
  argument using the Sequences~\eqref{seq:filt} once we show that
  \begin{equation}\label{eq:ppo2}
    H^0 \bigl( E_{α},\, \widecheck{Ω}^q_{E_{α}/T_{α}} \bigr)
    \overset{!}{=} 0 \text{\quad for all $q>0$}.
  \end{equation}
  Using Corollary~\ref{cor:ntorrest} to identify the restriction
  $\widecheck{Ω}^q_{E_{α}/T_{α}} \bigl|_{E_t}$ with $\widecheck{Ω}^q_{E_t}$ for
  all points $t ∈ T_{α}$ and all fibres $E_t := π^{-1}(t) ⊂ E_{α}$,
  Equation~\eqref{eq:ppo2} will in turn follow from the stronger claim that
  \begin{equation}\label{eq:ppq}
    H^0 \bigl( E_t,\, \widecheck{Ω}^q_{E_t} \bigr) \overset{!}{=} 0, \quad \text{for all $t ∈ {T_{α}}$ and all $q>0$.}
  \end{equation}
  To prove~\eqref{eq:ppq}, recall that $X°$ is a klt base space.  A fundamental
  result of Hacon-McKernan \cite[Cor.~1.5(1)]{HMcK07} thus implies that fibres
  $E_t$, being fibres of the birational resolution map $π$, are rationally chain
  connected.  The vanishing asserted in~\eqref{eq:ppq} is therefore an immediate
  consequence of the non-existence of torsion-free forms in rationally chain
  connected spaces, as asserted in Theorem~\ref{thm:diffRCspaces}.
\end{proof}

\begin{rem}
  Using the standard fact that klt base spaces have rational singularities, it
  might be possible to give a proof of Lemma~\ref{lem:rDwXc1} using Namikawa's
  analysis of mixed Hodge structures,
  \cite[Lemma~1.2]{NamikawaDeformationTheory}, rather than the more elementary
  Theorem~\ref{thm:diffRCspaces}
\end{rem}

\begin{cor}[Restriction of $\wtilde{σ}$ to $E°$ comes from a form $η°$ on $T°$]
  In the setup of Section~\ref{sssec:proj}, there exists a unique torsion-free
  form $η° ∈ H^0 \bigl( T°,\, \widecheck{Ω}^p_{T°} \bigr)$ such that
  \begin{equation}\label{eq:bpZ}
    \dnoTor  ι° (\wtilde{σ}) = \dnoTor (π|_{E°}) (η°),
  \end{equation}
  where $ι° : E° → \wtilde X°$ denotes the obvious inclusion.  \qed
\end{cor}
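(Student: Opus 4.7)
The plan is to deduce this corollary as an essentially immediate consequence of Lemma~\ref{lem:rDwXc1}. Since $\wtilde X°$ is smooth, its sheaf of Kähler differentials coincides with its sheaf of torsion-free differentials, so $\wtilde{σ}$ may be viewed as a section of $\widecheck{Ω}^p_{\wtilde X°}$. The first step is to apply the pull-back morphism $\dnoTor ι°$ constructed in Corollary~\ref{cor:pull-back-TFD1} to $\wtilde{σ}$, producing an element
$$
\dnoTor ι° (\wtilde{σ}) \in H^0 \bigl( E°,\, \widecheck{Ω}^p_{E°} \bigr).
$$

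The second step is simply to invoke Lemma~\ref{lem:rDwXc1}, which tells us that
$$
\dnoTor (π|_{E°}) : H^0 \bigl( T°,\, \widecheck{Ω}^p_{T°} \bigr) \longrightarrow H^0 \bigl( E°,\, \widecheck{Ω}^p_{E°} \bigr)
$$
is an isomorphism of $\bC$-vector spaces. Surjectivity of this isomorphism yields existence of a form $η° \in H^0\bigl(T°,\,\widecheck{Ω}^p_{T°}\bigr)$ satisfying Equation~\eqref{eq:bpZ}, and injectivity yields its uniqueness.

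There is no conceptual obstacle here; all the work has been done in establishing Lemma~\ref{lem:rDwXc1} (which in turn relied on the filtration in Proposition~\ref{prop:TFDrelFilt}, the restriction isomorphism of Corollary~\ref{cor:ntorrest}, and the Hacon--McKernan rational chain connectedness combined with Theorem~\ref{thm:diffRCspaces}). The corollary is therefore just a one-line reformulation in which the abstract isomorphism of vector spaces is applied to the specific element $\dnoTor ι°(\wtilde{σ})$.
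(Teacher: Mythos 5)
Your proposal is correct and matches the paper's intent exactly: the corollary is stated with an immediate \qed because it is precisely the observation that the isomorphism of Lemma~\ref{lem:rDwXc1} can be applied to the specific element $\dnoTor ι°(\wtilde{σ})$, with surjectivity giving existence and injectivity giving uniqueness of $η°$. Your preliminary remark that $\wtilde{σ}$ may be regarded as a torsion-free differential on the smooth space $\wtilde X°$, so that $\dnoTor ι°$ applies, is a correct and worthwhile clarification.
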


\subsubsection{Proof of Lemma~\ref*{lem:const}: construction of a differential form on an open set of $Z$}

Consider the non-empty, Zariski-open set $Z° := f^{-1}(T°) ⊆ Z$.  We obtain a
torsion-free form
\begin{equation}\label{eq:deftau}
  τ° := \bigl( \dnoTor(f|_{Z°}) \bigr) (η°) ∈ H^0 \bigl( Z°,\, \widecheck{Ω}^{p}_{Z°} \bigr).
\end{equation}
The following elementary lemma summarises what we know about $τ°$.  In essence,
it shows that the form $τ°$ satisfies Equation~\eqref{eq:const2} on the open set
$Z°$.

\begin{lem}[The form $τ°$ satisfies Equation~\eqref{eq:const2} on the open set $Z°$]\label{lem:bpX}
  Setting as above.  Given a smooth variety $V$ and morphisms $a$, $g$ forming a
  commutative diagram as in \eqref{eq:const} of Proposition~\ref{prop:const},
  \PreprintAndPublication{
    \begin{equation}\label{eq:constrep}
      \begin{split}
        \xymatrix{ %
          &&&& \wtilde X \ar[d]_{π}^{\txt{\scriptsize resolution of\\\scriptsize singularities}} \\
          V \ar@/^5mm/[rrrru]^{a} \ar[rr]_{g\text{, surjective}} && Z_{\reg} \ar[rr]_{f|_{Z_{\reg}}} && X,
        }
      \end{split}
    \end{equation}
  }{} set $V° := g^{-1}(Z°)$.  Then
  \begin{equation}\label{eq:bpX}
    \bigl(\dnoTor (a|_{V°}) \bigr)(\wtilde{σ}) = \big( \dnoTor (g|_{V°}) \bigr)(τ°).
  \end{equation}
\end{lem}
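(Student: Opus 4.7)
The plan is to reduce Equation~\eqref{eq:bpX} to a routine application of the composition law for pull-backs of torsion-free differentials (Lemma~\ref{lem:clTFD}), fed by the defining relation~\eqref{eq:bpZ} for $η°$ and the definition~\eqref{eq:deftau} of $τ°$. The only non-formal input is a short geometric observation showing that the restricted morphism $a|_{V°}$ factors through the exceptional set $E°$.

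The first step is to verify that $a(V°) ⊆ E°$. For any $v ∈ V°$ one has $g(v) ∈ Z° = f^{-1}(T°)$, so commutativity of~\eqref{eq:const} yields $π(a(v)) = f(g(v)) ∈ T°$; by assertion~(\ref{lem:proj}.\ref{il:Qx2}) this forces $a(v) ∈ π^{-1}(T°) = E°$. Accordingly, $a|_{V°}$ factors as $a|_{V°} = ι° ◦ a'$ through a unique morphism $a': V° → E°$, and commutativity of~\eqref{eq:const} gives the further identity $π|_{E°} ◦ a' = f|_{Z°} ◦ g|_{V°}$.

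The second step is a mechanical chase. Applying Lemma~\ref{lem:clTFD} to the factorisation $a|_{V°} = ι° ◦ a'$, substituting $\dnoTor ι° (\wtilde σ) = \dnoTor(π|_{E°})(η°)$ from~\eqref{eq:bpZ}, collapsing $\dnoTor(a') ◦ \dnoTor(π|_{E°})$ into $\dnoTor(π|_{E°} ◦ a')$ by Lemma~\ref{lem:clTFD}, then replacing the latter composition by $\dnoTor(f|_{Z°} ◦ g|_{V°})$ using the commutativity established in Step~1, expanding this once more via Lemma~\ref{lem:clTFD} into $\dnoTor(g|_{V°}) ◦ \dnoTor(f|_{Z°})$, and finally invoking the definition~\eqref{eq:deftau} of $τ°$, one arrives at $\dnoTor(g|_{V°})(τ°)$, which is Equation~\eqref{eq:bpX}.

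No step presents a real obstacle. The only piece of genuine content, as opposed to formal book-keeping, is the geometric observation in Step~1, which rests on the carefully arranged property $π^{-1}(T°) = E°$ recorded in Lemma~\ref{lem:proj}; without this equality the morphism $a|_{V°}$ would not land in $E°$ and the key identity~\eqref{eq:bpZ} could not be inserted. Everything else is a straightforward organised use of the composition law.
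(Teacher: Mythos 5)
Your proposal is correct and follows essentially the same route as the paper: factor $a|_{V°}$ through the inclusion $ι° : E° → \wtilde X°$ (the paper justifies this via Assumption~\ref{aass:sing} and commutativity, you via the arranged equality $(π°)^{-1}(T°) = E°$ from Lemma~\ref{lem:proj}, which amounts to the same point), then chain together Lemma~\ref{lem:clTFD}, Equation~\eqref{eq:bpZ} and the definition~\eqref{eq:deftau} of $τ°$. The only difference is that you run the computation from the left-hand side of~\eqref{eq:bpX} while the paper runs it from the right, which is immaterial.
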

\begin{proof}
  It follows from commutativity of
  \PreprintAndPublication{Diagram~\eqref{eq:constrep}}{Diagram~\eqref{eq:const}}
  and from Assumption~\ref{aass:sing} that the image of the restricted morphism
  $a|_{V°}$ is contained in the divisor $E° ⊂ \wtilde X$.  In other words, there
  exists a factorisation
  \begin{equation}\label{eq:bpY}
    \xymatrix{ %
      V° \ar[rr]_{b} \ar@/^4mm/[rrrr]^{a|_{V°}} && E° \ar[rr]_{ι°\text{, inclusion}} && X°.
    }
  \end{equation}
  Equation~\eqref{eq:bpX} then follows easily from the composition law for
  pull-back of torsion-free differentials formulated in Lemma~\ref{lem:clTFD}
  and from the commutativity of the diagrams considered so far.
  \PreprintAndPublication{
    \begin{align*}
      \big( \dnoTor  (g|_{V°}) \bigr)(τ°) & = \bigl(\dnoTor (g|_{V°}) ◦ \dnoTor (f|_{Z°}) \bigr)(η°) && \text{Definition of $τ°$ in \eqref{eq:deftau}} \\
      & = \bigl(\dnoTor b ◦ \dnoTor (π|_{E°}) \bigr)(η°) && \text{Commutativity of~\eqref{eq:const}} \\
      & = \bigl(\dnoTor b ◦ \dnoTor ι° \bigr)(\wtilde{σ}) && \text{Equation~\eqref{eq:bpZ}} \\
      & = \bigl(\dnoTor (a|_{\wtilde V°}) \bigr)(\wtilde{σ}) && \text{Factorisation~\eqref{eq:bpY}.}
    \end{align*}
    This finishes the proof of Lemma~\ref{lem:bpX}.
  }{}
\end{proof}

\begin{rem}[Viewing $τ°$ as a Kähler differential]\label{rem:FGH}
  The varieties $V$, $V°$, $Z$ and $Z°$ are smooth by
  Assumption~\ref{aass:smooth}.  The spaces $H^0 \bigl( Z°,\,
  \widecheck{Ω}^{p}_{Z°} \bigr)$ and $H^0 \bigl( Z°,\, Ω^{p}_{Z°} \bigr)$
  therefore agree, allowing us to view the torsion-free form $τ° ∈ H^0 \bigl(
  Z°,\, \widecheck{Ω}^{p}_{Z°} \bigr)$ as a Kähler differential, say
  $τ°_{\kahler} ∈ H^0 \bigl( Z°,\, Ω^{p}_{Z°} \bigr)$.  Since $a_{V°}$ and
  $g|_{V°}$ are morphisms between smooth spaces, the pull-back notions for
  torsion-free forms and Kähler differentials agree.
  Equation~\eqref{eq:bpXKahler} therefore implies the following equality of
  Kähler differentials
  \begin{equation}\label{eq:bpXKahler}
    \bigl(\dK (a|_{V°}) \bigr)(\wtilde{σ}) = \big( \dK (g|_{V°}) \bigr)(τ°_\kahler).
  \end{equation}
\end{rem}

\subsubsection{Proof of Lemma~\ref*{lem:const}: construction of a differential form on $Z$}

We show that $τ°_\kahler$ can be extended from the open set $Z°$ to all of $Z$.
The following criterion, essentially shown in \cite{GKK08}, will be employed.

\begin{lem}[Regularity criterion for differential forms]\label{lem:regCrit}
  Setup as above.  If there exists a smooth, irreducible variety $\wtilde V$, a
  proper, generically finite surjective morphism $\wtilde g: \wtilde V → Z$ and
  a differential form $\wtilde{τ} ∈ H^0 \bigl( \wtilde V,\, Ω^p_{\wtilde V}
  \bigr)$ whose restriction to $\wtilde V° := \wtilde g^{-1}(Z°)$ agrees with
  the pull-back of $τ°_\kahler$,
  $$
  \bigl( \dK (g|_{V°}) \bigr) (τ°_\kahler) = \wtilde{τ}|_{V°},
  $$
  then there exists a Kähler differential $τ_\kahler ∈ H^0 \bigl( Z,\, Ω^p_Z
  \bigr)$ that extends $τ°_\kahler$.
\end{lem}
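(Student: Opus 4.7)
The plan is to reduce to the case of a Galois cover and then invoke the descent result of \cite{GKK08}. Since $Z$ is smooth by Assumption~\ref{aass:smooth}, the sheaf $Ω^p_Z$ is locally free, so it suffices to check that $τ°_\kahler$ extends regularly across the codimension-one components of $Z \setminus Z°$.

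First, I would replace $\wtilde V$ by a Galois cover. Choose a smooth, irreducible variety $W$ equipped with a proper, generically finite surjective morphism $\wtilde h : W → Z$ that factors through $\wtilde g : \wtilde V → Z$ and such that the induced extension of function fields $\bC(W) / \bC(Z)$ is Galois; write $G$ for its Galois group. Pulling $\wtilde{τ}$ back along the morphism $W → \wtilde V$ yields a regular form $\wtilde{τ}_W ∈ H^0\bigl(W,\, Ω^p_W\bigr)$ whose restriction to $W° := \wtilde h^{-1}(Z°)$ coincides with the pull-back $\bigl(\dK(\wtilde h|_{W°})\bigr)(τ°_\kahler)$.

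Next, I would symmetrise by setting
$$
σ_W := \frac{1}{|G|} \sum_{γ ∈ G} γ^* \wtilde{τ}_W \ ∈ \ H^0\bigl(W,\, Ω^p_W\bigr).
$$
Because the pull-back of the form $τ°_\kahler$ from the smooth base $Z°$ is tautologically $G$-invariant on $W°$, the averaged form $σ_W$ still extends this pull-back. By construction, however, $σ_W$ is now $G$-invariant on all of $W$.

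Finally, I would invoke the descent statement of \cite{GKK08}: a $G$-invariant regular Kähler differential on a smooth Galois cover of a smooth variety descends to a regular Kähler differential on the base. Applied to $σ_W$ and $\wtilde h : W → Z$, this produces the sought-after form $τ_\kahler ∈ H^0\bigl(Z,\, Ω^p_Z\bigr)$, and the defining property of the descent together with the fact that $σ_W|_{W°}$ is the pull-back of $τ°_\kahler$ shows that $τ_\kahler|_{Z°} = τ°_\kahler$. The main obstacle lies precisely at this last descent step: where $\wtilde h$ ramifies over $Z \setminus Z°$, a naive pushforward would only control poles up to the ramification index, and it is exactly the content of the criterion in \cite{GKK08} that no such poles actually arise once the form upstairs is regular and $G$-invariant.
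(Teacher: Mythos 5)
Your overall strategy --- reduce to a statement about finite covers and quote \cite{GKK08} for the hard step --- is the right one; indeed, the paper's proof consists of little more than citing \cite[Cor.~2.12(ii)]{GKK08}. But your reduction has a genuine gap: the descent statement you invoke at the end concerns \emph{finite} Galois covers, whereas the morphism $\wtilde h : W → Z$ you construct is only proper and generically finite. In particular $W/G$ need not equal $Z$ (it is only a proper birational modification of $Z$), the group $G = \mathrm{Gal}\bigl(\bC(W)/\bC(Z)\bigr)$ a priori acts on a smooth model $W$ only birationally --- so the average $\sum_{γ} γ^* \wtilde{τ}_W$ is not even defined without first passing to a $G$-equivariant resolution of the normalisation of $Z$ in the Galois closure --- and over points of the branch locus the fibres of $\wtilde h$ may be positive-dimensional. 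As written, the final step applies a theorem outside its hypotheses, and this is exactly where the content of the lemma sits.

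The missing idea, which is the one genuine step of the paper's proof, is to use that $Z$ is smooth by Additional Assumption~\ref{aass:smooth}, so that $Ω^p_Z$ is locally free, hence reflexive; one may therefore discard from $Z$ any closed subset of codimension at least two before extending $τ°_\kahler$. Since the locus of $Z$ over which the proper, generically finite morphism $\wtilde g$ has positive-dimensional fibres has codimension at least two, after this removal $\wtilde g$ becomes finite (proper and quasi-finite), and \cite[Cor.~2.12(ii)]{GKK08} then applies directly. Note also that this citation already covers arbitrary finite surjective morphisms between the relevant spaces --- no Galois closure, no invariance, no averaging is required --- so your symmetrisation step is an unnecessary detour rather than an error; the real defect is the unaddressed passage from ``proper and generically finite'' to ``finite''.
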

\begin{proof}
  Recalling from Assumption~\ref{aass:smooth} that $Z$ is smooth, the sheaf
  $Ω^p_Z$ is locally free, thus reflexive.  We are therefore free to shrink $Z$
  by removing a suitable set of codimension two, and assume without loss of
  generality that $\wtilde g$ is actually finite.  For finite morphisms between
  smooth spaces, Lemma~\ref{lem:regCrit} has already been shown in
  \cite[Cor.~2.12(ii)]{GKK08}.
\end{proof}

\begin{cor}[Regularity of $τ$]
  Setting as above.  Then there exists a Kähler differential $τ_\kahler ∈ H^0
  \bigl( Z,\, Ω^p_Z \bigr)$ that extends $τ°_\kahler$.
\end{cor}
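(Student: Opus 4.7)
The plan is to apply the regularity criterion of Lemma~\ref{lem:regCrit} directly, using the smooth covering constructed in Construction~\ref{cons:exV} as the required $\wtilde V$. All of the ingredients have in fact already been produced in the preceding steps.

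First I would invoke Construction~\ref{cons:exV} to obtain a smooth variety $V$, a surjective generically finite morphism $g : V \to Z$, and a morphism $a : V \to \wtilde X$ making Diagram~\eqref{eq:const} commute. To use Lemma~\ref{lem:regCrit}, what we need from $g$ in addition is properness; this can be arranged by choosing the resolution $\wtilde X \to X$ to be projective, so that $\wtilde X \times_X Z \to Z$ is proper, the desingularised component $\wtilde Y \to Z$ is projective, and cutting with $d$ general hyperplanes of a projective embedding of $\wtilde Y$ yields a $V$ with $g : V \to Z$ still projective, hence proper. Surjectivity is preserved because a general hyperplane intersects every $d$-dimensional fibre non-trivially, and the image of $V$ in $Z$ is closed by properness and dense by dimension count.

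Next I would set $\wtilde V := V$, $\wtilde g := g$, and define the candidate form as $\wtilde{τ} := (\dK a)(\wtilde{σ}) \in H^0\bigl( V,\, Ω^p_V \bigr)$. This makes sense as a genuine Kähler differential because both $V$ and $\wtilde X$ are smooth and $\wtilde{σ} \in H^0(\wtilde X, Ω^p_{\wtilde X})$ is a Kähler differential (the latter fact is the Extension Theorem~\ref{thm:extpb} applied on $\wtilde X$). The required compatibility on the open set $V° = \wtilde g^{-1}(Z°)$, namely
$$
\wtilde{τ}|_{V°} = \bigl(\dK (g|_{V°})\bigr)(τ°_\kahler),
$$
is then exactly the equality \eqref{eq:bpXKahler} recorded in Remark~\ref{rem:FGH}. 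With all hypotheses of Lemma~\ref{lem:regCrit} in hand, that lemma produces the desired extension $τ_\kahler \in H^0\bigl( Z,\, Ω^p_Z \bigr)$.

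The only non-mechanical point in the argument is the properness of $g : V \to Z$; the identification of $\wtilde{τ}|_{V°}$ with the pull-back of $τ°_\kahler$ is already done in Remark~\ref{rem:FGH}, and the passage from torsion-free to Kähler differentials on the smooth ambient spaces causes no trouble. Thus I would expect the main obstacle to be simply choosing the auxiliary data ($\wtilde X$ projective, and general hyperplanes in a projective embedding of $\wtilde Y$) to guarantee that $g$ is genuinely proper, surjective and generically finite, rather than merely dominant. Once this is arranged, the corollary is immediate from Lemma~\ref{lem:regCrit}.
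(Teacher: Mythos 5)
Your proposal is correct and follows essentially the same route as the paper: take the smooth covering $V$ with generically finite surjective $g$ from Construction~\ref{cons:exV}, observe via Equation~\eqref{eq:bpXKahler} that $\bigl(\dK(g|_{V°})\bigr)(τ°_\kahler)$ extends to the global form $(\dK a)(\wtilde{σ})$ on $V$, and apply Lemma~\ref{lem:regCrit}. Your additional care in arranging properness of $g$ (needed since Lemma~\ref{lem:regCrit} shrinks $Z$ to make $\wtilde g$ finite) addresses a point the paper leaves implicit, and your fix --- choosing the resolution and desingularisations projective so that $V \to Z$ is projective --- is the right one.
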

\begin{proof}
  Recall from Remark~\ref{rem:exV} there always exist smooth varieties $V$ and
  morphisms $a$, $g$ forming a Diagram as in \eqref{eq:const}, where $g$ is
  generically finite.  Setting $V° := g^{-1}(Z°)$ as before,
  Equation~\eqref{eq:bpXKahler} of Remark~\ref{rem:FGH} asserts that the
  differential form $\bigl( \dK (g|_{V°}) \bigr)(τ°_\kahler)$ initially defined
  only on $V°$ extends to give a regular differential form on all of $V$, namely
  $\bigl(\dK (a|_{V°}) \bigr)(\wtilde{σ})$.  Lemma~\ref{lem:regCrit} therefore
  applies.
\end{proof}

\subsubsection{Proof of Lemma~\ref*{lem:const}: end of proof}
\label{sssec:polconstend}

Given a smooth variety $V$ and morphisms $a$, $g$ forming a Diagram as in
\eqref{eq:const}, we need to show that Equation~\eqref{eq:const2} holds.  We
have seen in Lemma~\ref{lem:bpX} and Remark~\ref{rem:FGH} that the equation
holds on $Z°$.  Since $Z$ is smooth by Assumption~\ref{aass:smooth}, the sheaf
$Ω^p_Z$ is torsion-free.  Since $Z°$ is open in Zariski topology, hence dense,
Equation~\eqref{eq:const2} holds everywhere, as required.  This finishes the
proof of Lemma~\ref{lem:const}.  \qed

\subsection{Proof of Proposition~\ref*{prop:const}}
\label{ssec:popropconst}

To prove Proposition~\ref{prop:const}, assume that we are given a morphism of
klt base spaces, $f: Z → X$, and a reflexive $p$-form on $X$, say $σ ∈ H^0\bigl(
X,\, Ω^{[p]}_X \bigr)$.

\subsubsection{Proof of Proposition~\ref*{prop:const}: Existence of the form $τ$}

The weak version of Proposition~\ref{prop:const} shown in Lemma~\ref{lem:const}
asserts the existence of a resolution map $π: \wtilde X → X$ and a reflexive
$p$-form $τ ∈ H^0\bigl( Z,\, Ω^{[p]}_Z \bigr)$ satisfying universal property
\eqref{eq:const2}.  To prove Proposition~\ref{prop:const}, we will show that the
form $τ$ satisfies the stronger requirements of Proposition~\ref{prop:const}.
To this end, assume we are given an arbitrary resolution of singularities $π' :
\wtilde X' → X$ with form $\wtilde{σ}' ∈ H^0 \bigl( \wtilde X',\, Ω^p_{\wtilde
  X'} \bigr)$, a smooth variety $V'$ and morphisms $a'$, $g'$ forming a
commutative diagram as in~\eqref{eq:const}.  We need to show that the following
equation holds,
\begin{equation}\label{eq:const4}
  \dK a'(\wtilde{σ}') = \dK g'(τ|_{Z_{\reg}}) ∈ H^0 \bigl( V,\, Ω^p_V \bigr).
\end{equation}
To this end, choose a component of the fibred product $\wtilde X' ⨯_X \wtilde X$
that surjects onto $X$, and let $\what X$ be a desingularisation of that
component.  We obtain a resolution of singularities $\what{π} : \what X → X$
that dominates both $\wtilde X_1$ and $\wtilde X_1$.  In a similar vein, choose
a component of $V' ⨯_X \what X$ that surjects onto $V'$.  Desingularising, we
obtain a smooth variety $V$ and a large commutative diagram of morphisms between
varieties,
$$
\xymatrix{ %
  &&&&&& \what X \ar[d]_{p'} \ar@{=}[r] & \what X \ar[d]_{p} \ar@/^4mm/[dd]^{\what{π}}\\
  &&&&&& \wtilde X' \ar[d]_{π'} & \wtilde X \ar[d]_{π}\\
  V \ar@/^6mm/[rrrrrruu]^{a''} \ar[rr]_{g''\text{, surjective}} && V' \ar@/^3mm/[rrrru]^{a'} \ar[rr]_{g'\text{, surjective}} && Z_{\reg} \ar[rr]_{f|_{Z_{\reg}}} && X \ar@{=}[r]& X.   }
$$
Observing that the pull-back of the forms $\wtilde{σ}$ and $\wtilde{σ}'$ to the
common resolution $\what X$ agree on the preimage of $X_{\reg}$, we obtain an
equality of differential forms,
\begin{equation}\label{eq:pbthX}
  \dK p'(\wtilde{σ}') = \dK p(\wtilde{σ}).
\end{equation}
An application of Lemma~\ref{lem:const} with $a=p ◦ a''$, $g = g' ◦ g''$
therefore yields the following chain of equalities,
\begin{align*}
  \bigl( \dK g'' ◦ \dK g' \bigr) (τ) & = \bigl( \dK a'' ◦  \dK p \bigr)  (\wtilde{σ}) && \text{Lemma~\ref{lem:const}}\\
  & =  \bigl( \dK a'' ◦ \dK p' \bigr) (\wtilde{σ}') && \text{Equation~\eqref{eq:pbthX}}\\
  & =  \bigl( \dK g'' ◦ \dK a' \bigr) (\wtilde{σ}') && \text{Equality $p' ◦ a'' = a' ◦ g''$}
\end{align*}
Equality~\eqref{eq:const4} now follows because $g''$ is surjective and $\dK g''$
therefore injective.  We have thus shown that $τ$ satisfies the requirements of
Proposition~\ref{prop:const}.

\subsubsection{Proof of Proposition~\ref*{prop:const}: Uniqueness of the form $τ$}

Remark~\ref{rem:exV} asserts the existence of a diagram as in \eqref{eq:const}.
Choosing one such diagram, it follows from surjectivity of $g$ that the
restriction-and-pull-back map
$$
H^0 \bigl( Z,\, Ω^{[p]}_Z \bigr) → H^0 \bigl( V,\, Ω^p_V \bigr),
\quad μ \mapsto \dK g(μ|_{Z_{\reg}})
$$
is injective.  Equation~\eqref{eq:const2} therefore determines the reflexive form
$τ$ uniquely.  This finishes the proof of Proposition~\ref{prop:const} \qed

\section{Proof of Theorem~\ref*{thm:PB-thmA}}
\label{sec:pfPB}

\subsection{Proof of Theorem~\ref*{thm:PB-thmA}: Uniqueness of the functor}
\label{ssec:pfPBuq}

Assume we are given two functors $\drefl^1$ and $\drefl^2$ that satisfy the
conditions of Theorem~\ref{thm:PB-thmA}.  Given any morphism of klt base spaces,
$f: Z → X$, denote the two associated pull-back maps of reflexive differentials
by
$$
\drefl^1 f : H^0 \bigl( X,\, Ω^{[p]}_X \bigr) → H^0 \bigl( Y,\, Ω^{[p]}_Y \bigr) \text{\quad and \quad}
\drefl^2 f : H^0 \bigl( X,\, Ω^{[p]}_X \bigr) → H^0 \bigl( Y,\, Ω^{[p]}_Y \bigr),
$$
respectively.  To prove Theorem~\ref*{thm:PB-thmA}, we need to show that
\CounterStep
\begin{equation}\label{eq:uniq}
  \drefl^1 f = \drefl^2 f.
\end{equation}
This equality will be established in the remainder of the present
Section~\ref{ssec:pfPBuq}.

\subsubsection{Proof of uniqueness in special cases}

Before proving Equality~\eqref{eq:uniq} in general, we treat two special cases
first.

\begin{lem}\label{lem:X1}
  If there exists a dense open subset $Z° ⊆ Z$ with inclusion $i : Z° → Z$ such
  that
  \begin{equation}\label{eq:XD}
    \drefl^1(f ◦ i) = \drefl^2(f ◦ i),
  \end{equation}
  then Equation~\eqref{eq:uniq} holds.
\end{lem}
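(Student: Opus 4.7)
The plan is to use the functoriality (composition law) of $\drefl^1$ and $\drefl^2$ together with the compatibility with Kähler differentials, applied to the open immersion $i : Z° \to Z$. First, since both $\drefl^1$ and $\drefl^2$ are contravariant functors satisfying the composition law, we may write, for $k = 1, 2$,
$$
\drefl^k(f \circ i) \;=\; \drefl^k(i) \circ \drefl^k(f).
$$
Equation~\eqref{eq:XD} then reads $\drefl^1(i) \circ \drefl^1(f) = \drefl^2(i) \circ \drefl^2(f)$, and everything comes down to analysing $\drefl^k(i)$.

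Next, I would identify $\drefl^1(i)$ and $\drefl^2(i)$ with the standard restriction map $H^0(Z,\, Ω^{[p]}_Z) \to H^0(Z°,\, Ω^{[p]}_{Z°})$. Since $i$ is an open immersion between klt base spaces and $Z°$ is dense in $Z$, the open set $(Z°)_{\reg} \cap i^{-1}(Z_{\reg}) = Z° \cap Z_{\reg}$ is non-empty. The commutative diagram \eqref{eq:BCD} of Theorem~\ref{thm:PB-thmA} applied to $i$ then says that, after restricting to $Z° \cap Z_{\reg}$, both $\drefl^1(i)$ and $\drefl^2(i)$ coincide with $\dK(i|_{Z° \cap Z_{\reg}})$, which is nothing but the restriction map of Kähler differentials. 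Because the vertical arrows $\restr_Z$, $\restr_{Z°}$ in \eqref{eq:BCD} are isomorphisms (Remark~\ref{rem:43}), it follows that $\drefl^1(i) = \drefl^2(i)$ and that this common map is precisely the standard restriction map on reflexive differentials.

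Then I would check that this restriction map is injective. Under the identification $H^0(Z,\, Ω^{[p]}_Z) = H^0(Z_{\reg},\, Ω^p_{Z_{\reg}})$, restricting a section to $Z°$ corresponds to restricting a Kähler differential from the smooth irreducible variety $Z_{\reg}$ to the dense open $Z° \cap Z_{\reg}$. Since $Ω^p_{Z_{\reg}}$ is locally free on the irreducible smooth variety $Z_{\reg}$, hence torsion-free, this restriction is injective. Consequently, $\drefl^k(i)$ is injective for $k = 1, 2$, and Equation~\eqref{eq:XD} combined with the factorisation above yields
$$
\drefl^1(i) \circ \drefl^1(f) \;=\; \drefl^1(i) \circ \drefl^2(f),
$$
from which $\drefl^1(f) = \drefl^2(f)$ follows by cancellation. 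The only step requiring care is making sure the compatibility diagram \eqref{eq:BCD} indeed forces $\drefl^k(i)$ to be the honest restriction map (not just equal to it after restriction to $Z° \cap Z_{\reg}$); but this is immediate because reflexive sections on $Z°$ are themselves determined by their values on $(Z°)_{\reg}$, so equality after one more restriction propagates back.
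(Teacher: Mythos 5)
Your proposal is correct and follows essentially the same route as the paper: factor $\drefl^k(f\circ i)$ by contravariant functoriality, use compatibility with Kähler differentials (Diagram~\eqref{eq:BCD}) to identify both $\drefl^1(i)$ and $\drefl^2(i)$ with the restriction map, observe that this map is injective because a reflexive form is determined by its values on a dense open subset of the smooth locus, and cancel.
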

\begin{proof}
  By functoriality, Assumption~\eqref{eq:XD} translates as
  $$
  \drefl^1 i ◦ \drefl^1 f = \drefl^2 i ◦ \drefl^2 f.
  $$
  To prove Lemma~\ref{lem:X1}, it will therefore suffice to show that $\drefl^1
  i = \drefl^2 i$, and that these maps are injective.  That, however, follows
  immediately from compatibility with Kähler differentials, as formulated in
  Diagram~\eqref{eq:BCD} of Theorem~\ref{thm:PB-thmA}: given any reflexive form
  $σ ∈ H^0 \bigl( Z,\, Ω^{[p]}_Z \bigr)$, then
  $$
  \bigl( \drefl^1 i \bigr) (σ) \bigr|_{Z°_{\reg}} =
  \bigl( \dK i \bigr) (σ |_{Z°_{\reg}} ) =
  \bigl( \drefl^2 i \bigr) (σ) \bigr|_{Z°_{\reg}}.
  $$  
  Observe that this determines $\bigl( \drefl^\bullet i \bigr) (σ)$ uniquely.
\end{proof}

\begin{lem}\label{lem:X2}
  If $f$ is surjective, then Equation~\eqref{eq:uniq} holds.
\end{lem}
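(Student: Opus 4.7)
The plan is to reduce the surjective case to the already-established Lemma~\ref{lem:X1}. Introduce the open set
$$
Z° \;:=\; Z_{\reg} \cap f^{-1}\bigl(X_{\reg}\bigr) \;\subseteq\; Z.
$$
Since $f$ is surjective and $Z$ is irreducible (being a klt base space), $f^{-1}(X_{\reg})$ is a non-empty, hence dense, open subset of $Z$; intersecting with the dense open $Z_{\reg}$ preserves non-emptiness, so $Z°$ is a non-empty Zariski-open subset of $Z$. Being smooth and irreducible, $Z°$ is itself a klt base space, and the inclusion $i : Z° \hookrightarrow Z$ is a morphism in the category of klt base spaces.

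I plan to verify the hypothesis of Lemma~\ref{lem:X1} for this $Z°$, i.e.\ the equality $\drefl^1(f \circ i) = \drefl^2(f \circ i)$. Set $h := f \circ i : Z° \to X$. By construction $Z°$ is smooth and $h(Z°) \subseteq X_{\reg}$, so the auxiliary set $(Z°)_{\reg} \cap h^{-1}(X_{\reg})$ appearing in the compatibility condition of Theorem~\ref{thm:PB-thmA} equals the whole of $Z°$. Writing out Diagram~\eqref{eq:BCD} for $h$ and noting that the restriction map $\restr_{Z°}$ is an identity under the canonical identification $\Omega^{[p]}_{Z°} = \Omega^p_{Z°}$, the diagram reduces to
$$
\drefl^k h(\sigma) \;=\; \dK h\bigl(\sigma|_{X_{\reg}}\bigr) \qquad \text{for every } \sigma \in H^0\bigl(X, \Omega^{[p]}_X\bigr) \text{ and } k \in \{1, 2\}.
$$
Because the right-hand side does not involve the choice of functor, one obtains $\drefl^1 h = \drefl^2 h$, and Lemma~\ref{lem:X1} then yields $\drefl^1 f = \drefl^2 f$, as required.

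The argument is essentially tautological: the compatibility axiom of Theorem~\ref{thm:PB-thmA} pins the pull-back down uniquely whenever the morphism's image meets the regular locus of the target, and any surjective morphism can be restricted to a dense open subdomain where this happens. The one genuine geometric input is the non-emptiness of $Z°$, which uses surjectivity of $f$ together with irreducibility of $Z$; everything else is a formal consequence of Lemma~\ref{lem:X1} and the defining compatibility axiom shared by $\drefl^1$ and $\drefl^2$.
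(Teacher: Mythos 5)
Your argument is correct and coincides with the paper's own proof: both establish non-emptiness of $Z° = Z_{\reg} ∩ f^{-1}(X_{\reg})$ from surjectivity, use the compatibility axiom \eqref{eq:BCD} to pin down $\drefl^1(f ◦ i) = \drefl^2(f ◦ i)$ on the dense open $Z°$, and then invoke Lemma~\ref{lem:X1}. You merely spell out in more detail the step the paper compresses into ``Compatibility with Kähler differentials then asserts\dots''.
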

\begin{proof}
  It is clear by assumption that the open set $Z° := Z_{\reg} ∩ f^{-1}(X_{\reg})
  ⊆ Z$ is not empty.  Compatibility with Kähler differentials then asserts that
  $\drefl^1(f ◦ i) = \drefl^2(f ◦ i)$, where $i: Z° → Z$ is the obvious open
  immersion.  Lemma~\ref{lem:X1} therefore applies.
\end{proof}

\subsubsection{Proof of uniqueness in general}

Next, we treat the general case.  To this end, consider a commutative diagram
$$
\xymatrix{ %
  \wtilde Z \ar[r]^{\wtilde f} \ar[d]_{π_Z} & \wtilde X \ar[d]_{π}^{\txt{\scriptsize strong resolution of\\\scriptsize singularities}} \\
  Z \ar[r]_{f} & X }
$$
where $\wtilde Z$ is a desingularisation of a component of the fibre product $Z
⨯_X \wtilde X$ that surjects onto $Z$.  Using Lemma~\ref{lem:X1}, we may replace
$Z$ with a dense open subset and assume without loss of generality that $Z$ is
smooth.  The following identities are now immediate consequences of
compatibility with Kähler differentials.  \CounterStep
\begin{align}
  \label{eq:A1} \drefl^1 π & = \drefl^2π && \text{by Lemma~\ref{lem:X2}}\\
  \label{eq:A2} \drefl^1 \wtilde f & = \drefl^2 \wtilde f  = \dK \wtilde f && \text{by~\eqref{eq:BCD} since $\wtilde Z$ and $\wtilde X$ are smooth}\\
  \label{eq:A3} \drefl^1 π_Z & = \drefl^2 π_Z  = \dK π_Z && \text{by~\eqref{eq:BCD} since $\wtilde Z$ and $Z$ are smooth}
\end{align}
The equalities have several consequences.  First, we see that
\begin{align*}
  \drefl^1 (π ◦ \wtilde f) & = (\drefl^1\wtilde f) ◦ (\drefl^1 π) && \text{functoriality of $\drefl^1$} \\
  & = (\drefl^2\wtilde f) ◦ (\drefl^2 π) && \text{by~\eqref{eq:A1}, \eqref{eq:A2} } \\
  & = \drefl^2 (π ◦ \wtilde f) && \text{functoriality of $\drefl^2$}.
\end{align*}
Since $π ◦ \wtilde f = f ◦ \wtilde{π}_Z$, this immediately implies that
\begin{equation}\label{eq:GFK}
  (\drefl^1 π_Z) ◦ (\drefl^1 f) = (\drefl^2 π_Z) ◦ (\drefl^2 f)  
\end{equation}
But since $\drefl^1π_Z$ and $\drefl^2π_Z$ are both equal to the standard
pull-back of Kähler differentials, and since $π_Z$ is surjective, it is clear
that two forms $σ$, $τ$ on $Z$ agree if and only if $\drefl^1 π_Z (σ) = \drefl^2
π_Z (τ)$.  Equation~\eqref{eq:GFK} therefore implies Assertion~\eqref{eq:uniq},
finishing the proof of the uniqueness statement in Theorem~\ref{thm:PB-thmA}.
\qed

\subsection{Proof of Theorem~\ref*{thm:PB-thmA}: Existence of the functor}

Given a morphism $f : Z → X$ of klt base spaces, we define an associated
pull-back mapping of reflexive differential forms,
$$
\drefl f : H^0 \bigl( X,\, Ω^{[p]}_X \bigr) → H^0 \bigl( Z,\,
Ω^{[p]}_Z \bigr),
$$
by sending a given reflexive form $σ ∈ H^0 \bigl( X,\, Ω^{[p]}_X \bigr)$ to the
unique form $τ ∈ H^0 \bigl( Z,\, Ω^{[p]}_Z \bigr)$ whose existence is asserted
in Proposition~\ref{prop:const}.  We need to show that the so-defined $\drefl$
satisfies the composition law.  To this end, consider a sequence of morphisms
between klt base spaces, say $f' : Z' → Z$ and $f: Z → X$.  We need to show that
\CounterStep
\begin{equation}\label{eq:Aa}
  \bigl( \drefl f' ◦ \drefl f \bigr)(σ) \,\,\stackrel{!}{=}\,\, \bigl( \drefl(f ◦ f') \bigr) (σ) \quad
  \text{for all } σ ∈ H^0 \bigl( X,\, Ω^{[p]}_X \bigr).
\end{equation}
To prove~\eqref{eq:Aa}, choose desingularisation $p: \wtilde Z → Z$, choose a
component of $\wtilde Z ⨯_X \wtilde X$ that surjects onto $\wtilde Z$, and let
$\what Z$ be a desingularisation of this component.  Further, choose a component
of $\what Z ⨯_Z Z'$ that surjects onto $Z'$, and let $V$ be a desingularisation
of this component.  In summary, we have constructed smooth spaces $\wtilde Z$,
$\what Z$ and $V$ fitting into a commutative diagram
\begin{equation}\label{eq:xxA}
  \begin{split}
    \xymatrix{ %
      &&&& \what Z \ar[rr]^{h} \ar[d]_{q}^{\txt{\scriptsize surjective}} && \wtilde X \ar[dd]_{π}^{\txt{\scriptsize resolution of\\\scriptsize singularities}}\\
      &&&& \wtilde Z \ar[d]_{p}^{\txt{\scriptsize resolution of\\\scriptsize singularities}} \\
      V \ar[rr]_{g\text{, surjective}} \ar@/^6mm/[rrrruu]^{\what a} \ar@/^4mm/[rrrru]_a && Z' \ar[rr]_{f'} && Z \ar[rr]_f && X.
    }
  \end{split}
\end{equation}
The following reflexive forms, defined as explained above using
Proposition~\ref{prop:const}, will appear in the computation
$$
\wtilde{σ} := \drefl π(σ) \qquad τ := \drefl f(σ) \qquad
\wtilde{τ} := \drefl p(τ).
$$
We have seen in Remark~\ref{rem:exV2} that this definition implies that
$\wtilde{σ}$ is the unique differential form on $\wtilde X$ that agrees with the
pull-back $σ$ at points where $π$ is isomorphic.  The analogue statement holds
for $\wtilde{τ}$ and $τ$, so that our notation is consistent with the notation
used earlier.  A repeated application of Proposition~\ref{prop:const}, using the
fact that two reflexive forms on a normal space agree if they agree on the
smooth locus, now shows the following
\begin{align*}
  \drefl g \bigl( (\drefl f' ◦ \drefl f)(σ) \bigr) & =  \drefl g \bigl( \drefl f'(τ)\bigr)  && \text{Definition of $τ$}\\
  & = \drefl a(\wtilde{τ}) && \text{Proposition~\ref{prop:const} for $f'$} \\
  & = \bigl( \drefl \what a ◦ \drefl q \bigr) (\wtilde{τ}) && \text{Remark~\ref{rem:exV2}, Diag.~\eqref{eq:xxA}}\\
  & = \bigl( \drefl \what a ◦ \drefl h \bigr) (\wtilde{σ}) && \text{Proposition~\ref{prop:const} for $f$}\\
  & = \drefl g \bigl( \drefl (f◦ f') (\wtilde{σ}) \bigr) && \text{Proposition~\ref{prop:const} for $f ◦ f'$}.
\end{align*}
Since $g$ is surjective, Remark~\ref{rem:exV2} immediately implies that $\drefl
g$ is injective.  Equation~\eqref{eq:Aa} therefore follows from the computation.
In summary, we have shown that the definition of pull-back given above does
satisfy the composition law.  This finishes the proof of the existence statement
in Theorem~\ref{thm:PB-thmA}.  \qed

\part{Appendix}
\appendix

\section{Torsion sheaves on reducible spaces}
\label{app:A}

In Parts~\ref{part:1} and \ref{part:2}, we need to discuss torsion sheaves and
torsion-free sheaves on reducible spaces.  While no fundamental issues arise, it
seems that almost all standard books, such as \cite{Ha77}, \cite{EGA1} or
\cite{CAS} restrict themselves to the irreducible case.  The few existing
references touch the subject only very briefly.  For completeness' sake, we have
thus chosen to recall the relevant definitions and to include proofs of all the
properties used in this paper.

\subsection{The definition of torsion sheaves}

We briefly recall the definition of torsion sheaves given in
\cite[I.8]{EGA1-Reedition}\footnote{The definition presented here is found in
  \cite{EGA1-Reedition} but not in \cite{EGA1}.  At the time of writing this
  paper, the book \cite{EGA1-Reedition} was not listed on MathSciNet and did not
  show on \url{www.springer.com}.}, see also \cite[§20.1]{EGA4-4}.

\begin{notation}[\protect{Sheaf of rational functions, \cite[I.8.3]{EGA1-Reedition}}]\label{not:ratfct}
  Let $X$ be a reduced, quasi-projective scheme.  We denote the sheaf of
  rational functions on $X$ by $\sR_X$.
\end{notation}

\begin{explanation}
  In the setting of Notation~\ref{not:ratfct}, the sheaf of rational functions
  is quasi-coherent.  If $X° ⊆ X$ is an affine open set, say $X° = \Spec A$,
  then $\sR_X(X°)$ is the ring of rational functions on $X$.  This ring is
  isomorphic to the localisation $S^{-1} A$, where $S$ is the multiplicatively
  closed set of non-zerodivisors in $A$.
\end{explanation}

\begin{defn}[\protect{Torsion sheaf, \cite[I.8.4]{EGA1-Reedition}}]\label{defn:torsion}
  Let $X$ be a reduced, quasi-projective scheme and $ψ: \sO_X → \sR_X$ the
  natural inclusion of the structure sheaf into the sheaf of rational functions.
  Given a coherent sheaf $\sF$ of $\sO_X$-modules consider the natural map
  $ψ_{\sF}$ given as the composition of the following maps,
  $$
  \sF \xrightarrow{ \;\cong\; } \sF \otimes_{\sO_X} \sO_X
  \xrightarrow{\;\Id_{\sF} \otimes_{\sO_X} ψ\;} \sF \otimes_{\sO_X} \sR_X.
  $$
  Define the \emph{torsion subsheaf} of $\sF$ as $\tor \sF := \ker ψ_{\sF}$.
  The sheaf $\sF$ is called \emph{torsion sheaf} if $ψ_{\sF} = 0$, and
  \emph{torsion-free} if $ψ_{\sF}$ is injective.
\end{defn}

\begin{explanation}\label{expl:torgenpts}
  In the setting of Definition~\ref{defn:torsion}, let $X° ⊆ X$ be any affine
  open set, say $X° = \Spec A$.  Denoting the $A$-module associated with the
  sheaf $\sF$ by $F := \sF(X°)$, the sheaf $\sF \otimes_{\sO_X} \sR_X$ of
  Definition~\ref{defn:torsion} is expressed as follows,
  $$
  \sF \otimes_{\sO_X} \sR_X =  \sF^\sim  \otimes_{\sO_X} (S^{-1}A)^\sim = (F  \otimes_A S^{-1}A)^\sim = (S^{-1}F)^\sim.
  $$
  In summary, we see that a section $σ ∈ \sF(X°)$ is a section of the torsion
  subsheaf $\tor \sF$ if and only if there exists a non-zerodivisor $f ∈
  \sO_X(X°)$ that annihilates it.  In particular, $σ$ is a section of the
  torsion subsheaf if and only if there exists dense open $U ⊆ X°$ such that
  $σ|_U = 0$.
\end{explanation}

\begin{rem}[Torsion-free sheaves that are zero on open sets]\label{rem:tfszoos}
  Torsion-free sheaves on reducible spaces can restrict to the zero sheaf on a
  Zariski-open set, as long as the set is not dense.
\end{rem}

\subsection{Elementary properties}

Definition~\ref{defn:torsion} ensures that essentially all properties known from
torsion-free sheaves on irreducible spaces also hold in the more general setting
of reduced quasi-projective schemes ---except perhaps the feature mentioned in
Remark~\ref{rem:tfszoos} above.  The following properties have been used in this
paper.

\begin{prop}[Universal property of torsion-freeness]\label{prop:UPTf}
  Let $X$ be a reduced quasi-projective scheme, and $φ : \sF → \sG$ a morphism
  of coherent sheaves of $\sO_X$-modules.  Then there exists a unique morphism
  $\widecheck{φ}$ making the following diagram commutative,
  $$
  \xymatrix{ %
    \sF \ar[r]^{φ} \ar[d]_{\text{quotient}} & \sG \ar[d]^{\text{quotient}} \\
    \factor \sF.\tor.  \ar[r]_{\widecheck{φ}} & \factor \sG.{\tor.}.  }
  $$
  If $ψ$ is injective, then $\widecheck{φ}$ is injective as well.
\end{prop}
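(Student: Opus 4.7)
The plan is to reduce everything to a routine check on stalks or on sections over affine opens, using the explicit description from Explanation~\ref{expl:torgenpts}: a section of a coherent sheaf is a torsion section if and only if it is annihilated by some non-zerodivisor of the structure sheaf. Given such a characterisation, the proof has three clearly separated parts: producing $\widecheck{φ}$, verifying its uniqueness, and treating the injectivity statement.

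First I would show that $φ$ sends $\tor \sF$ into $\tor \sG$. Working locally on an affine open $U = \Spec A$, let $s \in \sF(U)$ be a section in $\tor \sF$, and pick a non-zerodivisor $f \in A$ with $f \cdot s = 0$. Since $φ$ is $\sO_X$-linear, $f \cdot φ(s) = φ(f \cdot s) = 0$, and the same non-zerodivisor annihilates $φ(s)$; hence $φ(s) \in \tor \sG$. Consequently the composition of $φ$ with the quotient map $\sG \twoheadrightarrow \sG/\tor$ vanishes on $\tor \sF$, and the universal property of cokernels (applied to the quotient $\sF \twoheadrightarrow \sF/\tor$) produces a unique sheaf morphism $\widecheck{φ} : \sF/\tor \to \sG/\tor$ making the square commute. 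Uniqueness here is automatic since $\sF \twoheadrightarrow \sF/\tor$ is an epimorphism.

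For the injectivity clause (reading the hypothesis ``$ψ$ is injective'' as ``$φ$ is injective'', since there is no other morphism in sight), I would again argue locally. Let $\bar s \in (\sF/\tor)(U)$ satisfy $\widecheck{φ}(\bar s) = 0$, and lift $\bar s$ to some $s \in \sF(U)$ after possibly shrinking $U$. Commutativity of the square forces $φ(s) \in \tor \sG(U)$, so there is a non-zerodivisor $f \in A$ with $f \cdot φ(s) = 0$. Then $φ(f \cdot s) = 0$, and injectivity of $φ$ gives $f \cdot s = 0$, i.e., $s \in \tor \sF(U)$; hence $\bar s = 0$.

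The only step that requires any care is the first one, because the definition of torsion on a reducible scheme involves the sheaf of rational functions $\sR_X$ rather than a single function field. However, this difficulty dissolves once one uses Explanation~\ref{expl:torgenpts} to characterise torsion sections by annihilation under a non-zerodivisor: after that reduction the computations become formally identical to the irreducible case, and no further subtlety is needed.
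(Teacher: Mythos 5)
Your proof is correct. It takes a slightly more hands-on route than the paper's one-line argument: the paper identifies $\sF/\tor = \Image ψ_{\sF}$ and $\sG/\tor = \Image ψ_{\sG}$ inside $\sF\otimes_{\sO_X}\sR_X$ and $\sG\otimes_{\sO_X}\sR_X$ and obtains $\widecheck{φ}$ directly from functoriality (right-exactness) of $-\otimes_{\sO_X}\sR_X$, whereas you verify locally, via the non-zerodivisor characterisation of Explanation~\ref{expl:torgenpts}, that $φ(\tor\sF)\subseteq\tor\sG$ and then invoke the universal property of the quotient. These are two faces of the same computation — the Explanation is itself extracted from the tensor description — but your version has the advantage of making the injectivity clause completely explicit: the paper leaves it to the reader, and one either argues as you do with annihilators, or notes that $\sR_X$ is a sheaf of localisations, hence flat, so that $φ\otimes\Id_{\sR_X}$ is injective and restricts to an injection between the images. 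Your reading of the misprinted hypothesis ``$ψ$ is injective'' as ``$φ$ is injective'' is the intended one, and your precaution that a section of $\sF/\tor$ only lifts to $\sF$ after shrinking the open set is exactly the point where a careless local argument would go wrong; since injectivity may be checked on a cover, this is harmless.
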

\begin{proof}
  Observing that $\sF/\tor = \Image ψ_{\sF}$ and $\sG/\tor = \Image ψ_{\sG}$,
  the claim is immediate from right-exactness and from universal properties of
  the tensor product.
\end{proof}

\begin{cor}[Subsheaves of torsion-free sheaves are torsion-free]\label{cor:sstf}
  Let $X$ be a reduced quasi-projective scheme, and $\sF$ a torsion-free
  coherent sheaf of $\sO_X$-modules.  If $\sG$ is any subsheaf of
  $\sO_X$-modules, then $\sG$ is likewise torsion-free.  \qed
\end{cor}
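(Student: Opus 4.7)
The plan is to chase injectivity through the naturality square associated with the inclusion $\iota\colon \sG \hookrightarrow \sF$. Functoriality of the construction $\sF \mapsto \sF \otimes_{\sO_X} \sR_X$ immediately gives a commutative diagram
$$
\xymatrix{
\sG \ar[r]^{\iota} \ar[d]_{\psi_{\sG}} & \sF \ar[d]^{\psi_{\sF}} \\
\sG \otimes_{\sO_X} \sR_X \ar[r]_{\iota \otimes \Id} & \sF \otimes_{\sO_X} \sR_X,
}
$$
where $\psi_{\sG}$ and $\psi_{\sF}$ are the canonical maps appearing in Definition~\ref{defn:torsion}.

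The key observation is that the composition along the top-right corner, namely $\psi_{\sF} \circ \iota\colon \sG \to \sF \otimes_{\sO_X} \sR_X$, is injective: $\iota$ is injective by hypothesis (as $\sG$ is a subsheaf of $\sF$), and $\psi_{\sF}$ is injective by the assumption that $\sF$ is torsion-free. By commutativity of the square, this composition also factorises as $(\iota \otimes \Id) \circ \psi_{\sG}$. Since a composition can only be injective if its first factor is injective, we conclude that $\psi_{\sG}$ is injective, which is exactly the definition of $\sG$ being torsion-free.

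There is no real obstacle here: the argument avoids any appeal to flatness of $\sR_X$ over $\sO_X$ (which would be the natural alternative route for showing that $\iota \otimes \Id$ itself is injective), and simply exploits the elementary fact that injectivity propagates backwards through compositions. The only thing that needs to be verified is the commutativity of the square, which is a formal consequence of $\psi_{(-)}$ being a natural transformation from the identity functor to $(-) \otimes_{\sO_X} \sR_X$.
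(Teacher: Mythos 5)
Your argument is correct, and it is essentially the argument the paper has in mind: the corollary is left as immediate (note the \qed with no written proof), the intended reasoning being exactly that the injective composition $\sG \into \sF \xrightarrow{\psi_{\sF}} \sF \otimes_{\sO_X} \sR_X$ factors through $\psi_{\sG}$, forcing $\psi_{\sG}$ to be injective. Your verification of the naturality square and the observation that the first factor of an injective composition is injective are both sound, so there is nothing to add.
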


\begin{prop}[\protect{Push-forward of torsion-free is torsion-free, compare \cite[I.~Prop.~8.4.5]{EGA1-Reedition}}]\label{prop:pftf}
  Let $X$ be a reduced quasi-projective scheme.  If $ι : X_0 → X$ denotes the
  inclusion map of one irreducible component, and if $\sF$ is a torsion-free
  coherent sheaf on $X_0$, then $ι_* \sF$ is a torsion-free coherent sheaf on
  $X$.
\end{prop}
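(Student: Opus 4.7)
The plan is to verify the torsion-free condition directly from the local characterisation given in Explanation~\ref{expl:torgenpts}: a section is torsion precisely when it vanishes on a dense open subset of its domain. Since the statement is local, I would first reduce to the affine setting, taking an affine open $X° = \Spec A \subseteq X$, writing $X_0 ∩ X° = \Spec (A/\mathfrak{p})$ for the appropriate minimal prime $\mathfrak{p} ⊂ A$, and considering a section
$$
σ ∈ (ι_* \sF)(X°) = \sF(X_0 ∩ X°).
$$
The key observation is that such a $σ$ is torsion in the sheaf $ι_* \sF$ on $X$ if and only if it is already torsion in $\sF$ on $X_0$; the proposition then follows at once from the torsion-freeness hypothesis on $\sF$.

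The main work lies in the ``only if'' direction of this equivalence. Suppose $σ$ is torsion when regarded as a section of $ι_* \sF$ on $X°$. By Explanation~\ref{expl:torgenpts}, there exists a dense open subset $U ⊆ X°$ with $σ|_U = 0$. Density of $U$ in $X°$ means that $U$ meets every irreducible component of $X°$; in particular, $U$ meets every irreducible component of $X_0 ∩ X°$, since each such component is an irreducible component of $X°$ contained in the fixed irreducible component $X_0$. Hence $U ∩ X_0$ is dense in $X_0 ∩ X°$, and $σ|_{U ∩ X_0} = 0$ forces $σ$ to be a torsion section of $\sF$ on $X_0 ∩ X°$, by the same characterisation applied on the (reduced) scheme $X_0$. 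Torsion-freeness of $\sF$ now gives $σ = 0$.

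The step I expect to be the main obstacle, modest though it is, is checking that density of $U$ in $X°$ transfers to density of $U ∩ X_0$ in $X_0 ∩ X°$. This rests solely on the fact that the irreducible components of $X_0 ∩ X°$ are exactly the irreducible components of $X°$ that happen to lie in $X_0$, which is where irreducibility of $X_0$ enters. Equivalently, working ring-theoretically, if $f ∈ A$ is a non-zerodivisor (i.e.\ $f$ avoids every minimal prime of $A$) then its image $\overline{f} ∈ A/\mathfrak{p}$ avoids every minimal prime of $A/\mathfrak{p}$, so $\overline{f}$ is a non-zerodivisor there; this is the algebraic incarnation of the same point. With that in hand, the argument closes immediately, and coherence of $ι_* \sF$ is standard since $ι$ is a closed immersion.
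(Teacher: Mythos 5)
Your proposal is correct and follows essentially the same route as the paper's proof: reduce to an affine open $X°$, use the characterisation of torsion sections as those vanishing on a dense open subset, and transfer density from $X°$ to $X_0 ∩ X°$ via irreducibility of $X_0$ (the paper phrases this by intersecting $U$ with $X_0 \setminus X_1$, where $X_1$ is the union of the other components, but the content is identical). The only trivial omission is the case $X° ∩ X_0 = \emptyset$, which your identification $(ι_* \sF)(X°) = \sF(X_0 ∩ X°)$ handles implicitly.
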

\begin{proof}
  Let $X° ⊂ X$ be any affine open subset.  If $X°$ is disjoint from $X_0$, then
  $ι_* \sF|_{X°}$ is the zero sheaf, which is torsion-free.

  Now assume that $X° ∩ X_0 \not = \emptyset$, and let
  $$
  σ ∈ (\tor ι_* \sF)(X°) ⊆  (ι_* \sF)(X°)
  $$
  be any section, with associated section $τ ∈ \sF(X° ∩ X_0)$.  We need to show
  that $σ = 0$, or equivalently $τ = 0$.  To this end, let $X_1 ⊂ X$ denote the
  union of all irreducible components different from $X_0$.  Let $U ⊂ X°$ be a
  dense open subset such that $σ|_U = 0$.  The set $U° := U ∩ (X_0 \setminus
  X_1)$ is then open and dense on $X_0$ and $σ|_{U°} = 0$.  In particular,
  $τ|_{U°} = 0$.  Since $\sF$ is torsion-free, this shows that $τ = 0$, as
  claimed.
\end{proof}

\begin{prop}[Injectivity of morphisms]\label{prop:ijtf}
  Let $X$ be a reduced, quasi-projective scheme and $φ : \sF → \sG$ a morphism
  of coherent sheaves of $\sO_X$-modules.  Assume that $\sF$ is torsion-free.
  If $Y \subsetneq X$ is a closed subset such that
  \begin{enumerate}
  \item $Y$ does not contain any irreducible component of $X$, and
  \item the restricted morphism $φ|_{X \setminus Y}$ is injective,
  \end{enumerate}
  then $φ$ is injective.
\end{prop}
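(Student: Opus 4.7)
The plan is to analyse the kernel $\sK := \ker \varphi \subseteq \sF$ and show that it vanishes. As a first step, I would invoke Corollary~\ref{cor:sstf}: since $\sK$ is a subsheaf of the torsion-free sheaf $\sF$, it is itself torsion-free. The problem therefore reduces to showing that a coherent, torsion-free sheaf on $X$ that restricts to zero on $X \setminus Y$ must be zero.

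The crucial geometric fact underlying this reduction is a density statement: for every nonempty open $U \subseteq X$, the set $U \setminus Y$ is dense in $U$. I would prove this by contradiction. Suppose some nonempty open $V \subseteq U$ is contained in $Y$. Since $V$ is a nonempty open subset of the reduced scheme $X$, it meets some irreducible component $X_i$ in a nonempty open, hence dense, subset of $X_i$. The closed set $Y \cap X_i$ then contains the dense subset $V \cap X_i$, forcing $X_i \subseteq Y$ and contradicting hypothesis~(1) that $Y$ contains no irreducible component of $X$.

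With the density fact in hand, I would conclude as follows. Let $U \subseteq X$ be any affine open subset and $\sigma \in \sK(U)$ any section. Hypothesis~(2) that $\varphi|_{X \setminus Y}$ is injective gives $\sigma|_{U \setminus Y} = 0$. Since $U \setminus Y$ is a dense open subset of the affine $U$, Explanation~\ref{expl:torgenpts} identifies $\sigma$ as a section of $\tor \sK$. Torsion-freeness of $\sK$ then forces $\sigma = 0$. As affine opens form a basis of the topology, this yields $\sK = 0$, proving that $\varphi$ is injective.

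I do not anticipate a serious obstacle. The only delicate step is the density claim, which is exactly the place where hypothesis~(1) enters in an essential way. Without that hypothesis the statement would fail: if some irreducible component $X_i$ were contained in $Y$, then one could produce a nonzero torsion-free sheaf supported on $X_i$ (e.g.\ the pushforward of a torsion-free sheaf on $X_i$, which is torsion-free on $X$ by Proposition~\ref{prop:pftf}) that vanishes on $X \setminus Y$, in line with Remark~\ref{rem:tfszoos}.
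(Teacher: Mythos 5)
Your proposal is correct and follows essentially the same route as the paper: the paper's proof likewise takes a section of $\ker φ$ over an affine open, observes that $X° \setminus Y$ is dense there by hypothesis~(1), and concludes from Explanation~\ref{expl:torgenpts} and torsion-freeness that the section vanishes. Your detour through Corollary~\ref{cor:sstf} (torsion-freeness of the kernel) is harmless but unnecessary, since one can apply torsion-freeness of $\sF$ directly; your explicit verification of the density claim is the step the paper leaves as an observation.
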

\begin{proof}
  Assume we are given an affine open set $X° ⊆ X$ and a section $σ ∈ \ker φ$.
  Observe that $U := X° \setminus Y$ is dense in $X°$, and that $σ|_U = 0$.
  Since $\sF$ is torsion-free this implies that $σ = 0$.
\end{proof}

\providecommand{\bysame}{\leavevmode\hbox to3em{\hrulefill}\thinspace}
\providecommand{\MR}{\relax\ifhmode\unskip\space\fi MR }
\providecommand{\MRhref}[2]{%
  \href{http://www.ams.org/mathscinet-getitem?mr=#1}{#2}
}
\providecommand{\href}[2]{#2}

\end{document}